\renewcommand{\leq}{\leqslant}
\renewcommand{\geq}{\geqslant}
\newcommand{\hhh}{{\mathcal{H}}}
\newcommand{\hn}{\mathbb{H}^n}
\newcommand{\g}[1]{g_\varepsilon\left( #1 \right)}
\newcommand{\normale}{N}
\newcommand{\no}{\normale}
\newcommand{\noh}{\normale^\hh}
\newcommand{\n}[1]{\nabla^\varepsilon_{ #1 }}
\newcommand{\gh}[1]{g_\hh\left( #1 \right)}
\newcommand{\vh}{v^\hh}
\newcommand{\rr}{{\mathbb{R}}}
\newcommand{\hh}{{\mathbb{H}}}
\newcommand{\Ru}{\mathbb{R}}
\newcommand{\Om}{\Omega}
\newcommand{\eps}{\varepsilon}
\renewcommand{\v}{v^\eps}
\newcommand{\vt}{\v_{2n+1}}
\newcommand{\scu}{\longrightarrow}
\definecolor{champagne}{rgb}{0.97, 0.91, 0.81}
\definecolor{asparagus}{rgb}{0.53, 0.66, 0.42}
\DeclareMathOperator{\ric}{Ric}
\DeclareMathOperator{\graf}{graph}
\DeclareMathOperator{\divv}{div}
\DeclareMathOperator{\diver}{div}
\DeclareMathOperator{\spann}{span}
\DeclareMathOperator{\rank}{rank}
\newtheorem{theorem}{Theorem}[section]
\newtheorem{proposition}[theorem]{Proposition}
\newtheorem{definition}[theorem]{Definition}
\newtheorem{lemma}[theorem]{Lemma}
\newtheorem{corollary}[theorem]{Corollary}
\newtheorem*{exapp}{Example A1}
\newtheorem*{theoapp}{Theorem A2}
\newtheorem{thm}{Theorem}[section]
\newtheorem{prop}[thm]{Proposition}
\newtheorem{deff}[thm]{Definition}
\newtheorem{lem}[thm]{Lemma}
\newtheorem{cor}[thm]{Corollary}
\theoremstyle{definition}
\newtheorem*{rulingthm}{Ruling Property}
\newtheorem{es}[thm]{Example}
\newtheorem{example}[theorem]{Example}
\theoremstyle{remark}
\numberwithin{equation}{section}
\thanks{Key words. Bernstein problem, Heisenberg group, ruled hypersurface, minimal hypersurface, second fundamental form, totally geodesic.\\
\indent MSC. 53C17, 49Q05}
\title[A characterization of horizontally totally geodesic hypersurfaces]{A characterization of horizontally totally geodesic hypersurfaces in Heisenberg groups}
\author[A.~Pinamonti]{Andrea Pinamonti}
\address[A.~Pinamonti]{Dipartimento di Matematica, Università di Trento, via Sommarive 14, 38123 Povo (TN), Italy}
\email{andrea.pinamonti@unitn.it}
\author[S.~Verzellesi]{Simone Verzellesi}
\address[S.~Verzellesi]{Dipartimento di Matematica, Università di Trento, via Sommarive 14, 38123 Povo (TN), Italy}
\email{simone.verzellesi@unitn.it}
\date{\today}
\begin{document}
\maketitle
\begin{abstract}
    In this paper we achieve a first concrete step towards a better
understanding of the so-called \emph{Bernstein problem} in higher dimensional Heisenberg groups. Indeed, in the sub-Riemannian Heisenberg group $\hn$, with $n\geq 2$, we show that the only entire hypersurfaces with vanishing horizontal symmetric second fundamental form % and countable characteristic set 
are hyperplanes. This result relies on a sub-Riemannian characterization of a higher dimensional ruling property, as well as on the study of sub-Riemannian geodesics on Heisenberg hypersurfaces.
\end{abstract}

\section{Introduction}
In the present paper we achieve a first concrete step towards a better
understanding of the so-called \emph{Bernstein problem} in higher dimensional sub-Riemannian Heisenberg groups. Namely, we reduce the solution of the latter to validity of suitable sub-Riemannian curvature estimates. The characterization of entire minimal hypersurfaces in higher dimensional sub-Riemannian Heisenberg groups is an intriguing open problem in the framework of sub-Riemannian geometry. This issue, which is typically known as \emph{Bernstein problem} in view of its Euclidean counterpart (cf. \cite{Giusti} for a complete survey of the topic), fits into the broader context of minimal hypersurfaces in sub-Riemannian structures (cf. \cite{MR2165405,MR2983199,MR2262784,MR2472175,MR4346009,pmc1,MR4314055,SGR,MR2043961,MR3445204,MR2448649,MR3276118} and references therein). The study of this and related issues is particularly relevant in the sub-Riemannian Heisenberg group $\hn$, 
%, which is $\rr^{2n+1}$ endowed with a suitable group law and a sub-Riemannian structure (cf. \Cref{preliminaries}),
since the latter constitutes a prototypical model in the setting of Carnot groups (cf. \cite{MR0657581,MR2363343}), sub-Riemannian manifolds (cf. \cite{MR3971262}), CR manifolds (cf. \cite{MR2312336}) and Carnot-Carathéodory spaces (cf. \cite{MR1421823}). 
We briefly recall that the $n$-th Heisenberg group $(\hn,\cdot)$ is $\mathbb R^{2n+1}$ endowed with the group law
\begin{equation*}
    p\cdot p'=(\bar x,\bar y,t)\cdot (\bar x',\bar y',t')=(\bar x+\bar x',\bar y+\bar y', t+t'+Q((\bar x,\bar y),(\bar x',\bar y'))),
\end{equation*}
where
\begin{equation*}
    Q((\bar x,\bar y),(\bar x',\bar y'))=\sum_{j=1}^n\left(x_j'y_j-x_jy_j'\right)
\end{equation*}
and where we denoted points $p\in\mathbb R^{2n+1}$ by $p=(z,t)=(\bar x,\bar y,t)=(x_1,\ldots,x_n,y_1,\ldots,y_n,t)$.
With this operation, $\hn$ is a Carnot group, whose associated \emph{horizontal distribution}, which we denote by $\hhh$, is generated by the left-invariant vector fields
\begin{equation*}
    X_j=\frac{\partial}{\partial x_j}+y_j\frac{\partial}{\partial t}\qquad \text{and}\qquad Y_j=\frac{\partial}{\partial x_j}-x_j\frac{\partial}{\partial t}
\end{equation*}
for $j=1,\ldots,n$. The standard sub-Riemannian structure $$(\hh^n,\hhh,\langle\cdot,\cdot\rangle)$$ is given by 
%a smooth $2n$-dimensional distribution $\hhh\subseteq T\hh^n$ and a 
a suitable scalar product $\langle\cdot,\cdot\rangle$ on $\hhh$. 
One of the key differences between the Euclidean and the Heisenberg setting is that, as pointed out in \cite{MR1800768}, the classical Federer's notion of rectifiability in metric spaces (cf. \cite{MR0257325}) is not suitable for the Heisenberg group.
%, since the latter turns out to be purely unrectifiable
To solve this issue, the authors of \cite{MR1871966} introduced the intrinsic notion of \emph{$\mathbb H$-regular hypersurface}, together with a related notion of \emph{intrinsic rectifiability}. We recall that an $\mathbb H$-regular hypersurface is a subset of $\hn$ which can be described locally as the zero locus of a $C^1_{\mathbb H}$-function
%, i.e. a continuous function whose \emph{horizontal gradient} is continuous and locally non-vanishing 
(cf. \cite{MR1871966} for more precise definitions). A special class of $\mathbb H$-regular hypersurfaces is that of \emph{non-characteristic hypersurfaces}. 
In this setting, given a hypersurface $S$ of class $C^1$, we say that a point $p\in S$ is \emph{characteristic} as soon as
\begin{equation*}
    \hhh_p=T_p S,
\end{equation*}
otherwise we say that $p$ is \emph{non-characteristic}. In this last case, the \emph{horizontal tangent space}
\begin{equation*}
    \hhh T_p S=\hhh_p\cap T_p S
\end{equation*}
is a $(2n-1)$-dimensional vector space. The set of characteristic points of $S$ is denoted by $S_0$ and is called the \emph{characteristic set} of $S$.
 After \cite{MR1871966}, it was clear that the importance of $\mathbb H$-regular hypersurfaces went beyond rectifiability, as proved for instance from the striking differences between characteristic and non-characteristic setting in the approach to the sub-Riemannian Bernstein problem in $\mathbb H^1$. A first study of the latter was carried out by \cite{MR2165405,MR2435652} in the class of \emph{$t$-graphs} of class $C^2$. We recall that a hypersurface $S$ is a \emph{$t$-graph} whenever there exists $u:\Ru^{2n}\longrightarrow\Ru$ such that
\begin{equation*}
   \graf (u):=S=\{(\bar x,\bar y,u(\bar x,\bar y))\,:\,(\bar x,\bar y)\in \Ru^{2n}\}.
\end{equation*}
In the previous set of papers, the authors classified minimal $t$-graphs of class $C^2$ in the first Heisenberg group $\mathbb H^1$, finding examples of minimal $t$-graphs which are not planes. These results were generalized in \cite{MR2609016,MR2472175,MR2648078} to more general embedded $C^2$-hypersurfaces in $\mathbb H^1$. Moreover, as pointed out in \cite{MR2448649}, if one consider hypersurfaces with low regularity, the examples of minimal hypersurfaces which are not hyperplanes increase considerably.
%It was evident from the previous works that, unlike in the Euclidean setting, it is impossible to have rigidity for general minimal hypersurfaces. 
However, the situation is different when considering non-characteristic hypersurfaces. In this context, a meaningful counterpart of hyperplanes in the Euclidean setting is the class of \emph{vertical hyperplanes}. Let us recall that a vertical hyperplane is a set $S$ of the form 
\begin{equation*}
    S=\{p\in\hn\,:\,\langle(\bar x,\bar y),(\bar a,\bar b)\rangle=c\},
\end{equation*}
for some $0\neq (\bar a,\bar b)\in \rr^{2n}$ and $c\in\rr$. An easy computation (cf. Section \ref{ruledsection}) shows that $S$ is non-characteristic. Moreover, every hyperplane which is not vertical is characteristic (cf. again Section \ref{ruledsection}). 
A first result in the understanding of minimal non-characteristic hypersufaces was achieved in \cite{MR2333095} in the class of \emph{intrinsic graphs} (cf. \cite{MR2333095}). Indeed, the authors showed that the only minimal intrinsic graphs defined by a $C^2$ function in $\mathbb H^1$ are vertical hyperplanes. This result was generalized in \cite{MR3406514} to the class of non-characteristic minimal $C^1$-hypersurfaces of $\mathbb H^1$, in \cite{MR3984100} to the class of minimal intrinsic graphs defined by an Euclidean Lipschitz function in $\mathbb H^1$, and in \cite{GiR} to the class of $(X,Y)$-Lipschitz surfaces in the sub-Finsler Heisenberg group $\mathbb H^1$. We point out that, as shown in \cite{MR2455341}, weakening the regularity of the defining function allows to find examples of minimal hypersurfaces which are not vertical planes even in the class of intrinsic graphs. While the Bernstein problem is well understood in $\mathbb H^1$, very few results are known in higher dimensions. On one hand, it has recently proved in (\cite{MR4069613}) that there is no rigidity in the class of smooth $t$-graphs in $\mathbb{H}^n$. On the other hand, when $n\geq 5$, there are counterexamples even in the class of smooth intrinsic graphs (cf. \cite{MR2333095,MR2472175}). The Bernstein problem for non-characteristic hypersurfaces is still open when $n=2,3,4$.
In $\mathbb H^1$, a key step consists in understanding that the non-characteristic part $S\setminus S_0$ of an area-stationary surface $S$ is foliated by \emph{horizontal line} segments in the following sense.
\begin{rulingthm}\label{galliritore}{\cite{MR2983199,MR3406514}}
    Let $S$ be an area-stationary surface of class $C^1$ in $\mathbb H^1$. Then, $S$ is foliated by horizontal line segments with endpoints in $S_0$.
    \end{rulingthm}
Here, by horizontal line, we mean an Euclidean line $\gamma$ such that
\begin{equation*}\label{horiz}
    \Dot{\gamma}(t)=\sum_{j=1}^n a_jX_j(\gamma(t))+\sum_{j=1}^n b_jY_j(\gamma(t)),
\end{equation*}
for some $a_1,\ldots,a_n,b_1,\ldots,b_n\in\Ru$ and for any $t\in\mathbb R$. The importance of this ruling property became even more evident in \cite{MR4433085}, where the author showed a Bernstein Theorem in the class of those minimal intrinsic graphs which present the aforementioned ruling property, thus without assuming any a priori regularity on the surfaces. The importance of this merely differential property can be appreciated even by a sub-Riemannian viewpoint. 
Indeed, in analogy with the Riemannian setting, the sub-Riemannian structure of $\hn$ allows to associate to the non-characteristic part of $S$ various notions of \emph{horizontal second fundamental forms}, which act on the horizontal tangent distribution $\hhh T S$. Despite these forms have been introduced and studied by many authors in different ways (cf. \cite{MR2898770,MR2401420,MR2354992,MR3385193,MR4193432,MR2401420}), and differently from the Riemannian framework, it is possible to distinguish a \emph{symmetric} form $\tilde h$ and a \emph{non-symmetric} form $h$. These forms has shown to be important in various settings, for instance to introduce the so-called \emph{horizontal mean curvature} $H$ (cf. \cite{MR2165405,MR2043961,MR2354992}), a suitable notion of \emph{horizontal umbilicity} (cf. \cite{MR3794892}) and for the study of rigid motions (cf. \cite{MR3385193}). In the particular case of $\mathbb H^1$, the vanishing of the form $h$, which coincides both with the symmetric form $\tilde h$ and with the horizontal mean curvature $H$, is equivalent to the aforementioned ruling property. In the higher dimensional case, however, $h$ and $\tilde h$ may differ, although it is in general true that the norm of $\tilde h$ is controlled by the norm of $h$ (cf. \Cref{secfundsection}). \\
%Motivated by this property, the second author of this paper proposed in \cite{ruled} a generalization of the ruling property of $\hh^1$ to higher dimensional Heisenberg groups. Roughly speaking, an hypersurface $S$ of class $C^1$ is ruled if every non-characteristic point $p$ has a neighborhood $U$ such that
%\begin{equation*}
  %  p\cdot\hhh T_p S\cap U\subseteq S.
%\end{equation*}
%The author of \cite{ruled} showed that, unlike in $\hh^1$ (cf. \Cref{ruledsection}), the only entire ruled non-characteristic hypersurfaces in $\hh^n$ are \emph{vertical hyperplanes}, i.e. Euclidean hyperplanes of the form
%\begin{equation*}
 %   \left\{p\in\hn\,:\,\sum_{j=1}^na_jx_j+\sum_{j=1}^nb_jy_j+d=0\right\}
%\end{equation*}
%for some $0\neq (\bar a,\bar b)\in\rr^{2n}$ and some $c\in\rr$. 

The aim of the present paper is twofold. On one hand, we propose a generalization of the ruling property to higher dimensional Heisenberg groups, relating this new notion with the vanishing of the symmetric form $\tilde h$. More precisely, we will call \emph{horizontally totally geodesic} a hypersurface such that $\tilde h\equiv 0$ on its non-characteristic part. We stress that hypersurfaces for which $h\equiv 0$ are particular instances of horizontally totally geodesic hypersurfaces. On the other hand, it is not always the case that horizontally totally geodesic hypersurfaces satisfy $h\equiv 0$ (cf. \Cref{secfundsection}).
In the Riemannian framework, this name is motivated by the fact that every geodesic of a totally geodesic hypersurface is a geodesic of the ambient manifold. This last characterization allows to deduce that, in $\rr^n$, the only totally geodesic hypersurfaces are hyperplanes. The second aim of this paper is to provide an analogous result in the Heisenberg group. We stress that, at least in the non-characteristic case, hypersurfaces with $h\equiv 0$ are easily vertical hyperplanes (cf. \Cref{secfundsection}). Surprisingly, the same phenomenon continues to hold under the weaker requirement $\tilde h\equiv 0$. The main achievements of this paper can be then summarized in the following result.
\begin{theorem}[Main Theorem]\label{mainmain}
    Let $S\subseteq\hh^n$ be a hypersurface without boundary of class $C^2$. The following are equivalent:
    \begin{itemize}
       \item [$(i)$] $S$ is horizontally totally geodesic;
        \item [$(ii)$] $S$ is ruled.
    \end{itemize}
   If in addition $n\geq 2$ and $S$ is (topologically) closed, %and $S_0$ is countable,
   then $(i)$ and $(ii)$ hold if and only if $S$ is a hyperplane.
\end{theorem}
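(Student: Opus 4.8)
The equivalence $(i)\Leftrightarrow(ii)$ is a pointwise differential statement that I would establish first. For $v\in\hhh T_pS$ the quantity $\tilde{h}(v,v)$ equals, up to the positive factor $|\nabla_{\hh}f|$, the second derivative at $0$ of $f\circ\ell_v$, where $f$ is a local defining function of $S$ and $\ell_v$ is the horizontal line issuing from $p$ with velocity $v$ (recall that horizontal lines are precisely the horizontal geodesics of $\hn$). Hence $\tilde{h}\equiv0$ on $S\setminus S_0$ says that every $\ell_v$ has vanishing normal acceleration at each of its points, and integrating this second-order condition along $\ell_v$ by ODE uniqueness keeps $\ell_v$ inside $S$, giving $(i)\Rightarrow(ii)$; differentiating $f\circ\ell_v\equiv0$ twice returns the converse. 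Granting this, I may argue the final statement using whichever of $(i),(ii)$ is convenient. The implication ``hyperplane $\Rightarrow(ii)$'' is then immediate and needs no restriction on $n$: an affine subspace contains every line tangent to it, in particular all the horizontal tangent lines $\ell_v$, so every hyperplane is ruled.

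For the converse assume $n\ge2$, $S$ closed and ruled, and fix a non-characteristic $p$. By $(ii)$ the straight lines $\ell_v$, $v\in\hhh T_pS$, all lie in $S$, and the Euclidean realisations of the vectors of $\hhh T_pS$ span a $(2n-1)$-dimensional subspace $L_p$, so the union of these lines is the affine subspace $A_p:=p+L_p\subseteq S$; closedness forces $A_p$ to be a complete $(2n-1)$-plane. Each $A_p$ is Euclidean-totally-geodesic, so the Euclidean second fundamental form $\mathrm{II}$ of $S$ vanishes on $L_p\times L_p$; equivalently, along $A_p$ the Euclidean Gauss map $N_E$ stays in the fixed $2$-plane $\Pi:=L_p^{\perp}=\mathrm{span}\,(\mathbf{m}_p,N_E(p))$, where $\mathbf{m}_p=(-\bar y,\bar x,1)$ is the Euclidean normal to the horizontal plane $\hhh_p$. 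Measuring $N_E$ by its angle $\theta$ inside $\Pi$, the only entries of $\mathrm{II}$ left to kill are $\mathrm{II}(U,u)=U(\theta)$ for $U\in L_p$, and $\mathrm{II}(u,u)$, where $u$ spans $T_pS\cap\Pi$.

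Here is where $n\ge2$ is indispensable, and where I expect the real work. Since $\tilde{h}\equiv0$, the form $h$ is antisymmetric, and in $\hn$ its antisymmetric part is a function $c$ times the contact symplectic form $\escpr{J\,\cdot\,,\cdot}$ (cf. \Cref{secfundsection}), with $J$ the standard complex structure $JX_j=Y_j$, $JY_j=-X_j$. A short computation then yields the transport rules $\nabla^{\hh}_{J\noh}\noh=0$ and $\nabla^{\hh}_{w}\noh=\tfrac{c}{2}\,Jw$ for $w$ in the $J$-invariant subspace $\mathcal V_p:=(J\noh)^{\perp}\cap\hhh T_pS$, of real dimension $2n-2$. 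The point is that $\mathcal V_p\neq\{0\}$ exactly when $n\ge2$. Choosing a $J$-complex pair $w,Jw\in\mathcal V_p$ and writing the compatibility (Gauss--Codazzi type) identity for $\noh$, while accounting for the position dependence of the contact normal $\mathbf{m}_p$ inside $\Pi$, should produce an overdetermined system whose only solution forces $U(\theta)\equiv0$ and $\mathrm{II}(u,u)\equiv0$, that is $\mathrm{II}\equiv0$ on $S\setminus S_0$. This is precisely the step that must break down for $\hh^1$, where $\mathcal V_p=\{0\}$ and non-planar ruled minimal surfaces do occur; the whole difficulty is to convert the symplectic nondegeneracy available on $\mathcal V_p$ (for $n\ge2$) into the vanishing of the Euclidean shape operator, rather than into the vanishing of $c$, which would wrongly exclude the non-vertical hyperplanes (for which $c\neq0$).

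Finally, with $\mathrm{II}\equiv0$ on the open set $S\setminus S_0$, each connected component lies in an affine hyperplane; since $S$ is $C^2$, without boundary and topologically closed, and $S_0$ is closed with empty interior, $\mathrm{II}$ extends by continuity to vanish on all of $S$, and a connectedness argument patches the pieces into a single hyperplane. This closes the equivalence with $(i)$ and $(ii)$ for $n\ge2$.
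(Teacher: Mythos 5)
There are two genuine gaps, one in each half of your argument.

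First, your implication $(i)\Rightarrow(ii)$. The identity $\tilde h_p(v,v)=|\nabla_\hh f(p)|^{-1}\,\frac{d^2}{ds^2}(f\circ\ell_v)\big|_{s=0}$ is correct, but it only gives \emph{second-order} tangency of $\ell_v$ to $S$ at $p$: setting $g=f\circ\ell_v$, you get $g(0)=g'(0)=g''(0)=0$, which of course does not force $g\equiv 0$ (think of $g(s)=s^3$). To apply the hypothesis $\tilde h\equiv 0$ at a later parameter $s$ you would already need to know that $\ell_v(s)\in S\setminus S_0$ and that $v$ is tangent there --- which is exactly what you are trying to prove, so the ``integration along $\ell_v$'' is circular as stated. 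The way out, and the technical core of the paper, is to produce a curve \emph{inside} $S$ first: one proves local existence and uniqueness for the constrained problem \eqref{geoS} ($\Gamma$ horizontal, $\nabla^S_{\dot\Gamma}\dot\Gamma=0$), which is a priori overdetermined because of the horizontality constraint; this is done by passing to intrinsic $Y_1$-graph coordinates and showing one equation of the system is redundant (\Cref{fireg}, \Cref{auxmain}, \Cref{exungeo}). Only then does $\tilde h\equiv 0$ upgrade $\nabla^S_{\dot\Gamma}\dot\Gamma=0$ to the ambient equation $\nabla_{\dot\Gamma}\dot\Gamma=0$, whence $\Gamma$ is the horizontal line by \Cref{charinhn} and hence $\ell_v\subseteq S$. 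Your proposal contains no substitute for this existence step. (The converse direction, differentiating $f\circ\ell_v\equiv 0$ twice, is fine.)

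Second, your proof that a closed ruled hypersurface with $n\geq 2$ is a hyperplane is a plan rather than a proof: the decisive step is announced as ``should produce an overdetermined system whose only solution forces $\mathrm{II}\equiv 0$'' and is not carried out; you yourself flag that it must be arranged so as not to kill the coefficient $c$ (which is nonzero on non-vertical hyperplanes), and that tension is left unresolved. The paper's route is entirely different and purely affine: it first shows $S_0$ consists of isolated points (so is countable) when $\tilde h\equiv 0$, which is needed before one can extend the rulings indefinitely (\Cref{lemmutile}) --- a point your argument tacitly assumes, since a ruling may a priori terminate at a characteristic point by \Cref{ruled}. It then runs an incidence/dimension count: for $q\in S\cap\hhh_0$ the affine planes $\hhh T_0S$ and $q\cdot\hhh T_qS\cap\hhh_0$ inside $\hhh_0$ have dimensions $2n-1$ and $\geq 2n-2$, so they meet in dimension $\geq 2n-3\geq 1$ precisely when $n\geq 2$ (\Cref{analogodi3punto2}); from this one deduces $S$ is a $t$-graph or a vertical hyperplane (\Cref{tgraf}) and finally identifies $S$ with $\{f(z)+ct=0\}$ (\Cref{rulcar}). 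Your observation that $p\cdot\hhh T_pS$ is a $(2n-1)$-dimensional affine subspace is correct and is also used by the paper, but everything after it in your sketch would need to be supplied.
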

In particular, in the non-characteristic setting, \Cref{mainmain} might constitute an important tool in order to approach the resolution of the Bernstein problem in the higher dimensional case. 
\begin{corollary}\label{corber}
    Let $S\subseteq\hh^n$ be a non-characteristic hypersurface without boundary of class $C^2$. Assume that $n\geq 2$ and that $S$ is (topologically) closed. If $S$ is horizontally totally geodesic, then $S$ is a vertical hyperplane.
\end{corollary}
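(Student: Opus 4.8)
The plan is to deduce \Cref{corber} almost directly from \Cref{mainmain}, the only additional ingredient being the behaviour of the characteristic set under the vertical/non-vertical dichotomy for hyperplanes. First I would check that the hypotheses of the corollary are exactly those required by the second part of \Cref{mainmain}: $S$ is a $C^2$ hypersurface without boundary, it is topologically closed, we assume $n\geq 2$, and it is horizontally totally geodesic. Since $S$ is moreover assumed to be non-characteristic, its characteristic set $S_0$ is empty and the condition $\tilde h\equiv 0$ defining horizontal total geodesy holds on all of $S$. Applying \Cref{mainmain} then already yields that $S$ is a hyperplane.

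The remaining step is to identify which hyperplanes can occur. Here I would invoke the elementary computation carried out in \Cref{ruledsection}, which shows that a vertical hyperplane is non-characteristic whereas every hyperplane that is not vertical is characteristic, i.e. admits at least one characteristic point. Because $S_0=\emptyset$ by hypothesis, $S$ cannot be a non-vertical hyperplane, and we conclude that $S$ is a vertical hyperplane, as claimed.

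I do not expect any genuine obstacle in this argument: all the hard analytic and geometric work is absorbed into \Cref{mainmain}, and the passage from a general hyperplane to a vertical one rests only on the explicit description of characteristic points of affine hyperplanes in $\hn$ established earlier. The one point deserving care is to use the non-characteristic assumption correctly—namely, that it forces $S_0=\emptyset$ globally, so that the conclusion of \Cref{mainmain} can be upgraded to verticality, ruling out the possibility of a characteristic (non-vertical) hyperplane.
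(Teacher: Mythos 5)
Your argument is correct and coincides with the paper's (implicit) derivation of \Cref{corber}: apply the second part of \Cref{mainmain} to conclude that $S$ is a hyperplane, then use the explicit computation of the characteristic set of a non-vertical hyperplane from \Cref{ruledsection} (which has exactly one characteristic point) to rule out the non-vertical case, since $S_0=\emptyset$ by hypothesis. Nothing further is needed.
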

 \Cref{corber} allows to reduce the complexity of the problem to the estimate of the norm of the horizontal second fundamental form $\tilde h$ associated to a minimal hypersurface. We point out that an approach based on curvature estimates for minimal hypersurfaces is already available in $\rr^n$, in view of the celebrated paper \cite{MR0423263}. %\textcolor{red}{Moreover, we point out that if one strengthened the regularity hypothesis on $S$, requiring it to be smooth rather than just $C^2$, similar results could be obtained, with some effort, by combining idea from \cite{MR3504195} and \cite{MR2271950}.}
Our approach to \Cref{mainmain} can be summarized in the following steps.
\subsection*{Introduction of the higher dimensional ruling property}
 The starting point consists in generalizing the ruling property to the higher dimensional case, which is done in \Cref{introruledsection} in two equivalent ways (cf. \Cref{locruldef}, \Cref{ruled} and \Cref{locrulisrul}). After discussing the connection between the characteristic set and this new notion (cf. \Cref{addomestica}), we show that the latter is well-behaved with respect to the intrinsic geometry of $\hn$. Namely, we prove that the class of ruled hypersurfaces is closed under the action of intrinsic dilations (cf. \Cref{dilaruled}), and the action of the so-called \emph{pseudohermitian transformations} (cf. \Cref{pseudoruled}). 
 \subsection*{Rigidity results for ruled hypersurfaces}
 Subsequently, we exploit the ruling property to provide rigidity results for some classes of hypersurfaces. Basically, we show that under some constraints on the size of the characteristic set, the higher dimensional ruling property is more rigid than the corresponding one in $\mathbb H^1$.
 \begin{theorem}\label{rulcar}
    Let $S$ be a hypersurface of class $C^1$. Assume that $n\geq 2$ and that $S$ is closed and without boundary. Assume that $S_0$ is countable and that $S$ is ruled. Then $S$ is a hyperplane.
\end{theorem}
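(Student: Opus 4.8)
The plan is to prove, in two stages, first that $S$ contains a full affine $(2n-1)$-plane through every non-characteristic point, and then that the twisting of the horizontal distribution forces this plane-foliation to be flat, so that $S$ lies in a single affine hyperplane.

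\emph{Step 1 (filling in the horizontal planes).} Fix a non-characteristic $p\in S$, so that $\mathcal H T_pS$ is a $(2n-1)$-dimensional subspace and $\Pi_p:=p+\mathcal H T_pS$ is an affine $(2n-1)$-plane. Since $S_0$ is countable and $p\notin S_0$, for each $q\in S_0$ the line $p+\mathbb R(q-p)$ singles out one direction, so for all but countably many $v\in\mathcal H T_pS$ the full Euclidean line $p+\mathbb R v$ avoids $S_0$. For such $v$ the ruling property produces a maximal horizontal segment of $S$ through $p$ in direction $v$; as $S$ is closed and without boundary, a maximal such segment can terminate only at a point of $S_0$ or run to infinity (if it terminated at a non-characteristic limit point, the limiting tangent direction $v$ would still be horizontal and tangent there, and the ruling property would extend it), so it must be the entire line, which therefore lies in $S$. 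The union of these lines is dense in $\Pi_p$, and closedness of $S$ gives $\Pi_p\subseteq S$. Because $S_0$ is countable, the non-characteristic set is open and dense, so this holds through a dense set of points.

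\emph{Step 2 (the symplectic twisting, where $n\ge2$ enters).} Writing a horizontal vector in coordinates as $(\bar a,\bar b,\textstyle\sum_j(a_jy_j-b_jx_j))$, a direct computation shows that a vector which is horizontal at $p$ remains horizontal at a second point $q$ exactly when $\omega(\pi(q-p),(\bar a,\bar b))=0$, where $\pi$ denotes projection to the $(\bar x,\bar y)$-coordinates and $\omega$ is the standard symplectic form on $\mathbb R^{2n}$. Hence, for $q\in\Pi_p$, the whole of $\mathcal H T_pS$ is horizontal at $q$ (that is, $\mathcal H T_pS\subseteq\mathcal H_q$) if and only if $\pi(q-p)$ lies in the $\omega$-orthogonal complement $U_p^\omega$ of $U_p:=\pi(\mathcal H T_pS)\subseteq\mathbb R^{2n}$. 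Now $\dim U_p=2n-1$, so $\dim U_p^\omega=1$; these coincide only when $n=1$, whereas for $n\ge2$ the set $\{q\in\Pi_p:\pi(q-p)\in U_p^\omega\}$ is a proper, one-dimensional affine subset of $\Pi_p$. For every other $q\in\Pi_p$ we then have $\mathcal H T_pS\not\subseteq\mathcal H_q$, and since $\mathcal H T_qS\subseteq\mathcal H_q$ this forces $\mathcal H T_qS\ne\mathcal H T_pS$: the horizontal tangent planes attached to the points of $\Pi_p$ must genuinely rotate. This is precisely the higher-dimensional rigidity absent in $\mathbb H^1$.

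\emph{Step 3 (flatness and conclusion).} I would combine Step 2 with the second-order geometry of $S$. Since $\Pi_p\subseteq S$, the Euclidean second fundamental form of $S$ vanishes on $\mathcal H T_pS$, so its shape operator has rank at most one at every non-characteristic point. On the open set where the rank equals one, the relative-nullity foliation has flat affine $(2n-1)$-dimensional leaves along which the nullity space is constant; as this nullity space is exactly $\mathcal H T_qS$ and the leaf through $p$ is an open piece of $\Pi_p$, this would force $\mathcal H T_qS=\mathcal H T_pS$ all along $\Pi_p$, contradicting Step 2. Therefore the shape operator vanishes identically on the non-characteristic set, $S$ is Euclidean totally geodesic there, and the continuous Gauss map is locally constant; since a countable set cannot disconnect a manifold of dimension $2n\ge4$, the set $S\setminus S_0$ is connected and the Gauss map is globally constant, so $S$ is contained in an affine hyperplane, and closedness together with the absence of boundary upgrades this to $S$ being exactly a hyperplane.

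The main obstacle is exactly the passage from ``$S$ is swept out by horizontal $(2n-1)$-planes'' to ``$S$ is flat'': one must exclude the developable, rank-one alternative, and the symplectic computation of Step 2 is what rules it out when $n\ge2$. A secondary difficulty is that $S$ is only of class $C^1$, so the shape-operator and relative-nullity arguments of Step 3 are not literally available; I expect this to be handled either by exploiting the extra regularity of $S$ along the ruling planes (the restriction of $S$ to each $\Pi_q$ is affine) or by replacing the second fundamental form with a direct continuity argument for the Gauss map along the rotating family of planes produced in Step 2.
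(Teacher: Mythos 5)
Your Step 1 is correct and coincides with the paper's \Cref{lemmutile} (via \Cref{addomestica}), and the symplectic computation in Step 2 is a valid and rather elegant way of locating where $n\geq 2$ enters. The proof fails in Step 3, precisely at the point you flag as the main obstacle. The inclusion $\Pi_p\subseteq S$ only tells you that the Euclidean second fundamental form $II_q$ at $q\in\Pi_p$ vanishes on the hyperplane $\hhh T_pS$ of $T_qS$ \emph{as a bilinear form}; a symmetric form vanishing on $\ker\ell$ has the shape $\ell\otimes m+m\otimes\ell$, hence rank at most \emph{two}, not one, and its kernel may have codimension two. In fact at a point $q\in\Pi_p$ with $\hhh T_qS\neq\hhh T_pS$ (most points, by your own Step 2, since $q\in\Pi_p$ iff $p\in\Pi_q$) the form vanishes on two distinct hyperplanes, which forces its rank to be $0$ or exactly $2$: the rank-one, relative-nullity alternative you set out to exclude is not the one that actually occurs, and in the rank-two case the nullity leaves are only $(2n-2)$-dimensional, so constancy of the tangent space along them does not contradict the rotation established in Step 2. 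To conclude $II_q=0$ you would need a \emph{third} ruling hyperplane through $q$ with a third distinct direction space (a form $\ell\otimes m+m\otimes\ell$ vanishes, as a quadratic form, only on $\ker\ell\cup\ker m$), and producing it requires an additional incidence and dimension count that is absent from your text. On top of this, all of Step 3 presupposes $C^2$ regularity while $S$ is only $C^1$; you acknowledge this, but neither proposed workaround is carried out.

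For comparison, the paper never leaves the $C^1$, affine-incidence level: after your Step 1 it proves $S\cap\hhh_0=\hhh T_0S$ when $0\in S\setminus S_0$ (\Cref{analogodi3punto2}, where $n\geq 2$ enters through $\dim(\hhh T_0S\cap(q\cdot\hhh T_qS\cap\hhh_0))\geq 2n-3\geq 1$), deduces that $S$ is a $t$-graph or a vertical hyperplane (\Cref{tgraf}), and in the $t$-graph case exhibits the candidate hyperplane $S_c=\{f(z)+ct=0\}$ explicitly and verifies $S\subseteq S_c$ by intersecting ruling planes with $\hhh_0$. If you want to salvage your differential-geometric route, the missing ingredient is the ``three distinct ruling directions through a point'' argument sketched above, together with a genuine replacement for the second fundamental form at $C^1$ regularity.
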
 This result constitutes a first remarkable difference with $\mathbb H^1$, where there are instance of smooth ruled non-characteristic hypersurfaces which are not planes (cf. \Cref{ruledsection}). 
%Another striking difference with the first Heisenberg group can be appreciated studying the ruling property in the class of \emph{intrinsic cones}.
% \begin{theorem}\label{conicidue}
%    Let $n\geq 2$ and let $S$ be a ruled conical closed hypersurface. If $S_0\neq\emptyset$, then $S$ is the horizontal hyperplane $\hhh_0$.
%\end{theorem}
%  
%  As a corollary of the previous characterization, it is easy to provide examples of entire minimal characteristic hypersurfaces which are not ruled.
%  \begin{theorem}\label{controes}
  % Let $n\geq 2$ and let $S:=\graf (u)$, where 
 %      $u(\bar x,\bar y)=\frac{1}{2}x_1^2-\frac{1}{2}y_1^2.$
 %  Then $S$ is a minimal smooth hypersurface which is not ruled.
%\end{theorem}
 \subsection*{Introduction of the horizontal second fundamental forms} In \Cref{secfundsection} we begin building a bridge between the aforementioned result, which is only differential in spirit, with the sub-Riemannian structure of $\mathbb H^n$. To this aim,  we formally introduce the two aforementioned second fundamental forms $\tilde h$ and $h$. highlighting the main differences between $\hh^1$ and higher dimensional Heisenberg groups. Moreover, we prove some simple formulas for the norms of $h$ and $\tilde h$ (cf. \Cref{norofh} and \Cref{normoftildeh}), which allows to relate in a quantitative way the two quantities.
 \subsection*{Ruled if and only if horizontally totally geodesic.} In view of \Cref{rulcar}, the main remaining obstacle to prove \Cref{mainmain} is to show the equivalence between the property of being horizontally totally geodesic and the ruling property. 
 \begin{theorem}\label{main1}
    Let $S$ be a hypersurface without boundary of class $C^2$. Then $S$ is ruled if and only if $S$ is horizontally totally geodesic.
\end{theorem}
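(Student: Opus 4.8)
The plan is to work on the non-characteristic part $S\setminus S_0$ and to build a dictionary between horizontal straight lines in $S$ and the vanishing of $\tilde h$ through the geodesic theory of the ambient Riemannian extension. Fix the left-invariant Riemannian metric making $X_1,\dots,X_n,Y_1,\dots,Y_n,T$ orthonormal (here $T=\partial_t$), and let $\nabla$ be its Levi-Civita connection. The computational engine is the set of structure equations $\nabla_{X_j}X_k=\nabla_{Y_j}Y_k=0$, $\nabla_{X_j}Y_k=-\delta_{jk}T$, $\nabla_{Y_j}X_k=\delta_{jk}T$, whence $\nabla_V T=JV$ for every horizontal $V$, where $J$ is the rotation $JX_j=Y_j$, $JY_j=-X_j$. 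I would record two consequences. First, any horizontal curve $\gamma$ satisfies $\langle\nabla_{\dot\gamma}\dot\gamma,T\rangle=-\langle\dot\gamma,J\dot\gamma\rangle=0$, so its acceleration is again horizontal. Second, a curve with horizontal initial velocity and $\nabla_{\dot\gamma}\dot\gamma=0$ has constant frame coefficients, hence is exactly a horizontal (Euclidean straight) line, and conversely such lines are precisely the horizontal geodesics of $\nabla$.

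Next I would set up the bridge to $\tilde h$. Writing the Riemannian unit normal as $\nu=\cos\theta\,\noh+\sin\theta\,T$ with $\cos\theta\neq0$ off $S_0$, introduce $\eta:=-\sin\theta\,\noh+\cos\theta\,T$, so that $T_pS=\hhh T_pS\oplus\mathbb{R}\eta$ and the ambient space splits as $\hhh T_pS\oplus\mathbb{R}\eta\oplus\mathbb{R}\nu$. For a horizontal curve $\gamma\subseteq S$ with $\dot\gamma\in\hhh T S$, differentiating $\langle\dot\gamma,\noh\rangle=0$ and using the definition and formulas of \Cref{secfundsection} and \Cref{normoftildeh} gives $\langle\nabla_{\dot\gamma}\dot\gamma,\noh\rangle=-\tilde h(\dot\gamma,\dot\gamma)$. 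Combined with the first consequence above, this yields the two identities $\langle\nabla_{\dot\gamma}\dot\gamma,\nu\rangle=-\cos\theta\,\tilde h(\dot\gamma,\dot\gamma)$ and $\langle\nabla_{\dot\gamma}\dot\gamma,\eta\rangle=\sin\theta\,\tilde h(\dot\gamma,\dot\gamma)$. Thus both components of the acceleration transverse to $\hhh T S$ are governed by the single quantity $\tilde h(\dot\gamma,\dot\gamma)$.

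With this in place, the implication \emph{ruled $\Rightarrow$ horizontally totally geodesic} is short: at a non-characteristic $p$ and $V\in\hhh T_pS$, the ruling (cf. \Cref{locruldef}) provides a horizontal line $\gamma\subseteq S$ with $\dot\gamma(0)=V$; being straight, $\nabla_{\dot\gamma}\dot\gamma=0$, so its $\nu$-component gives $\cos\theta(p)\,\tilde h(V,V)=0$, and $\cos\theta(p)\neq0$ forces $\tilde h(V,V)=0$. Since $V\in\hhh T_pS$ is arbitrary and $\tilde h$ is symmetric, polarization yields $\tilde h\equiv0$, i.e.\ $S$ is horizontally totally geodesic.

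The substantive direction is \emph{horizontally totally geodesic $\Rightarrow$ ruled}, and this is where I expect the main obstacle. Given $\tilde h\equiv0$, fix non-characteristic $p$ and $V\in\hhh T_pS$, and let $\gamma$ be the Riemannian geodesic of the induced metric $(S,\langle\cdot,\cdot\rangle|_S)$ with $\gamma(0)=p$, $\dot\gamma(0)=V$ (these always exist, sidestepping the sub-Riemannian geodesic machinery). Set $\psi:=\langle\dot\gamma,\eta\rangle$, which measures the failure of $\dot\gamma$ to stay in $\hhh T S$. A direct computation using $\nabla^S_{\dot\gamma}\dot\gamma=0$, the identity $\nabla_V T=JV$, and $\tilde h\equiv0$ shows that $\psi$ solves a linear homogeneous ODE $\psi'=c(s)\psi$ with continuous $c$; since $\psi(0)=0$, uniqueness gives $\psi\equiv0$, so $\dot\gamma$ remains a horizontal tangent vector along $\gamma$. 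Then the identities of the second paragraph make the $\nu$- and $\eta$-components of $\nabla_{\dot\gamma}\dot\gamma$ vanish, while $\nabla^S_{\dot\gamma}\dot\gamma=0$ kills the $\hhh T S$-component; hence $\nabla_{\dot\gamma}\dot\gamma=0$, and by the second consequence of the first paragraph $\gamma$ is a horizontal line, contained in $S$ and with direction $V$. As $V$ was arbitrary this shows $S$ is locally ruled, and \Cref{locrulisrul} upgrades this to ruled. The delicate points, which I would treat carefully, are precisely the preservation of $\hhh T S$ along the induced geodesics (the $\psi$-ODE, where $\tilde h\equiv0$ is essential to kill the inhomogeneous term), the bookkeeping ensuring the relevant invariant is the symmetric form $\tilde h$ rather than $h$ (via \Cref{secfundsection} and \Cref{norofh}), and the behaviour near the characteristic set $S_0$ when passing from local segments to global rulings.
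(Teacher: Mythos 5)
Your proposal is correct, and while the easy direction (ruled $\Rightarrow$ horizontally totally geodesic) coincides with the paper's argument — a horizontal line in $S$ has vanishing covariant acceleration, so $h_p(w,w)=0$ and polarization gives $\tilde h\equiv 0$ — your treatment of the hard direction is genuinely different. The paper constructs, for each $w\in\hhh T_pS$, a \emph{horizontal} curve in $S$ solving $\nabla^S_{\dot\Gamma}\dot\Gamma=0$ for the tangent pseudohermitian connection: this is done by passing to intrinsic $Y_1$-graph coordinates (\Cref{fireg}), writing the a priori overdetermined system \eqref{geoom}, proving that the equation for the $Y_1$-component is redundant (\Cref{auxmain}), and invoking Picard--Lindel\"of (\Cref{exungeo}); the hypothesis $\tilde h\equiv 0$ then upgrades $\nabla^S_{\dot\Gamma}\dot\Gamma=0$ to $\nabla_{\dot\Gamma}\dot\Gamma=0$ and \Cref{charinhn} identifies $\Gamma$ as a horizontal line. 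You instead take the geodesic of the induced Riemannian metric, which always exists, and recover horizontality \emph{a posteriori}: the component $\psi=\langle\dot\gamma,\eta\rangle$ satisfies a linear homogeneous ODE precisely because $\tilde h\equiv 0$ kills the term $\langle v,\nabla_v\eta\rangle=-\sin\theta\,\tilde h(v,v)$, so $\psi(0)=0$ forces $\psi\equiv 0$. This sidesteps the overdetermination issue and the intrinsic-graph machinery entirely, at the price of working with the Levi-Civita connection rather than the paper's pseudohermitian connection; the discrepancy between the two is a $T$-valued torsion term that is always paired with horizontal vectors in your identities, so it is harmless, but this should be said explicitly since $h$ and $\tilde h$ are defined in \Cref{secfundsection} via the pseudohermitian connection. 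The paper's longer route buys \Cref{exungeo} as a standalone existence-and-uniqueness statement for geodesic-type curves on hypersurfaces. One small point common to both arguments: producing a line segment in each direction $V\in\hhh T_pS$ yields the locally ruled property of \Cref{locruldef} only after a uniform lower bound on the segment length over the compact sphere of directions (continuous dependence of the existence interval on the initial data); the paper elides this as well, so it is not a gap relative to its own standard of detail, and \Cref{locrulisrul} then completes the equivalence.
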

 This result strongly relies on a local existence and uniqueness result for a particular geodesic-type initial value problem on the hypersurface (cf. \Cref{exungeo}). Sub-Riemannian geodesics have been extensively studied in the last years (cf. e.g. \cite{MR1867362,MR2421548,MR3642643,MR4396240,MR3205109} and references therein). Although local existence results for sub-Riemannian geodesics are available (cf. e.g. \cite{MR2421548}), it is not always the case that sub-Riemannian geodesics satisfies the standard geodesic equation
 \begin{equation*}
\nabla_{\Dot\Gamma}\Dot\Gamma=0,
 \end{equation*}
 being $\nabla$ a suitable sub-Riemannian connection (cf. \cite{MR2435652}). Therefore, we devote \Cref{geosection} to the study of the initial value problem associated to this kind of equations on hypersurfaces. The proof of \Cref{exungeo} can be reduced to the study of curves in domains of suitable \emph{intrinsic graphs}, and its main difficulty lies in the fact that that the initial value problem that we need to consider is \emph{a priori} overdetermined. Once \Cref{exungeo} is achieved, we are then in position to prove \Cref{main1}, and so, in view of the previous considerations, to conclude the proof of \Cref{mainmain}.\\

%To conclude this paper, in the appendix, we wish to relate the above-mentioned symmetric horizontal second fundamental form to suitable second variation formulas of the area for smooth non-characteristic hypersurfaces. Despite these formulas are well established in this setting (cf. e.g. \cite{MR2322147,MR3319952}), it is hard to give them a proper geometric interpretation. In \Cref{riemapprox}, we show that the classical sub-Riemannian second variation formula can be retrieved via a suitable and well-known Riemannian approximation scheme (cf. \Cref{preliminaries}).
We point out that, in view of \Cref{main1}, another striking difference with the first Heisenberg group can be appreciated. Indeed, contrarily to what happens in $\hh^1$, it is easy to provide examples of minimal smooth hypersurfaces, at least in the characteristic setting, which are not horizontally totally geodesic. 
\begin{theorem}\label{controes}
   Let $n\geq 2$ and let $S:=\graf (u)$, where 
       $u(\bar x,\bar y)=\frac{1}{2}x_1^2-\frac{1}{2}y_1^2.$
   Then $S$ is a minimal smooth hypersurface which is not horizontally totally geodesic.
\end{theorem}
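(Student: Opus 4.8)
The plan is to verify the two assertions separately: that $S$ is a smooth minimal hypersurface, and that it fails to be horizontally totally geodesic. Since $u$ is a polynomial, $S=\graf(u)$ is automatically a smooth embedded hypersurface without boundary, and it is (topologically) closed in $\hn$ because it is an entire $t$-graph. To compute its horizontal mean curvature I would use the defining function $f(\bar x,\bar y,t)=u(\bar x,\bar y)-t$, for which $X_jf=u_{x_j}-y_j$ and $Y_jf=u_{y_j}+x_j$. Setting $V=(u_{x_1}-y_1,\dots,u_{x_n}-y_n,\,u_{y_1}+x_1,\dots,u_{y_n}+x_n)\in\rr^{2n}$, the components of the horizontal unit normal depend on $(\bar x,\bar y)$ alone, so on the non-characteristic set $S\setminus S_0=\{V\neq0\}$ the horizontal mean curvature reduces to the Euclidean expression $H=\divv_{\rr^{2n}}\!\big(V/|V|\big)$. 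For the given $u$ one finds $V=(x_1-y_1,-y_2,\dots,-y_n,\,x_1-y_1,x_2,\dots,x_n)$, and a direct check yields both $\divv_{\rr^{2n}}V=1-1=0$ and $V\cdot\nabla|V|^2=0$; hence $H\equiv0$ and $S$ is minimal.

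For the second assertion, by \Cref{main1} it suffices to show that $S$ is not ruled, i.e. to exhibit a non-characteristic point $p_0$ and a horizontal tangent vector $v\in\hhh T_{p_0}S$ whose associated horizontal line escapes $S$. A horizontal line through $(\bar x_0,\bar y_0,t_0)$ with horizontal velocity $\sum_ja_jX_j+\sum_jb_jY_j$ is the Euclidean line $x_j(\tau)=x_{j,0}+a_j\tau$, $y_j(\tau)=y_{j,0}+b_j\tau$, $t(\tau)=t_0+\big(\sum_j(a_jy_{j,0}-b_jx_{j,0})\big)\tau$, and tangency to $S$ at $p_0$ is the condition $\langle (a,b),V(p_0)\rangle=0$. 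Along such a line the graph constraint $t(\tau)=u(\bar x(\tau),\bar y(\tau))$ becomes
\begin{equation*}
\Big(\textstyle\sum_j(a_jy_{j,0}-b_jx_{j,0})\Big)\tau=(x_{1,0}a_1-y_{1,0}b_1)\tau+\tfrac12(a_1^2-b_1^2)\tau^2,
\end{equation*}
so the line stays on $S$ only if $a_1^2=b_1^2$. It therefore suffices to produce a tangent direction with $a_1^2\neq b_1^2$.

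This is where $n\geq2$ enters, and it is the crux of the argument. At the point $p_0$ with $x_{1,0}=x_{2,0}=1$ and all other spatial coordinates zero (so that $t_0=\tfrac12$ and $p_0\notin S_0$), the tangency condition reads $a_1+b_1+b_2=0$, which no longer forces $a_1=-b_1$. Choosing $a_1=1$, $b_2=-1$ and all remaining components zero gives a genuine horizontal tangent vector with $a_1^2-b_1^2=1\neq0$; the corresponding line has $t(\tau)=\tfrac12+\tau$, whereas $u(\bar x(\tau),\bar y(\tau))=\tfrac12+\tau+\tfrac12\tau^2$, so it leaves $S$ for every $\tau\neq0$. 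Hence $S$ is not ruled, and by \Cref{main1} it is not horizontally totally geodesic. I would stress that the choice of base point is essential: at points where only $x_1,y_1$ are nonvanishing the tangency condition does force $a_1=-b_1$, hence $a_1^2=b_1^2$, and every tangent horizontal line stays on $S$; the failure of the geodesic property is thus genuinely a higher-dimensional phenomenon, detected only after moving off the $(x_1,y_1)$-plane. The same computation reads off $\tilde h(v,v)\neq0$ from the nonzero second-order coefficient $\tfrac12(a_1^2-b_1^2)$, using the explicit form of $\tilde h$ in \Cref{secfundsection}, which gives an alternative, self-contained verification that $S$ is not horizontally totally geodesic.
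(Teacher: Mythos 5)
Your verification that the horizontal mean curvature vanishes is correct and matches the paper's computation: $\vh$ depends only on $(\bar x,\bar y)$, so $\divv_{\hh}\vh=\divv_{\rr^{2n}}(V/|V|)$, and your identities $\divv_{\rr^{2n}}V=0$ and $V\cdot\nabla|V|^2=0$ do give $H\equiv 0$ on $S\setminus S_0$. However, there is a gap in the jump ``hence $H\equiv 0$ and $S$ is minimal'': in this paper \emph{minimal} means $\hh$-minimal, i.e.\ $S$ is the boundary of an $\mathbb H$-perimeter minimizer, and vanishing horizontal mean curvature does not by itself yield perimeter minimality (in $\hh^1$ there are entire $t$-graphs with $H\equiv 0$ off the characteristic set which are not minimizers). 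This is precisely where the paper invokes the calibration-type results of \cite{MR2262784} (Corollary F), \cite{MR2333095} (Theorem 2.3) and \cite{MR3587666} (Example 5.29), valid for entire $t$-graphs when $n\geq 2$; your argument needs this step (or an equivalent calibration) to establish minimality in the sense the theorem asserts.

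For the second assertion your route is genuinely different from the paper's and is correct. The paper disposes of it in one line by appealing to the full \Cref{mainmain}: $S$ is closed, $n\geq 2$, and $S$ is not a hyperplane, hence not horizontally totally geodesic. You instead use only the equivalence of \Cref{main1} and show directly that $S$ is not locally ruled: the point $p_0$ with $x_1=x_2=1$, $t=\tfrac12$ is non-characteristic, the tangency condition there is $a_1+b_1+b_2=0$, and the choice $a_1=1$, $b_2=-1$ produces a tangent horizontal line with $t(\tau)=\tfrac12+\tau$ while $u=\tfrac12+\tau+\tfrac12\tau^2$, so the line leaves $S$. This is more elementary and more informative than the paper's argument, since it bypasses the rigidity result \Cref{rulcar} entirely and your remark that at points where only $x_1,y_1$ are nonzero every tangent horizontal line \emph{does} stay on $S$ cleanly isolates why the phenomenon requires $n\geq 2$. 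One caveat: the closing aside that one can ``read off $\tilde h(v,v)\neq 0$ from the quadratic coefficient $\tfrac12(a_1^2-b_1^2)$'' is not actually self-contained as stated — identifying that Taylor coefficient with $-\tilde h(v,v)$ (up to normalization) is essentially the content of the geodesic analysis behind \Cref{main1}, not something the norm formulas of \Cref{secfundsection} give you directly — so either justify it via \Cref{charinhn} and the computation in \Cref{geosection}, or drop it and rely on the non-ruledness argument, which suffices.
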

%we can read \Cref{conicidue} and \Cref{controes} from the sub-Riemannian standpoint. Namely, when $n\geq 2$, the only horizontally totally geodesic sufficiently smooth cone is the horizontal hyperplane, and there exist minimal characteristic conical hypersurfaces which are not horizontally totally geodesic. \\

This set of results and considerations both provides a direct way to approach the Bernstein problem via curvature estimates, and highlights once more many interesting differences between $\mathbb H^1$ and higher dimensional Heisenberg groups. According to the authors' hope, it may give a burst in the grasp of such an interesting open problem as the Bernstein problem in this anisotropic setting.

\subsection*{Plan of the paper} In \Cref{preliminaries} we collect some basic preliminaries about the sub-Riemannian Heisenberg group.  In \Cref{introruledsection} we introduce the higher dimensional ruling property and we study some of its properties. In \Cref{ruledsection} we prove \Cref{rulcar}.
%, while in \Cref{conesection} we prove \Cref{conicidue} and \Cref{controes}. 
In \Cref{secfundsection} we recall the main definition and properties of the horizontal forms $h$ and $\tilde h$, and we introduce the notion of horizontally totally geodesic hypersurface. In \Cref{geosection} we introduce the relevant geodesic-type initial value problem (cf. \eqref{geoS}) and we show a local existence and uniqueness result (cf. \Cref{exungeo}). Moreover, we prove \Cref{main1}, \Cref{mainmain} and \Cref{controes}.
\subsection*{Acknowledgements} The authors thank G. Giovannardi, M. Ritoré, F. Serra Cassano, G. Tralli, D. Vittone and R. Young for fruitful conversations about these topics. The authors also thank D. Vittone for suggesting them the proof of \Cref{fireg}.

\section{Preliminaries}\label{preliminaries}
\subsection{The Heisenberg group $\hn$} In the following we denote by $T$ the left-invariant vector field $\frac{\partial}{\partial t}$. In this way $
X_1,\ldots,X_n,Y_1,\ldots,Y_n,T
$ is a basis of left-invariant vector fields. Moreover, the only nontrivial commutation relations are
\begin{equation*}
    [X_j,Y_j]=-[Y_j,X_j]=-2T
\end{equation*}
for any $j=1,\ldots,n$. A vector field which is tangent to $\hhh$ at every point is called \emph{horizontal}. For given $q\in\hn$ and $\lambda>0$, we define the \emph{left-translation} $\tau_q:\hn\longrightarrow\hn$ and the \emph{intrinsic dilation} $\delta_\lambda:\hn\longrightarrow\hn$ by
\begin{equation*}
    \tau_q(p):=q\cdot p\qquad\text{and}\qquad\delta_\lambda(p):=(\lambda z,\lambda^2 t)
\end{equation*}
for any $p=(z,t)\in\hn$ respectively. It is well known that both $\tau_q$ and $\delta_\lambda$ are global diffeomorphisms, and that $\delta_\lambda$ is a Lie group isomorphism. Moreover, we define the \emph{complex structure} $J$ by letting
\begin{equation*}
    J(X_i)=Y_i,\qquad J(Y_i)=-X_i\qquad\text{and}\qquad J(T)=0
\end{equation*}
for any $i=1,\ldots,n$, and extending it by linearity for general vector fields.
%\begin{equation*}
    %J\left(\sum_{j=1}^nA_jX_j+\sum_{j=1}^nA_{n+j}Y_j+A_{2n+1} T\right)=-\sum_{j=1}^nA_{n+j}X_j+\sum_{j=1}^nA_{j}Y_j
%\end{equation*}
%for any $\sum_{j=1}^nA_jX_j+\sum_{j=1}^nA_{n+j}Y_j+A_{2n+1}\eps T\in T\hh^n$.
Given $p\in\hh^n$ we will often identify the vector $\sum_{j=1}^nv_jX_j|_p+v_{n+j}Y_j|_p$ with the point $(v_1,\ldots,v_{2n},0)\in\hh^n$.
The Haar measure of $\hn$ coincides with the $(2n+1)$-dimensional Lebesgue measure~$\mathcal{L}^{2n+1}$.
The homogeneity property $\mathcal{L}^{2n+1}(\delta_\lambda(E))=\lambda^Q\mathcal{L}^{2n+1}(E)$ holds for any measurable set $E\subseteq\hn$, where $Q=2n+2$ is the \emph{homogeneous dimension} of $\hn$ (cf. \cite{MR3587666}).
\subsection{Sub-Riemannian structure on $\hn$}
We let $g$ be the unique Riemannian metric on $\mathbb H^n$ which makes $X_1,\ldots,X_n,Y_1,\ldots,Y_n,T$ orthonormal. For the sake of notational simplicity, we let
\begin{equation*}
    Z_j=X_j,\qquad Z_{n+j}=Y_j\qquad\text{and}\qquad Z_{2n+1}=T
\end{equation*}
for any $j=1,\ldots,n$, and we recall for the sake of completeness that the horizontal distribution $\hhh$ is defined by
\begin{equation*}
    \hhh_p=\spann\{Z_1|_p,\ldots,Z_{2n}|_p\}
\end{equation*}
for any $p\in\hn$. If we restrict $g$ to the horizontal distribution $\hhh$, and we denote this restriction by $\langle\cdot,\cdot\rangle$, then $\hn$ inherits a \emph{sub-Riemannian structure} which realizes it as a \emph{sub-Riemannian manifold.} 
We denote by $|\cdot|$ the norm induced by $\langle\cdot,\cdot\rangle$. Moreover, we denote by $\nabla$ the so-called \emph{pseudohermitian connection} (cf. e.g. \cite{MR4193432}), i.e. the unique \emph{metric connection} (cf. \cite{MR1138207}) with torsion tensor given by
\begin{equation}\label{pseudotorsion}
    \nabla_X Y-\nabla_Y X-[X,Y]=2\langle J(X),Y\rangle T
\end{equation}
for any pair of vector fields $X$ and $Y$.
%\begin{equation*}
 %   \nabla_AB=\nabla^1_AB-g_1(\nabla^1_A B,T)T
%\end{equation*}
%for any $A,B\in \hhh$. It is easy to check that the pseudohermitian connection 
%is $C^\infty$-linear in the first entry and additive in the second entry, meaning that
%\begin{equation*}
%    \nabla_{fA}B=f\nabla AB\qquad\text{and}\qquad\nabla_A(B+C)=\nabla _AB+\nabla_AC
%\end{equation*}
%for any $f\in C^\infty(\hn)$ and any $A,B,C\in H$. Moreover,
%\begin{equation*}
    %\nabla_AfB=f\nabla_AB+\langle A,\nabla_H f\rangle B
%\end{equation*}
%for any $f\in C^\infty(\hn)$ and any $A,B,C\in H$. Therefore $\nabla$
%is an affine connection on $\hhh$. Moreover, $\nabla$ is \emph{compatible with $\langle\cdot,\cdot\rangle$}, meaning that
%\begin{equation}\label{phcomp}
%    A\langle B,C\rangle=\langle\nabla_A B,C\rangle+\langle B,\nabla _AC\rangle
%\end{equation}
%for any $A,B,C\in \hhh$. 
The most relevant feature of $\nabla$ (cf. \cite{MR2898770}) is the following property:
\begin{equation}\label{phflat}
    \nabla_{Z_i}Z_j=0
\end{equation}
for any $i,j=1,\ldots,2n+1$, and so can be seen as a flat connection on $\hh^n$. 
%Finally, notice that 
%\begin{equation}\label{phnotor}
 %   \begin{split}
 %       \nabla_AB-\nabla_BA-[A,B]=-\langle[A,B],T\rangle T=2\langle J(A),B\rangle T
 %   \end{split}
%\end{equation}
%for any $A,B\in \hhh$. %, where we used the fact that $\nabla^1$ is symmetric and the identity
%\begin{equation*}
%    \langle[A,B],T\rangle T\\
%        =2\langle A,J(B)\rangle T
%\end{equation*}
%for any $A;B\in \hhh$.
\subsection{Carnot-Carathéodory structure on $\hn$}
If $\Gamma:[a,b]\scu \hn$ is an absolutely continuous curve, we say that it is \emph{horizontal} whenever
\begin{equation}\label{horiz}
 \Dot\Gamma(t)\in\hhh_{\Gamma(t)}
\end{equation}
for almost every $t\in [a,b]$,
and we say that it is \emph{sub-unit} whenever it is horizontal with $|\Dot\Gamma(t)|=1$ for almost every $t\in[a,b]$.
Moreover, we define 
\begin{equation*}
    d(p,p'):=\inf\{T\,:\,\Gamma:[0,T]\scu \mathbb{H}^n\text{ is sub-unit, $\Gamma(0)=p$ and $\Gamma(T)=p'$}\}
\end{equation*}
which, by the Chow-Rashevskii theorem (cf. \cite{MR0001880}), defines a distance on $\mathbb{H}^n$, called \emph{Carnot-Carathéodory distance}. The metric space $(\hn,d)$ is then a prototype of \emph{Carnot-Carathéodory space} (cf. \cite{MR1421823}).
\subsection{Horizontal perimeter and horizontal gradient}  If $\Om\subseteq\hn$ is open and $E\subseteq \hn$ is measurable with $\chi_E\in L^1_{loc}(\Om)$, we recall (cf. e.g. \cite{MR1871966, MR1404326}) that the \emph{$\mathbb H$-perimeter} of $E$ in $\Om$ is defined by
\begin{equation*}
    P_{\mathbb H}(E,\Om):=\sup\left\{\int_E\divv_{\mathbb H}(\bar\varphi)\,d\mathcal{L}^{2n+1}\,:\,\bar\varphi\in C^1_c(\Om,\mathcal{H}),\,|\bar\varphi|_p\leq 1\text{ for any }p\in\Om\right\},
\end{equation*}
where by $C^1_c(\Om,\hhh)$ we denote the class of compactly supported $C^1$ sections of the horizontal distribution $\hhh$, and $\divv_{\mathbb H}$ is the so-called \emph{horizontal divergence}, defined by 
\begin{equation*}
    \divv_{\mathbb H}\left(\sum_{j=1}^n(\varphi_jX_j+\varphi_{n+j}Y_j)\right):=\sum_{j=1}^n(X_j\varphi_j+ Y_j\varphi_{n+j})
\end{equation*}
for any $\sum_{j=1}^n(\varphi_jX_j+\varphi_{n+j}Y_j)\in C^1(\Om,\hhh)$. Moreover, we say that a set $E$ as above is an \emph{$\mathbb H$-Caccioppoli set} whenever $P_{\mathbb H}(E,\Om)<+\infty$ for any bounded open set $\Om\subseteq\hn$. Finally, we recall (cf. e.g. \cite{MR3587666}) that an $\mathbb H$-Caccioppoli set $E$ is an \emph{$\mathbb H$-perimeter minimizer} whenever
\begin{equation*}
    P_{\mathbb H}(E,\Om)\leq P_{\mathbb H}(F,\Om)
\end{equation*}
for any $\Om \Subset\hn$ and for any $\mathbb H$-Caccioppoli set $F$ such that $E\Delta F\Subset\Om$. The sub-Riemannian structure of $\hn$ allows to define a distributional notion of \emph{horizontal gradient} (cf. \cite{MR1871966}). More precisely, if $f\in L^1_{loc}(\Om)$, we let 
\begin{equation*}
    \langle \nabla_\hh f,\varphi\rangle:=-\int_{\Omega} f \divv_\hh\varphi \,d\mathcal{L}^{2n+1}
\end{equation*}
for any $\varphi\in C^\infty_c(\Om,\rr^{2n})$. When $f$ is continuous and $\nabla_\hh f$ is represented by a continuous vector field, then we say that $f\in C^1_\hh(\Om)$. Moreover, in this case,
\begin{equation*}
    \nabla_\hh f=\sum_{j=1}^nX_jfX_j+\sum_{j=1}^nY_jfY_j.
\end{equation*}
\subsection{Hypersurfaces in $\mathbb H^n$}
We say that $S\subseteq\mathbb H^n$ is an \emph{$\mathbb H$-regular hypersurface} if, for any $p\in S$, there exists an open neighborhood $U$ of $p$ and a function $f\in C^1_{\mathbb H}(U)$ such that
\begin{equation*}
    S\cap U=\{q\in\mathbb H^n\,:\,f(q)=0\}\qquad\text{and}\qquad\nabla_{\mathbb H}f\neq 0\text{ on }U.
\end{equation*}
Here and in the rest of the paper, whether not specified, a hypersurface will be always at least of class $C^1$ and without boundary. If $S$ is a hypersurface of class $C^1$, we define 
\begin{equation*}
    S_0:=\{p\in S: \hhh_p=T_pS\}
\end{equation*}
and we call it the \emph{characteristic set} of $S$. Notice that, since $S$ is of class $C^1$ and $\hhh$ is a smooth distribution, then $S_0$ is closed in $S$. Moreover, let us define
\begin{equation*}
    \hhh T_pS:=\hhh_p\cap T_pS.
\end{equation*}
When $p\in S_0$, then $\dim(\hhh T_pS)=2n$. On the contrary, when $p\in S\setminus S_0$, we have $\dim(\hhh T_pS)=2n-1$. In this case, the \emph{horizontal normal} to $S$ at $p$ is defined by
\begin{equation}\label{hornormnonchar}
    \vh(p):=\frac{N^{\mathbb H}(p)}{|N^{\mathbb H}(p)|_p}
\end{equation}
for any $p\in S\setminus S_0$, where $N_{\mathbb H} (p)$ is the a section of the horizontal bundle defined by
\begin{equation*}
    N^{\mathbb H}(p):=\sum_{j=1}^n(\langle N(p),X_j|_p\rangle_{\rr^{2n+1}}X_j|_p+\sum_{j=1^n}\langle N(p),Y_j|_p\rangle_{\rr^{2n+1}}Y_j|_p,
\end{equation*}
being $N(p)$ the Euclidean unit normal to $S$ at $p$. Roughly speaking, $\vh$ is a unit horizontal vector field which is orthogonal to the horizontal part of the tangent space. It is clear that a hypersurface  of class $C^1$ with empty characteristic set is $\mathbb H$-regular (cf. e.g.  \cite{MR4069613}).
When $S$ is of class $C^2$, then
\begin{equation}\label{propvh1}
    \sum_{h=1}^{2n}\vh_hZ_k(\vh_h)=0
\end{equation}
for any $k=1,\ldots,2n$, where by $\vh$ we mean any $C^2$ extension of $\vh|_S$ in a neighborhood of $S$ such that $|\vh|\equiv 1$. Indeed, \eqref{propvh1} follows by taking derivatives of $|\vh|^2\equiv 1$. There is a particular choice of such an extension which allows to derive further relations. Indeed, if we let $d^\hh$ be the signed Carnot-Carathéodory distance from $S$ with respect to $(Z_1,\ldots,Z_{2n})$, then it is well known (cf. \cite{MR4193432}) that $d^\hh$ inherits the same regularity of $S$ in a neighborhood of any non-characteristic point $p\in S$. Moreover, since $d^\hh$ satisfies the horizontal Eikonal equation in a neighborhood of $S$, meaning that $|\nabla^\hh d^\hh|\equiv 1$ in a neighborood of any non-characteristic point $p\in S$ (cf. \cite{MR4193432}),
then $\vh|_S$ can be extended by letting
\begin{equation}\label{normcondist}
    \vh=\sum_{j=1}^nX_jd^\hh X_j+\sum_{j=1}^nY_jd^\hh Y_j.
\end{equation}
With this particular extension,
\begin{equation}\label{propvh2}
    Z_k(\vh_h)=Z_h(\vh_k)
\end{equation}
for any $h,k=1,\ldots,2n$ such that $|h-k|\neq n$. Moreover,
\begin{equation}\label{propvh3}
    X_k(\vh_{n+k})=Y_k(\vh_k)-2Td^\hh\qquad\text{and}\qquad Y_k(\vh_k)=X_k(\vh_{n+k})+2Td^\hh
\end{equation}
for any $k=1,\ldots,n$. Finally, thanks to \eqref{propvh1}, \eqref{propvh2} and \eqref{propvh3}, we see that
\begin{equation}\label{propvh5}
    \sum_{h=1}^{2n}\vh_hZ_h(\vh_k)=-2T d^\hh J(\vh)_k
\end{equation}
for any $k=1,\ldots,2n$.
Moreover, a simple computation shows that
\begin{equation}\label{propvh4}
  Td^\hh=\frac{\no_{2n+1}}{|\noh|}
\end{equation}
According to \cite{MR2354992}, we define the \emph{horizontal tangential derivatives}
\begin{equation*}
    \delta_i\xi=Z_i\bar\xi-\gh{\nabla_\hh,\bar\xi}\vh_i
\end{equation*}
for any $i=1,\ldots,2n$, where $\xi$ is a $C^1$ function on an open subset of $S$ and $\bar\xi$ is any $C^1$ extension of $\xi$. As customary, the horizontal tangential derivatives do not depend on the chosen extension (cf. \cite{MR2354992}). %Moreover, letting $\delta\xi=\sum_{i=1}^{2n}\delta_i\xi Z_i$, then
%\begin{equation}\label{grads}
%\delta\xi=\nabla_\hh\xi-\gh{\nabla_\hh\xi,\vh}\vh\qquad\text{and}\qquad|\delta\xi|^2=|\nabla_\hh\xi|^2-\left(\gh{\nabla_\hh\xi,\vh}\right)^2.
%\end{equation}
%Accordingly, the \emph{horizontal tangential divergence} is defined by
%\begin{equation*}
    %\divv_\hh^S\varphi=\sum_{h=1}^{2n}\delta_h\varphi_h=\divv_\hh\varphi-\sum_{h=1}^{2n}\gh{\nabla_\hh\varphi_h,\vh}\vh_h
%\end{equation*}
%for any $C^1$ vector field $\varphi=\sum_{h=1}^{2n}\varphi_hZ_h$.
% If $S$ is of class $C^2$ and $U\subseteq S\setminus S_0$ is a domain, then it is well known (cf. \cite{MR2094286}) that $\hhh T U$ is a bracket-generating distribution. Therefore, $U$ endowed with the restriction of $\langle\cdot,\cdot\rangle$ to $U$, which we still denote by $\langle\cdot,\cdot\rangle$, is a sub-Riemannian manifold associated with the distribution $\hhh T U$.
We recall that the \emph{tangent pseudohermitian connection} $\nabla^S$ is defined in the non- characteristic part of $S$ by 
\begin{equation*}
    \nabla^S _X Y=\nabla_XY-\langle\nabla_XY,\vh\rangle\vh
\end{equation*}
for any pair of tangent horizontal vector fields $X$ and $Y$. Notice that $\nabla^S$ is easily a metric connection with respect to the metric $\langle\cdot,\cdot\rangle$ restricted to $\hhh T S$.
%, and satisfies
%\begin{equation*}
 %   \nabla^S_XY-\nabla^S_YX-[X,Y]=2\langle J(X),Y\rangle T-\langle [X,Y],\vh\rangle\vh
%\end{equation*}
%for any pair of tangent vector fields $X$ and $Y$. \color{black}
%Moreover (cf. \cite{MR2094286}) $\nabla^S$ is the unique affine connection on $\hhh T U$ which satisfies these two properties. 
We say that a hypersurface of class $C^1$ is \emph{$\hh$-minimal} whenever it coincides with the boundary of an $\mathbb H$-perimeter minimizer. 
\subsection{Intrinsic graphs} We follow the notation and the approach of \cite{MR2223801}.
Let us denote points $q\in\rr^{2n}$ by $q=(\xi_1,\ldots,\xi_n,\eta_2,\ldots,\eta_n,\tau)=(\bar\xi,\tilde\eta,\tau)$. We wish to identify $\rr^{2n}$ with $\{p\in\hn\,:\,y_1=0\}$. To this aim, we introduce the immersion map $i:\rr^{2n}\longrightarrow\hn$ defined by
\begin{equation*}
    i(\bar\xi,\tilde\eta,\tau)=(\bar\xi,0,\tilde\eta,\tau)
\end{equation*}
for any $(\bar\xi,\tilde\eta,\tau)\in\rr^{2n}$. Moreover, we identify $\rr$ with $\{(\bar 0,y_1,\tilde 0,0)\,:\,y_1\in\rr\}$ by means of the inclusion $j:\rr\longrightarrow\hn$ defined by
\begin{equation*}
    j(y_1)=(\bar 0,y_1,\tilde 0,0)
\end{equation*}
for any $y_1\in\rr$.
The maps $i$ and $j$ are clearly smooth, injective and open. For a given open set $\Om\subseteq\rr^{2n}$ and a function $\varphi:\Om\longrightarrow \rr$, we recall that the \emph{$Y_1$-graph of $\varphi$ on $\Om$} is defined by 
\begin{equation*}
    \graf_{Y_1}(\varphi,\Om)=\{i(w)\cdot j(\varphi(w))\,:\,w\in\Om\}=\{(\bar\xi,\varphi(\bar\xi,\tilde\eta,\tau),\tilde\eta,\tau-\xi_1\varphi(\bar\xi,\tilde\eta,\tau))\,:\,(\bar\xi,\tilde\eta,\tau)\in\Om\}.
\end{equation*}
Moreover, we define its parametrization map $\Psi:\Om\longrightarrow\hh^n$ by
\begin{equation*}
    \Psi(\bar\xi,\tilde\eta,\tau)=(\bar\xi,\varphi(\bar\xi,\tilde\eta,\tau),\tilde\eta,\tau-\xi_1\varphi(\bar\xi,\tilde\eta,\tau))
\end{equation*}
for any $(\bar\xi,\tilde\eta,\tau)\in\Om$. We introduce also the \emph{intrinsic projection map} $\Pi:\hh^n\longrightarrow\rr^{2n}$ by
\begin{equation*}
    \Pi(\bar x,\bar y,t)=(\bar x,\tilde y,t+x_1y_1)
\end{equation*}
for any $(\bar x,\bar y,t)\in\hn$. It is easy to check that
\begin{equation*}
    \Pi(\Psi(q))=q\qquad\text{and}\qquad\Psi(\Pi(p))=p
\end{equation*}
for any $q\in\Om$ and any $p\in \graf_{Y_1}(\varphi,\Om)$.
If $\varphi\in C^1(\Om)$ and $S=\graf_{Y_1}(\varphi,\Om)$, then 
\begin{equation*}
    T_{\Psi(q)}S=\spann\left(\frac{\partial\Psi}{\partial\xi_1}\Big|_{q},\ldots,\frac{\partial\Psi}{\partial\xi_n}\Big|_{q},\frac{\partial\Psi}{\partial\eta_2}\Big|_{q},\ldots,\frac{\partial\Psi}{\partial\eta_n}\Big|_{q},\frac{\partial\Psi}{\partial\tau}\Big|_{q}\right).
\end{equation*}
Letting $D\varphi=(\varphi_{\xi_1},\ldots\varphi_{\xi_n},\varphi_{\eta_2},\ldots,\varphi_{\eta_n},\varphi_\tau)$, an easy computation shows that
\begin{equation*}
  \frac{\partial\Psi}{\partial\xi_1}\Big|_{q}=X_1|_{\Psi(q)}+\varphi_{\xi_1}(q)Y_1|_{\Psi(q)}-2\varphi(q)T|_{\Psi(q)},
  \end{equation*}
\begin{equation*}
\frac{\partial\Psi}{\partial\xi_j}\Big|_{q}=X_j|_{\Psi(q)}+\varphi_{\xi_j}(q)Y_1|_{\Psi(q)}-\eta_jT|_{\Psi(q)},\qquad
%\end{equation*}
%for any $j=2,\ldots,n$,
%\begin{equation*}
\frac{\partial\Psi}{\partial\eta_j}\Big|_{q}=Y_j|_{\Psi(q)}+\varphi_{\eta_j}(q)Y_1|_{\Psi(q)}+\xi_jT|_{\Psi(q)}
\end{equation*}
for any $j=2,\ldots,n$ and
\begin{equation*}
  \frac{\partial\Psi}{\partial\tau}\Big|_{q}=\varphi_{\tau}(q)Y_1|_{\Psi(q)}+T|_{\Psi(q)}.
\end{equation*}
It is easy to check that $(E_1,\ldots,E_n, F_2,\ldots,F_n)$ constitutes a global basis of $\hhh TS$, where
\begin{equation*}
    E_1=X_1+W^\varphi\varphi Y_1,\quad E_j=X_j+\tilde X_j\varphi Y_1\quad\text{and}\quad F_{j}=Y_j+\tilde Y_j\varphi Y_1
\end{equation*}
for any $j=2,\ldots,n$, and where the family of vector fields $\nabla^\varphi=(W^\varphi\varphi,\tilde X_2,\ldots\tilde X_n,\tilde Y_2,\ldots,\tilde Y_n)$ is defined by 
\begin{equation*}
    W^\varphi=\frac{\partial}{\partial \xi_1}+2\varphi \tilde T,\quad \tilde X_j=\frac{\partial}{\partial \xi_j}+\eta_j\tilde T\quad\text{and}\quad \tilde Y_j=\frac{\partial}{\partial \eta_j}-\xi_j\tilde T
\end{equation*}
for any $j=2,\ldots,n$, where we have set $\tilde T=\frac{\partial}{\partial \tau}$. Therefore, a quick computation implies that
\begin{equation}\label{normygraph}
    \vh=W^{-\frac{1}{2}}\left(W^\varphi\varphi X_1+\sum_{j=2}^n\tilde X_j\varphi X_j-Y_1+\sum_{j=2}^n\tilde Y_j\varphi Y_j\right),
\end{equation}
where we have set
\begin{equation*}
    W=1+|\nabla^\varphi\varphi|^2.
\end{equation*}
%Notice that $\vh$ can be smoothly extended to a vector field on the whole $\hn$ by the following construction. Let us define $\tilde\varphi:\hn\longrightarrow\rr$ by
%\begin{equation*}
    %\tilde\varphi(p)=\varphi(\Pi(p))
%\end{equation*}
%for any $p\in \hn$. Notice that \begin{equation*}
    %\tilde\varphi(\Psi(q))=\varphi(q)
%\end{equation*}
%for any $q\in\rr^{2n}$. Moreover, 
%\begin{equation*}
%    X_1\tilde\varphi(\Psi(q))=W^\varphi\varphi (q),\qquad X_j\tilde\varphi(\Psi(q))=\tilde X_j\varphi(q),\qquad Y_1\tilde\varphi\equiv 0,\qquad Y_j\tilde\varphi(\Psi(q))=\tilde Y_j\varphi(q)
%\end{equation*}
%for any $q\in\rr^{2n}$ and any $j=2,\ldots, n$. Hence $\vh$ extends in the obvious way. 
Notice that, since 
\begin{equation*}
    [\tilde X_j,\tilde Y_j]=-2\tilde T
\end{equation*}
for any $j=2,\ldots,n$, then $(\Om,d_\varphi)$ is a \emph{Carnot-Carathéodory space} (cf. \cite{MR1421823}) where $\Om$ is any domain of $\rr^{2n}$ and $d_\varphi$ is the \emph{Carnot-Carathéodory distance} induced by $\nabla^\varphi$.

\section{Higher dimensional ruled hypersurfaces}\label{introruledsection}
As already mentioned, a key step in the study of minimal surfaces in $\hh^1$ consists in showing that the non-characteristic part of an area-stationary surface is foliated by horizontal line segments. This property extends to the higher dimensional case as follows.
\begin{definition}[Local ruling property]\label{locruldef}
    Let $S$ be a hypersurface of class $C^1$.  We say that $S$ is \emph{locally ruled at $p\in S\setminus S_0$} if there exists a neighborhood $U$ of $p$ such that
\begin{equation*}
    p\cdot \hhh T_p S\cap U\subseteq S.
\end{equation*}
Moreover, we say that $S$ is \emph{locally ruled} if it is locally ruled at $p\in 
 S\setminus S_0$ for any $p\in S\setminus S_0$.
\end{definition}
Beside this local definition, we propose a global one, which will be useful in the following. 
\begin{deff}[Global ruling property]\label{ruled}
Let $S$ be a hypersurface of class $C^1$. We say that $S$ is \emph{ruled} if for any $p\in S\setminus S_0$, for any $v\in \hhh T_p S$ and for any $s\geq 0$, the following property holds. If $s$ is maximal with the property that 
\begin{equation*}
    p\cdot\delta_{\tau}(v)\in S
\end{equation*}
for any $\tau\in[0,s]$,
then
\begin{equation*}
    p\cdot\delta_{s}(v)\in S_0.
\end{equation*}
\end{deff}
The previous two definitions are actually equivalent.
\begin{proposition}\label{locrulisrul}
    Let $S$ be a hypersurface of class $C^1$. Then the following are equivalent.
    \begin{itemize}
        \item [$(i)$] $S$ is locally ruled.
        \item [$(ii)$] $S$ is ruled. 
    \end{itemize}
\end{proposition}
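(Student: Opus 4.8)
The plan is to prove the two implications separately, with $(ii)\Rightarrow(i)$ being essentially immediate and $(i)\Rightarrow(ii)$ requiring the genuine work. Throughout I will use the identification, recorded in the preliminaries, of a horizontal tangent vector $v=\sum_{j}(v_j X_j|_p+v_{n+j}Y_j|_p)\in\hhh T_p S$ with the point $(v_1,\dots,v_{2n},0)\in\hn$, so that the curve $\tau\mapsto p\cdot\delta_\tau(v)$ is precisely the horizontal line segment through $p$ in the direction $v$ (here $\delta_\tau(v)=(\tau v_1,\dots,\tau v_{2n},0)$, since the last coordinate of $v$ vanishes). Both definitions speak about the same object, namely the horizontal line $\tau\mapsto p\cdot\delta_\tau(v)$ issuing from $p$ in a horizontal tangent direction, and the coset $p\cdot\hhh T_p S$ is exactly the union of all such lines.

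\smallskip

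\emph{$(ii)\Rightarrow(i)$.} Fix $p\in S\setminus S_0$. Since $S_0$ is closed in $S$, there is a neighborhood $U$ of $p$ with $U\cap S_0=\emptyset$. I claim $p\cdot\hhh T_pS\cap U\subseteq S$. Take any $v\in\hhh T_p S$ and consider $\sigma(\tau)=p\cdot\delta_\tau(v)$. Let $s\ge 0$ be maximal so that $\sigma(\tau)\in S$ for all $\tau\in[0,s]$; by the global ruling property $\sigma(s)\in S_0$. If $\sigma$ ever left $S$ inside the noncharacteristic neighborhood $U$, then $s$ would be finite with $\sigma(s)\in U$, forcing $\sigma(s)\in U\cap S_0=\emptyset$, a contradiction. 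Hence $\sigma$ stays in $S$ throughout its time in $U$; the same argument applies to the $\tau<0$ branch by reversing the direction $v\mapsto -v$. Therefore every point of $p\cdot\hhh T_p S$ lying in $U$ belongs to $S$, which is exactly local ruling at $p$.

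\smallskip

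\emph{$(i)\Rightarrow(ii)$.} This is the substantive direction. Fix $p\in S\setminus S_0$, $v\in\hhh T_p S$, and let $s$ be maximal with $\sigma(\tau):=p\cdot\delta_\tau(v)\in S$ for all $\tau\in[0,s]$; I must show $q:=\sigma(s)\in S_0$. The strategy is a continuation/open–closed argument along the segment. Suppose for contradiction that $q\notin S_0$. The key geometric point is that local ruling forces the horizontal line to stay tangent: at each noncharacteristic point $\sigma(\tau)$ the direction $\dot\sigma(\tau)$ (which is the left-translate of $v$, hence horizontal) must lie in $\hhh T_{\sigma(\tau)}S$, because $\sigma$ locally parametrizes a segment of the ruling coset contained in $S$, so its velocity is tangent to $S$ and horizontal. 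Thus the maximal horizontal line segment emanating in direction $v$ is a single self-consistent ruling: on the relatively open set of parameters where $\sigma(\tau)\in S\setminus S_0$, local ruling at $\sigma(\tau)$ produces a whole neighborhood of the coset $\sigma(\tau)\cdot\hhh T_{\sigma(\tau)}S$ inside $S$, and since $v$ (transported) lies in this horizontal tangent space, the segment extends past $\tau$ while remaining in $S$. Concretely, the set $I=\{\tau\in[0,s]:\sigma(\tau)\in S\setminus S_0\}$ is relatively open in $[0,s]$, and local ruling shows it is also relatively closed in $[0,s]\setminus\{\tau:\sigma(\tau)\in S_0\}$ by extending the ruling across each such point. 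If $q=\sigma(s)\notin S_0$ then $\sigma(s)\in S\setminus S_0$, and applying local ruling at $q$ gives a neighborhood $U$ of $q$ with $q\cdot\hhh T_q S\cap U\subseteq S$; since $\dot\sigma(s)\in\hhh T_q S$ by the tangency just discussed, the segment continues to $\sigma(s+\epsilon)\in S$ for small $\epsilon>0$, contradicting the maximality of $s$. Hence $q\in S_0$, establishing the global ruling property.

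\smallskip

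The main obstacle is the tangency claim: one must verify carefully that along the ruling segment the velocity $\dot\sigma(\tau)$ genuinely lies in the horizontal tangent space $\hhh T_{\sigma(\tau)}S$ at each noncharacteristic point, so that the \emph{same} direction $v$ keeps generating an admissible ruling and the local segments patch together into one maximal horizontal line. This hinges on the left-invariance of the horizontal frame $Z_1,\dots,Z_{2n}$: the coset $p\cdot\hhh T_pS$ is swept out by left-translations of horizontal vectors, so $\dot\sigma(\tau)=(\tau_{\sigma(\tau)})_*\,v$ is horizontal, and its tangency to $S$ follows because $\sigma$ lies in $S$ with $\sigma'(\tau)$ the derivative of a curve constrained to $S$. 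Once this is in hand, the open–closed continuation argument closes the loop. The reversal to negative $\tau$ and the handling of the endpoint at $\tau=0$ (where $p$ itself is noncharacteristic) are routine and symmetric.
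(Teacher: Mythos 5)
Your argument for $(i)\Rightarrow(ii)$ coincides with the paper's: the left-translated direction remains horizontal by left-invariance of $\hhh$ and is tangent to $S$ as the one-sided velocity of a curve lying in $S$, so a non-characteristic maximal endpoint would let local ruling prolong the segment past $s$, contradicting maximality. The surrounding ``open--closed continuation'' discussion is superfluous; a single application of local ruling at the endpoint is all that is needed, exactly as in the paper. For $(ii)\Rightarrow(i)$ you work directly in a neighborhood $U$ of $p$ with $U\cap S_0=\emptyset$, while the paper argues with a sequence $p_h\in p\cdot\hhh T_pS\setminus S$ converging to $p$, producing characteristic points $q_h\to p$ and contradicting the closedness of $S_0$ together with $p\notin S_0$; the substance is the same. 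Two steps you leave implicit deserve a word. First, your inference that a first exit from $S$ ``inside $U$'' forces $\sigma(s)\in U$ requires $U$ to contain the whole segment from $p$ to the offending point; since $\tau\mapsto p\cdot\delta_\tau(v)=(z_p+\tau v,\,t_p+\tau Q(z_p,v))$ is a Euclidean line segment, taking $U$ to be a Euclidean ball fixes this, but it should be said. Second, the maximal $s$ invoked in the definition of ruled need not exist if the segment leaves $S$ through $\overline S\setminus S$; the paper disposes of this case with its remark on the closedness of the manifold boundary, and your argument needs the same observation. Neither point is a genuine obstruction.
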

\begin{proof}
       Assume that $S$ is ruled. Assume by contradiction that there exists $p\in S\setminus S_0$ and a sequence $(p_h)_h\subseteq p\cdot \hhh T_p S\setminus S$ converging to $p$ as $h\to+\infty$. Then, for any $h\in\mathbb N$, there exists $\lambda_h>0$ and $v_h\in \hhh T_p S$ such that $p_h=p\cdot\delta_{\lambda_h}(v_h)$. If, up to a subsequence, for any $h$ there exists $0<\mu_h\leq\lambda_h$ such that $p\cdot\delta_{\mu_h}(v_h)$ belongs to the manifold boundary of $S$, then, being the latter closed, so does $p$, a contradiction with $p\in S$. Therefore, since $p_h\notin S$ and up to a subsequence, we can assume that for any $h$ there exists $s_h\geq 0$ maximal such that $p\cdot\delta_\tau(v_h)\in S$ for any $\tau\in [0,s_h]$. Clearly $s_h\leq\lambda_h$. Therefore, being $S$ ruled, then $q_h:=p\cdot\delta_{s_h}(v_h)\in S_0$. But then, by construction, $(q_h)_h$ converges to $p$ as $h\to+\infty$, and so, being $S_0$ closed, we conclude that $p\in S_0$, a contradiction. On the contrary, assume that $S$ is locally ruled.  
    Assume by contradiction that that there exists $p\in S\setminus S_0$, $w\in\hhh T_p S$ and $s$ maximal with the property that 
\begin{equation}\label{maximaleeee}
    p\cdot\delta_{\tau}(w)\in S
\end{equation}
for any $\tau\in[0,s]$ and \begin{equation*}
        p\cdot(sw,0)\notin S_0.
    \end{equation*}
    Set $\bar p:=p\cdot(sw,0)$.
    Consider the left-invariant vector field
    \begin{equation*}
        W=\sum_{j=1}^{2n}w_jZ_j.
    \end{equation*}
    Recalling that left-translations preserve the horizontal distribution, and being $W$ left invariant, we conclude that 
    \begin{equation*}
        d\tau_{(sw,0)}|_p(W_p)=W|_{\bar p}\in \hhh_{\bar p}.
    \end{equation*}
    Moreover, by construction, $W$ is clearly tangent to $S$ at $\bar p$. We conclude that $w\in \hhh T_{\bar p} S$.
     Since $S$ is locally ruled and $\bar p\in S\setminus S_0$, there exists $\tilde s>0$ such that $\bar p\cdot(\tilde sw,0)\in S$, which implies, recalling the definition of $\bar p$, that
    \begin{equation*}
        \bar p\cdot(\tilde sw,0)=p\cdot(\bar sw,0)\cdot(\tilde sw,0)=p\cdot((\bar s+\tilde s)w,0)\in S,
    \end{equation*}
    a contradiction with \eqref{maximaleeee}.
\end{proof}

\begin{prop}\label{addomestica}
Let $S$ be a ruled hypersurface of class $C^1$. Assume that $S$ is (topologically) closed. Let $p\in S\setminus S_0$ and $v\in\hhh T_p S$ be such that
\begin{equation*}
    \left\{p\cdot\delta_s(v,0)\,:\,s\geq 0\right\}\cap S_0=\emptyset.
\end{equation*}
Then 
\begin{equation*}
     \left\{p\cdot\delta_s(v,0)\,:\,s\geq 0\right\}\subseteq S.
\end{equation*}
In particular, if
\begin{equation*}
    p\cdot \hhh T_p S\cap S_0=\emptyset,
\end{equation*}
then 
\begin{equation}\label{mergestrongruled}
    p\cdot \hhh T_p S\subseteq S.
\end{equation}
\end{prop}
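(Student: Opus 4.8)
The plan is to argue by contradiction, isolating the first parameter at which the ray $\sigma\mapsto p\cdot\delta_\sigma(v,0)$ could leave $S$, and then to apply the ruling property exactly at that parameter. Fix $p\in S\setminus S_0$ and $v\in\hhh T_p S$ as in the statement, and set
\begin{equation*}
    I:=\{\,s\geq 0\,:\,p\cdot\delta_\tau(v,0)\in S\text{ for all }\tau\in[0,s]\,\}.
\end{equation*}
Since the defining condition is inherited by smaller parameters, $I$ is downward closed and contains $0$ (as $p\in S$), so it is an interval of the form $[0,s^*]$, $[0,s^*)$ or $[0,+\infty)$ with $s^*:=\sup I$. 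Suppose, towards a contradiction, that the ray is not entirely contained in $S$; then some $s_0\geq 0$ satisfies $p\cdot\delta_{s_0}(v,0)\notin S$, which forces $s^*\leq s_0<+\infty$.

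The first key point is that the supremum $s^*$ is attained, i.e. $I=[0,s^*]$. Indeed, for every $\tau<s^*$ we have $p\cdot\delta_\tau(v,0)\in S$; since $\sigma\mapsto p\cdot\delta_\sigma(v,0)$ is continuous and $S$ is (topologically) closed, the endpoint $p\cdot\delta_{s^*}(v,0)$ lies in $\overline{S}=S$ as well. Hence $s^*$ is precisely the maximal value of $s$ for which $p\cdot\delta_\tau(v,0)\in S$ holds throughout $[0,s]$, and we may invoke \Cref{ruled} with this maximal $s^*$ to obtain $p\cdot\delta_{s^*}(v,0)\in S_0$. As $s^*\geq 0$, this exhibits a point of the ray $\{p\cdot\delta_s(v,0):s\geq 0\}$ lying in $S_0$, contradicting the standing hypothesis that this ray avoids $S_0$. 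Therefore $s^*=+\infty$, that is $\{p\cdot\delta_s(v,0):s\geq 0\}\subseteq S$, which is the first assertion.

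For the \emph{in particular} statement, assume $p\cdot\hhh T_p S\cap S_0=\emptyset$ and pick $w\in\hhh T_p S$ with $w\neq 0$ (the case $w=0$ gives $p\cdot(w,0)=p\in S$ trivially). Setting $v:=w/|w|\in\hhh T_p S$ and $s:=|w|$, linearity of the subspace $\hhh T_p S$ ensures $\sigma v\in\hhh T_p S$ for all $\sigma\geq 0$, so the whole ray $\{p\cdot\delta_\sigma(v,0):\sigma\geq 0\}=\{p\cdot(\sigma v,0):\sigma\geq 0\}$ is contained in $p\cdot\hhh T_p S$ and hence misses $S_0$. The first part then yields $p\cdot(w,0)=p\cdot\delta_s(v,0)\in S$, and since $w\in\hhh T_p S$ was arbitrary we conclude $p\cdot\hhh T_p S\subseteq S$, establishing \eqref{mergestrongruled}.

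The only delicate point is the interplay between the word ``maximal'' in \Cref{ruled} and the topology of $S$: one must use closedness precisely to guarantee that the supremum $s^*$ of admissible parameters is attained, so that the ruling hypothesis may legitimately be applied at the endpoint $s^*$ rather than merely in its interior. Everything else is routine bookkeeping with the dilation structure and the linearity of the horizontal tangent space.
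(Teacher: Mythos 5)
Your argument is correct and follows essentially the same route as the paper's own proof: argue by contradiction, use the topological closedness of $S$ to show that the supremum of admissible parameters is attained and hence is a genuine maximal $s$ in the sense of \Cref{ruled}, apply the ruling property to land in $S_0$, and contradict the hypothesis that the ray avoids $S_0$. The paper states this more tersely (deferring the endpoint argument to the proof of \Cref{locrulisrul}), but the content is identical, and your derivation of the \emph{in particular} clause from the first assertion matches the paper's one-line reduction.
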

\begin{proof}
Let $p\in S\setminus S_0$ and $v\in\hhh T_p S$ be as in the statement, and assume by contradiction that there exists $\lambda>0$ such that $q=p\cdot\delta_{\lambda}(v)\notin S$. Being $S$ closed, there exists $s\geq 0$ maximal as in \Cref{ruled}. Then we can argue as in the proof of \Cref{locrulisrul} to find $s\geq 0$ such that $p\cdot\delta_s(v)\in S_0$, which is a contradiction. The second claim clearly follows from the first one.
\begin{comment}
       Let $p\in S\setminus S_0$ satisfy $p\cdot \hhh T_pS\cap S_0=\emptyset$, and assume by contradiction that there exists $q\in p\cdot \hhh T_p S\setminus S$. Let $v\in \hhh T_pS$ and $\bar t>0$ be such that $p\cdot\delta_{\bar t}(v)=q$. Notice that, being $S$ ruled, there exists $t_1>0$ such that $p\cdot\delta_{t}(v)\in S$ for any $t\leq t_1$. Assume that $t_1$ is maximal with this property. Then $t_1\leq \bar t$. Moreover, being $S$ of class $C^1$, there exists $t_2>t_1$ such that $p\cdot\delta_{t}(v)\notin S$ for any $t\in (t_1,t_2)$. Therefore, being $S$ ruled, it follows that $p\cdot\delta_{t_1}(v)\in S_0$, which is a contradiction.
\end{comment} 
\end{proof}
\begin{comment}
    \begin{prop}\label{strongprop}
    Let $S$ be a $C^1$-hypersurface. Assume that 
    \begin{equation}\label{strongruled}
    p\cdot \hhh T_p S\subseteq S
\end{equation}
for any $p\in S$. Then $S$ is ruled.
\end{prop}
\begin{proof}
    It suffices to observe that it is impossible to find $p\in S\setminus S_0$, $v\in \hhh T_p S$ and  $s\in\Ru$ as in Definition \ref{ruled}, as this would contradict \eqref{strongruled}. Hence $S$ is trivially ruled. 
\end{proof}
\end{comment}
Notice that, in view of Proposition \ref{addomestica}, the notion of ruled hypersurface becomes much more simpler in the case of non-characteristic hypersurfaces. Indeed, if $S$ is a closed, non-characteristic ruled hypersurface of class $C^1$ and $p\in S$, then clearly $p\cdot \hhh T_p S\cap S_0=\emptyset$. Therefore a closed non-characteristic hypersurface of class $C^1$ is ruled if and only if it satisfies \eqref{mergestrongruled}.
Now let us discuss some instances of ruled hypersurfaces. We begin with the simplest non-characteristic smooth hypersurface.
\begin{es}[Vertical Hyperplanes]
Let $S$ be a vertical hyperplane of the form
\begin{equation*}
    S=\{p\in\hn\,:\,\langle(\bar x,\bar y),(\bar a,\bar b)\rangle=c\}
\end{equation*}
for some $0\neq (\bar a,\bar b)\in \Ru^{2n}$ and $c\in\Ru$. Without loss of generality, we assume that $a_1\neq 0$. It is easy to see that 
\begin{equation*}
    T_p S=\spann\{(a_2,-a_1,0,\ldots,0),(a_3,0,-a_1,0,\ldots,0),\ldots,(b_n,0,\ldots,0,-a_1,0),T\}
\end{equation*}
for any $p\in S$. Notice that $S_0=\emptyset$. We show that $S$ is ruled. Indeed, noticing that $T\in T_p S$ for any $p\in S$, it follows that
\begin{equation*}
    \hhh T_pS=\spann\{Z_2|_p,\ldots,Z_n|_p,W_1|_p,\ldots,W_n|_p\}
\end{equation*}
for any $p\in S$, where
\begin{equation}\label{baseovunque}
    Z_i=a_i X_1-a_1X_i\qquad\text{and}\qquad W_j=b_j X_1-a_1Y_j
\end{equation}
for any $i=2,\ldots,n$ and $j=1,\ldots,n$. Let now $p=(\bar x,\bar y,t)\in S$, and let $w=(\bar x',\bar y',0)\in \hhh T_pS$. Then there exists $\alpha_j,\beta_j\in\Ru$ such that
\begin{equation*}
    w=\left(\sum_{j=2}^n\alpha_j a_j+\sum_{j=1}^n\beta_j b_j,-\alpha_2 a_1,\ldots,-\beta_n a_1,0\right).
\end{equation*}
We conclude noticing that 
\begin{equation*}
    \langle(\bar x',\bar y'),(\bar a,\bar b)\rangle=a_1\sum_{j=2}^n\alpha_ja_j+a_1\sum_{j=1}^n\beta_j b_j-\sum_{j=2}^n\alpha_ja_1a_j-\sum_{j=1}^n\beta_ja_1b_j=0.
\end{equation*}
\end{es}
Next we consider an instance in the characteristic case.
\begin{es}[Horizontal Hyperplane]
    Let $S$ be the horizontal hyperplane $\hhh_0$. Notice that
    \begin{equation*}
        T_p S=\spann\left\{\frac{\partial}{\partial x_1},\ldots,\frac{\partial}{\partial y_n}\right\}=\spann\{X_1-y_1 T,\ldots,X_n-y_nT,Y_1+x_1T,\ldots,Y_n+x_n T\}
    \end{equation*}
    for any $p\in S$. This in particular implies that $S_0=\{0\}$. Therefore, let $p=(\bar x,\bar y,t)\neq 0$, and assume without loss of generality that $y_1\neq 0$. This implies that
    \begin{equation*}
        \hhh T_pS=\spann\{y_2X_1-y_1X_2,\ldots,y_n X_1-y_1X_n,x_1X_1+y_1Y_1,\ldots,x_nX_1+y_1Y_n\}.
    \end{equation*}
    Therefore, let $w=(z,0)\in \hhh T_p S$, and let $\alpha_j,\beta_j\in\Ru$ be such that
    \begin{equation*}
        z=\left(\sum_{j=2}^n\alpha_jy_j+\sum_{j=1}^n\beta_jx_j,-\alpha_2y_1,\ldots,-\alpha_n y_1,\beta_1y_1,\ldots,\beta_ny_1\right).
    \end{equation*}
    Hence it follows that 
    \begin{equation*}
        Q((\bar x,\bar y),z)=y_1\sum_{j=2}^n\alpha_jy_j+y_1\sum_{j=1}^n\beta_jx_j-\sum_{j=2}^n\alpha_jy_1y_j-\sum_{j=1}^n\beta_jy_1x_j=0.
    \end{equation*}
\end{es}
With the next couple of propositions we show that the class of ruled $C^1$-hypersurfaces is closed under the action of left translations and intrinsic dilations.
\begin{prop}\label{traslaruled}
    Let $S$ be a ruled hypersurface of class $C^1$.
    Then $\tau_q(S)$ is ruled for any $q\in\hn$.
\end{prop}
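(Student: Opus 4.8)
The plan is to exploit the fact that $\tau_q$ is a group automorphism preserving the horizontal distribution, so that it carries the ruling lines of $S$ exactly onto those of $\tau_q(S)$. Since being ruled (in the sense of \Cref{ruled}) is a property phrased purely in terms of the group law, the intrinsic dilations, the characteristic set and the horizontal tangent space---all of which behave well under $\tau_q$---the property should transfer with no genuine analytic content.

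First I would record the elementary compatibilities. Since $\tau_q$ is a diffeomorphism, $\tau_q(S)$ is again a hypersurface of class $C^1$, and writing $p'=\tau_q(p)=q\cdot p$ we have $T_{p'}\tau_q(S)=d\tau_q|_p(T_pS)$, while $\hhh_{p'}=d\tau_q|_p(\hhh_p)$ because left translations preserve $\hhh$. Intersecting gives $\hhh T_{p'}\tau_q(S)=d\tau_q|_p(\hhh T_pS)$, and since $d\tau_q|_p$ is a linear isomorphism it follows at once that $p\in S_0$ if and only if $p'\in(\tau_q(S))_0$; that is, $(\tau_q(S))_0=\tau_q(S_0)$. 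The key point, which makes everything align, is the left-invariance of $X_j,Y_j$: if $v=\sum_{j=1}^{2n}v_jZ_j|_p$ is identified with the point $(v_1,\dots,v_{2n},0)$, then $d\tau_q|_p(v)=\sum_{j=1}^{2n}v_jZ_j|_{p'}$ is identified with the \emph{same} point. Thus, under the identification of horizontal vectors with points of $\hn$, the map $d\tau_q|_p$ acts as the identity, and a single coordinate vector $v$ represents an element of $\hhh T_pS$ and of $\hhh T_{p'}\tau_q(S)$ simultaneously.

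With these facts in hand the argument is a direct translation. Given $p'\in\tau_q(S)\setminus(\tau_q(S))_0$, $v\in\hhh T_{p'}\tau_q(S)$ and $s\geq 0$ maximal with $p'\cdot\delta_\tau(v)\in\tau_q(S)$ for all $\tau\in[0,s]$, write $p'=q\cdot p$ with $p\in S\setminus S_0$ and regard $v\in\hhh T_pS$ as above. By associativity of the group law,
\begin{equation*}
    p'\cdot\delta_\tau(v)=q\cdot\big(p\cdot\delta_\tau(v)\big)=\tau_q\big(p\cdot\delta_\tau(v)\big),
\end{equation*}
so $p'\cdot\delta_\tau(v)\in\tau_q(S)$ if and only if $p\cdot\delta_\tau(v)\in S$. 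Hence the very same $s$ is maximal for $S$ at $p$ in the direction $v$, and since $S$ is ruled we obtain $p\cdot\delta_s(v)\in S_0$. Applying $\tau_q$ and using $(\tau_q(S))_0=\tau_q(S_0)$ yields $p'\cdot\delta_s(v)=\tau_q(p\cdot\delta_s(v))\in(\tau_q(S))_0$, which is exactly the ruling condition for $\tau_q(S)$.

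I do not expect a real obstacle here: the entire content is the left-invariance of $\hhh$ together with associativity of the group law, and the only point requiring a moment's care is that the identification of horizontal vectors with points of $\hn$ is compatible with $d\tau_q$, so that one and the same $v$ may legitimately be used on both surfaces.
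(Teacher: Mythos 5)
Your proposal is correct and follows essentially the same route as the paper's own proof: both rest on the identity $d\tau_q|_p(\hhh T_pS)=\hhh T_{\tau_q(p)}\tau_q(S)$, the fact that left-invariance lets the same coordinate vector $v$ be used on both surfaces, the associativity identity $\tau_q(p)\cdot\delta_\tau(v)=\tau_q(p\cdot\delta_\tau(v))$, and $(\tau_q(S))_0=\tau_q(S_0)$. No changes needed.
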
 
\begin{proof}
        Fix $q=(\bar x^q,\bar y^q,t)\in\hn$, define $\tilde S:=\tau_q (S)$ and, given a point $\tilde p\in\tilde S\setminus \tilde S_0$, let $p\in S$ be such that $\tilde p=\tau_q (p)$. Being $\tau_q:S\longrightarrow \tilde S$ a diffeomorphism, then $d\tau_q|_p:T_p S\longrightarrow T_{\tilde{p}}\tilde S$ is an isomorphism. Therefore we have that 
    \begin{equation*}
        d\tau_q|_p(T_pS)=T_{\tilde{p}}\tilde S.
    \end{equation*}
    Moreover, by definition of $\hhh$, it is also the case that
    \begin{equation*}
        d\tau_q|_p(\hhh_p)=\hhh_{\tilde p}.
    \end{equation*}
    Hence we infer that 
    \begin{equation*}
        d\tau_q|_p(\hhh T_p S)=d\tau_q|_p(\hhh_p\cap T_pS)=d\tau_q|_p(\hhh_p)\cap d\tau_q|_p(T_pS)=\hhh_{\tilde{p}}\cap T_{\tilde{p}}\tilde S=\hhh T_{\tilde{p}}\tilde S.
    \end{equation*}
    In particular, notice that $p\in S\setminus S_0$. 
Let $w\in\tilde p\cdot \hhh T_{\tilde p}S$ and assume that there exists $s\geq 0$ maximal with the property that $\tilde p\cdot\delta_\tau(w)\in \tilde S$ for any $\tau\in[0,s].$ We claim that $\tilde p\cdot\delta_s(w)\in \tilde S_0$.
 Let $v=(\bar a,\bar b,0)\in \hhh T_pS$ be such that $d\tau_q|_p(v)=w$.
 By the left-invariance of the horizontal distribution, it follows that $w=(\bar a,\bar b,0)$. Therefore $s$ is maximal with the property that $p\cdot\delta_\tau(v)\in S$ for any $\tau\in[0,s].$ Hence $ p\cdot\delta_s(v)\in S_0$, and so, since 
 \begin{equation*}
     \tilde p\cdot\delta_s(w)=\tilde p\cdot (s\bar a,s\bar b,0)=q\cdot p\cdot(s\bar a,\bar b,0)=q\cdot(p\cdot\delta_s(v))
 \end{equation*}
 and observing that $\tau_q(S_0)=\tilde S_0$, we conclude that $\tilde p\cdot\delta_s(w)\in\tilde S_0$.
\end{proof}
\begin{prop}\label{dilaruled}
        Let $S$ be a ruled hypersurface.
    Then $\delta_\lambda(S)$ is ruled for any $\lambda>0$.
\end{prop}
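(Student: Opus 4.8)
The plan is to adapt the argument of \Cref{traslaruled}, replacing left translations by intrinsic dilations. The two structural facts that made the translation case work were that $\tau_q$ is a group homomorphism and that $d\tau_q$ preserves both the tangent spaces of $S$ and the horizontal distribution $\hhh$. For dilations the homomorphism property holds because $\delta_\lambda$ is a Lie group isomorphism, and $d\delta_\lambda$ preserves $\hhh$ for the same reason; the only genuine difference is that $d\delta_\lambda$ does not fix horizontal vectors but rescales them.

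Concretely, set $\tilde S:=\delta_\lambda(S)$ and fix $\tilde p\in\tilde S\setminus\tilde S_0$, a vector $w\in\hhh T_{\tilde p}\tilde S$, and $s\ge 0$ maximal with $\tilde p\cdot\delta_\tau(w)\in\tilde S$ for every $\tau\in[0,s]$; we must show $\tilde p\cdot\delta_s(w)\in\tilde S_0$. Let $p:=\delta_{1/\lambda}(\tilde p)\in S$. Since $\delta_\lambda$ is a global diffeomorphism and a group isomorphism, $d\delta_\lambda|_p$ maps $T_pS$ isomorphically onto $T_{\tilde p}\tilde S$ and maps $\hhh_p$ onto $\hhh_{\tilde p}$; intersecting gives $d\delta_\lambda|_p(\hhh T_pS)=\hhh T_{\tilde p}\tilde S$, whence $\delta_\lambda(S_0)=\tilde S_0$ and in particular $p\in S\setminus S_0$. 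Let $v\in\hhh T_pS$ be the unique horizontal tangent vector with $d\delta_\lambda|_p(v)=w$.

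The key computation is to identify $v$ and $w$ as points of $\hn$. A direct check gives $(\delta_\lambda)_* Z_j=\lambda Z_j$ for $j=1,\ldots,2n$, so $d\delta_\lambda|_p$ scales the horizontal coordinates of a horizontal vector by $\lambda$; under the identification of horizontal vectors with points this means precisely $w=\delta_\lambda(v)$. Using that $\delta_\lambda$ is a homomorphism and that the dilations form a one-parameter group (so $\delta_\tau\circ\delta_\lambda=\delta_{\tau\lambda}=\delta_\lambda\circ\delta_\tau$), we then obtain, for every $\tau\ge 0$,
\begin{equation*}
\tilde p\cdot\delta_\tau(w)=\delta_\lambda(p)\cdot\delta_\tau(\delta_\lambda(v))=\delta_\lambda(p)\cdot\delta_\lambda(\delta_\tau(v))=\delta_\lambda\big(p\cdot\delta_\tau(v)\big).
\end{equation*}
Since $\delta_\lambda$ is a bijection carrying $S$ onto $\tilde S$, this shows $\tilde p\cdot\delta_\tau(w)\in\tilde S$ if and only if $p\cdot\delta_\tau(v)\in S$; hence the same value $s$ is maximal for the property of \Cref{ruled} relative to $(p,v,S)$. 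As $S$ is ruled we conclude $p\cdot\delta_s(v)\in S_0$, and therefore $\tilde p\cdot\delta_s(w)=\delta_\lambda(p\cdot\delta_s(v))\in\delta_\lambda(S_0)=\tilde S_0$, which is exactly the ruling property for $\tilde S$.

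The argument is essentially formal once the scaling $(\delta_\lambda)_* Z_j=\lambda Z_j$ is in place; the only point requiring a little care, and the main (mild) obstacle, is the bookkeeping reconciling the action of the differential $d\delta_\lambda$ on a horizontal tangent vector $v$ with the action of the map $\delta_\lambda$ on the corresponding point, i.e.\ verifying $w=\delta_\lambda(v)$ under the chosen identification. Everything else follows from the homomorphism and one-parameter-group properties of the dilations.
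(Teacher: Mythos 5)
Your proof is correct and follows essentially the same route as the paper's: reduce to the translation argument, verify $d\delta_\lambda|_p(\hhh T_pS)=\hhh T_{\tilde p}\tilde S$, identify $w=\delta_\lambda(v)$ via the scaling of the horizontal frame (the paper does this by the explicit Jacobian computation, which is the same content as $(\delta_\lambda)_*Z_j=\lambda Z_j$), and then use $\delta_\lambda(p\cdot\delta_\tau(v))=\tilde p\cdot\delta_\tau(w)$ together with $\delta_\lambda(S_0)=\tilde S_0$. No gaps.
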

\begin{proof}
    Fix $\lambda>0$, define $\tilde S:=\delta_\lambda(S)$ and, given a point $\tilde p\in\tilde S\setminus \tilde S_0$, let $p=(\bar x,\bar y,t)\in S$ be such that $\tilde p=\delta_\lambda (p)$. Arguing as in the proof of Proposition \ref{traslaruled}, we get that 
    \begin{equation}\label{tanor}
        d\delta_\lambda|_p(\hhh T_pS)=\hhh T_{\tilde p}\tilde S.
    \end{equation}
    Therefore, again, $p\in S\setminus S_0$.
    Let $w\in\tilde p\cdot \hhh T_{\tilde p}S$ and assume that there exists $s\geq 0$ maximal with the property that $\tilde p\cdot\delta_\tau(w)\in \tilde S$ for any $\tau\in[0,s].$ We claim that $\tilde p\cdot\delta_s(w)\in \tilde S_0$.
    Let $v=(\bar a,\bar b,0)\in \hhh T_p S$ be such that $d\delta_\lambda|_p(v)=w$. We claim that that $w=\delta_\lambda(v)$.
    Indeed, recalling that the Jacobian matrix of $\delta_\lambda$ is a diagonal matrix with diagonal $(\lambda,\ldots,\lambda,\lambda^2)$, then 
 \begin{equation}\label{lambda}
 \begin{split}
       w(f)(q)&=\sum_{j=1}^na_j\frac{\partial (f\circ\delta_\lambda)}{\partial x_j}(p)+\sum_{j=1}^nb_j\frac{\partial (f\circ\delta_\lambda)}{\partial x_j}(p)+\sum_{j=1}^n\left(a_jy_j-b_jx_j\right)T(f\circ\delta_\lambda)(p)\\
       &=\sum_{j=1}^n\lambda a_j\frac{\partial f}{\partial x_j}(\tilde p)+\sum_{j=1}^n\lambda b_j\frac{\partial f}{\partial x_j}(\tilde p)+\sum_{j=1}^n((\lambda a_j)(\lambda y_j)-(\lambda b_j)(\lambda x_j))Tf(\tilde p).       
 \end{split}
 \end{equation}  
    The conclusion then follows as in the previous proof, just noticing that 
    \begin{equation*}
    \delta_{\lambda}(p\cdot\delta_\tau(v))=\delta_\lambda(p)\cdot\delta_\lambda(\delta_\tau(v))=\tilde p\cdot\delta_{\lambda\tau}(v)=\tilde p\cdot\delta_\tau(\delta_\lambda(v))=\tilde p\cdot\delta_\tau(w)
    \end{equation*}
for any $\tau\in\Ru$, and that $\delta_{\lambda}(S_0)=\tilde S_0$.
\end{proof}
In view of Proposition \ref{traslaruled}, we can enlarge the class of examples of ruled hypersurfaces.
\begin{es}[Non-Vertical Hyperplanes]
    We already know that $\hhh_0$ is a characteristic ruled smooth hypersurface. 
    For any fixed $q=(\bar x_q,\bar y_q,t_q)\in\hn$, we know from Proposition \ref{traslaruled} that $\tau_q(\hhh_0)$ is a characteristic ruled smooth hypersurface. 
    Moreover, an easy computation shows that
    \begin{equation*}
        \tau_q(\hhh_0)=\{(\bar x,\bar y,t)\in\hn\,:\,\langle(\bar a,\bar b),(\bar x,\bar y)\rangle+t+d=0\},
    \end{equation*}
    where $(\bar a,\bar b)=(-\bar y_q,\bar x_q)$ and $d=-t_q$. Finally, notice that any hyperplane which is not vertical can be obtained as left-translation of the horizontal hyperplane $\hhh_0$. Hence we conclude that every hyperplane of $\hn$ is ruled, and it is non-characteristic if and only if it is vertical. Finally, notice that we cannot exploit Proposition \ref{dilaruled} to obtain more ruled hypersurfaces, since dilations of hyperplanes are hyperplanes.
\end{es}
To conclude this section, we show that the class of ruled hypersurfaces is closed under the action of the so-called \emph{pseudohermitian transformations} of $\hn$. To introduce this notion, we define the map $\mathcal J:\hn\scu\hn$ by 
\begin{equation*}
    \mathcal J(\bar x,\bar y,t):=(-\bar y,\bar x,t)
\end{equation*}
for any $p=(\bar x,\bar y,t)\in\hn$. The map $\mathcal J$ is a global diffeomorphism which preserves the horizontal distribution, related to the CR structure $J$ by
\begin{equation*}
    d \mathcal J|_{\hhh}=J|_\hhh.
\end{equation*}
A global diffeomorphism $\varphi:\hn\scu \hn$ is said to be a pseudohermitian transformation of $\hn$ if it preserves the horizontal distribution and it commutes with $\mathcal J$, that is
\begin{equation*}
    d\varphi(\hhh)\subseteq \hhh\qquad\text{and}\qquad \varphi\circ \mathcal J=\mathcal J\circ\varphi.
\end{equation*}
Let us begin by considering a special subclass of pseudohermitian transformations. To this aim, let us define the map $\varphi_R:\hn\scu\hn$ by 
\begin{equation}\label{rotaspec}
    \varphi_R(\bar x,\bar y,t):=(R(\bar x,\bar y),t),
\end{equation}
where $R$ is an orthogonal matrix of the form
\begin{equation*}
    R=    \left[
    \begin{array}{cc}
	  A & B \\
	-B & A 
\end{array}
\right],
\end{equation*}
where $A$ and $B$ are real-valued $n\times n$ matrices.
\begin{prop}\label{rotaruled}
    Let $\varphi_R$ be as in \eqref{rotaspec}. Then $\varphi_R$ is a pseudohermitian transformation. Moreover, it holds that
    \begin{equation*}
        d\varphi_R|_p(\bar a,\bar b,0)=(R(\bar a,\bar b),0)
    \end{equation*}
    for any $p\in \hn$ and any $(\bar a,\bar b,0)\in H_p$.
\end{prop}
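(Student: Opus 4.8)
The plan is to treat the three requirements of the definition in turn, deriving the explicit differential formula and the inclusion $d\varphi_R(\hhh)\subseteq\hhh$ from one and the same computation. First, $\varphi_R$ is the linear self-map of $\rr^{2n+1}$ with matrix $\mathrm{diag}(R,1)$; since $R$ is orthogonal it is invertible, so $\varphi_R$ is a global diffeomorphism, and being linear its differential $d\varphi_R|_p$ is represented in the coordinate basis $\partial_{x_1},\ldots,\partial_{y_n},\partial_t$ by the same matrix $\mathrm{diag}(R,1)$ at every $p$. Writing $R(\bar x,\bar y)=(A\bar x+B\bar y,-B\bar x+A\bar y)$, a direct substitution shows that both $\varphi_R\circ\mathcal J$ and $\mathcal J\circ\varphi_R$ send $(\bar x,\bar y,t)$ to $(B\bar x-A\bar y,A\bar x+B\bar y,t)$, so $\varphi_R$ commutes with $\mathcal J$. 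Equivalently, the block form of $R$ is precisely the relation $RJ_0=J_0R$, where $J_0$ denotes the matrix of $\mathcal J$ on the $\rr^{2n}$-factor, that is $(\bar x,\bar y)\mapsto(-\bar y,\bar x)$.

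Next I would establish the formula and the preservation of $\hhh$ simultaneously. By the identification recalled in the preliminaries, a horizontal vector at $p=(\bar x,\bar y,t)$ written as $(\bar a,\bar b,0)$ stands for $\sum_j a_jX_j|_p+\sum_j b_jY_j|_p$, which in the coordinate basis reads
\begin{equation*}
\sum_{j=1}^n a_j\frac{\partial}{\partial x_j}+\sum_{j=1}^n b_j\frac{\partial}{\partial y_j}+\Big(\sum_{j=1}^n a_jy_j-\sum_{j=1}^n b_jx_j\Big)\frac{\partial}{\partial t}.
\end{equation*}
Applying $\mathrm{diag}(R,1)$ leaves the $\partial_t$-component unchanged and sends $(\bar a,\bar b)$ to $(\bar a',\bar b'):=R(\bar a,\bar b)$. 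Comparing with the coordinate expression, at $\varphi_R(p)=(\bar x',\bar y',t)$ with $(\bar x',\bar y')=R(\bar x,\bar y)$, of the horizontal vector identified with $(\bar a',\bar b',0)$, the image $d\varphi_R|_p(\bar a,\bar b,0)$ is horizontal and equal to $(R(\bar a,\bar b),0)$ exactly when the two $\partial_t$-coefficients agree, namely when
\begin{equation*}
\sum_{j=1}^n\big(a_jy_j-b_jx_j\big)=\sum_{j=1}^n\big(a'_jy'_j-b'_jx'_j\big).
\end{equation*}

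The whole proposition therefore reduces to this last identity, which asserts that the skew-symmetric form $\sigma(u,w):=\escpr{J_0u,w}$ on $\rr^{2n}$, with $\escpr{\cdot,\cdot}$ the Euclidean scalar product, is invariant under $R$. This is the only step needing the hypotheses on $R$, and it is where both of them enter at once: since $R$ commutes with $J_0$ and is orthogonal,
\begin{equation*}
\sigma(Ru,Rw)=\escpr{J_0Ru,Rw}=\escpr{RJ_0u,Rw}=\escpr{J_0u,w}=\sigma(u,w).
\end{equation*}
Taking $u=(\bar a,\bar b)$ and $w=(\bar x,\bar y)$ gives the required equality of $\partial_t$-coefficients, hence both the formula $d\varphi_R|_p(\bar a,\bar b,0)=(R(\bar a,\bar b),0)$ and the inclusion $d\varphi_R(\hhh)\subseteq\hhh$; together with the first paragraph this yields that $\varphi_R$ is a pseudohermitian transformation. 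I do not expect any genuine obstacle: the only point deserving care is recognizing that the special block structure of $R$ is exactly what renders $\sigma$ invariant once orthogonality is invoked.
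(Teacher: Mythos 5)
Your proof is correct, and it takes a genuinely different route from the paper's. The paper proves the formula by pushing forward the frame vectors $X_j|_p$ and $Y_j|_p$ one at a time: it applies $d\varphi_R|_p(X_j|_p)$ to a test function, uses $(\bar x,\bar y)=R^T(\bar{\tilde x},\bar{\tilde y})$ together with the block structure of $R$ to re-express $y_j$ in the target coordinates, and reassembles the result as $\sum_k(R_{kj}X_k+R_{(n+k)j}Y_k)$ before concluding by linearity. You instead write the whole horizontal vector $(\bar a,\bar b,0)$ in the coordinate basis, observe that $d\varphi_R=\mathrm{diag}(R,1)$ fixes the $\partial_t$-component, and reduce everything to the single identity $\sum_j(a_jy_j-b_jx_j)=\sum_j(a_j'y_j'-b_j'x_j')$, i.e.\ the invariance of the standard symplectic form $\sigma(u,w)=\escpr{J_0u,w}$ under any orthogonal $R$ commuting with $J_0$. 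This is cleaner and more conceptual: it isolates exactly where the two hypotheses on $R$ (orthogonality and the block shape, i.e.\ $R\in U(n)$) enter, whereas the paper's entrywise computation obscures this. You also explicitly verify the commutation $\varphi_R\circ\mathcal J=\mathcal J\circ\varphi_R$, which the paper's proof does not actually spell out even though it is part of the statement; on that point your write-up is more complete than the original.
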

\begin{proof}
    Let $p=(\bar x,\bar y, t)$ and $(\bar a,\bar b,0)$ as in the statement, and let $\tilde p:=\varphi_R(p)=(\bar{\tilde x},\bar{\tilde y},t)$. We first claim that
    \begin{equation*}
d\varphi_R|_p(X_j|_p)=\sum_{k=1}^n\left(R_{kj}X_k|_{\tilde p}+R_{(n+k)j}Y_k|_{\tilde p}\right)
    \end{equation*}
    and
    \begin{equation*}
d\varphi_R|_p(Y_j|_p)=\sum_{k=1}^n\left(R_{k(n+j)}X_k|_{\tilde p}+R_{n+k)(n+j)}Y_k|_{\tilde p}\right)
    \end{equation*}
for any $j=1,\ldots,n$. Indeed, let $\psi$ be a $C^1$ function defined in a neighborhood of $\tilde p$. Let us recall that, since $(\bar{\tilde x},\bar{\tilde y})=R(\bar x,\bar y)$ and $R$ is orthogonal, then $(\bar x,\bar y)=R^T(\bar{\tilde x},\bar{\tilde y})$, which means, recalling also the special block shape of $R$, that 
\begin{equation*}
    -x_j=\sum_{k=1}^n\left(-R_{kj}\tilde x_k-R_{(n+k)j}\tilde y_k\right)
    =\sum_{k=1}^n\left(-R_{(n+k)(n+j)}\tilde x_k+R_{k(n+j)}\tilde y_k\right)
\end{equation*}
and
\begin{equation*}
    y_j=\sum_{k=1}^n\left(R_{k(n+j)}\tilde x_k+R_{(n+k)(n+j)}\tilde y_k\right)=\sum_{k=1}^n\left(-R_{(n+k)j}\tilde x_k+R_{kj}\tilde y_k\right).
\end{equation*}
for any $j=1,\ldots,n$. Then it holds that
\begin{equation*}
    \begin{split}
        d\varphi_R|_p&(X_j|_p)(\psi)(\tilde p)= X_j|_p(\psi\circ\varphi_R)(p)\\
        &=\frac{\partial}{\partial x_j}(\psi\circ\varphi_R)(p)+y_j T(\psi\circ\varphi_R)(p)\\
&=\sum_{k=1}^n\left(R_{kj}\frac{\partial\psi}{\partial x_k}(\tilde p)+R_{(n+k)j}\frac{\partial\psi}{\partial y_k}(\tilde p) \right)+y_j T(\psi)(\tilde p)\\
&=\sum_{k=1}^n\left(R_{kj}\left(\frac{\partial\psi}{\partial x_k}(\tilde p)+\tilde y_k T(\psi(\tilde p)\right)+R_{(n+k)j}\left(\frac{\partial\psi}{\partial y_k}(\tilde p) -\tilde x_k T(\psi)(\tilde p)\right)\right)\\
& =\sum_{k=1}^n\left(R_{kj}X_k|_{\tilde p}(\psi)(\tilde p)+R_{(n+k)j}Y_k|_{\tilde p}(\psi)(\tilde p)\right)
    \end{split}
\end{equation*}
and, similarly, 
\begin{comment}
    \begin{equation*}
    \begin{split}
                d\varphi_R|_p&(Y_j|_p)(\psi)(\tilde p)= Y_j|_p(\psi\circ\varphi_R)(p)\\
        &=\frac{\partial}{\partial y_j}(\psi\circ\varphi_R)(p)-x_j T(\psi\circ\varphi_R)(p)\\
&=\sum_{k=1}^n\left(R_{k(n+j)}\frac{\partial\psi}{\partial x_k}(\tilde p)+R_{(n+k)(n+j)}\frac{\partial\psi}{\partial y_k}(\tilde p) \right)-x_j T(\psi)(\tilde p)\\
&=\sum_{k=1}^n\left(R_{k(n+j)}\left(\frac{\partial\psi}{\partial x_k}(\tilde p)+\tilde y_k T(\psi(\tilde p)\right)+R_{(n+k)(n+j)}\left(\frac{\partial\psi}{\partial y_k}(\tilde p) -\tilde x_k T(\psi)(\tilde p)\right)\right)\\
&=\sum_{k=1}^n\left(R_{k(n+j)}X_k|_{\tilde p}(\psi)(\tilde p)+R_{(n+k)(n+j)}Y_k|_{\tilde p}(\psi)(\tilde p)\right).
    \end{split}
\end{equation*}
\end{comment}
\begin{equation*}
    d\varphi_R|_p(Y_j|_p)(\psi)(\tilde p)=\sum_{k=1}^n\left(R_{k(n+j)}X_k|_{\tilde p}(\psi)(\tilde p)+R_{(n+k)(n+j)}Y_k|_{\tilde p}(\psi)(\tilde p)\right)
\end{equation*}
for any $j=1,\ldots, n$. Hence we conclude that 
\begin{equation*}
\begin{split}
    d\varphi_R|_p&(\bar a,\bar b,0)=\sum_{j=1}^n\left(a_jd\varphi_R|_p(X_j|_p)+b_jd\varphi_R|_p(Y_j|_p)\right)\\
&=\sum_{j,k=1}^n\left(a_j\left(R_{kj}X_k|_{\tilde p}+R_{(n+k)j}Y_k|_{\tilde p}\right)+b_j\left(R_{k(n+j)}X_k|_{\tilde p}+R_{n+k)(n+j)}Y_k|_{\tilde p}\right)\right)\\
&=\sum_{k=1}^n\left(\sum_{j=1}^n\left(R_{kj}a_j+R_{k(n+j)}b_j\right)X_k|_{\tilde p}+\sum_{j=1}^n\left(R_{(n+k)j}a_j+R_{(n+k)(n+j)}b_j\right)Y_k|_{\tilde p}\right)\\
&=(R(\bar a,\bar b),0).
\end{split}
\end{equation*}
\end{proof}
As a consequence of the previous result, it is easy to see that the class of ruled hypersurfaces is closed under the action of maps of the form \eqref{rotaspec}.
\begin{prop}\label{rotaruleddue}
    Let $S$ be a ruled hypersurface. Then $\varphi_R(S)$ is ruled for any $\varphi_R$ as in \eqref{rotaspec}.
\end{prop}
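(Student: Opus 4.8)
The plan is to transcribe, almost verbatim, the template already used for left translations (\Cref{traslaruled}) and intrinsic dilations (\Cref{dilaruled}), substituting for the structural properties of $\tau_q$ and $\delta_\lambda$ the corresponding ones for $\varphi_R$. Two algebraic facts about $\varphi_R$ drive the whole argument, and establishing them is the only genuinely new content. First, $\varphi_R$ is a group automorphism of $\hn$: since $\varphi_R$ is a pseudohermitian transformation by \Cref{rotaruled}, the commutation $\varphi_R\circ\mathcal J=\mathcal J\circ\varphi_R$ forces $R$ to commute with the standard complex structure $J_0$ on $\rr^{2n}$, and, together with the orthogonality of $R$, this gives the invariance $Q(Rz,Rz')=Q(z,z')$ for all $z,z'\in\rr^{2n}$; consequently $\varphi_R(p\cdot p')=\varphi_R(p)\cdot\varphi_R(p')$. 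Second, $\varphi_R$ commutes with intrinsic dilations, i.e. $\varphi_R\circ\delta_\lambda=\delta_\lambda\circ\varphi_R$, which is immediate from $\varphi_R(\lambda z,\lambda^2 t)=(\lambda Rz,\lambda^2 t)$.

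With these at hand, I would fix $\varphi_R$, set $\tilde S:=\varphi_R(S)$, and choose $\tilde p\in\tilde S\setminus\tilde S_0$ with $p:=\varphi_R^{-1}(\tilde p)\in S$. Exactly as in the proof of \Cref{traslaruled}, the facts that $\varphi_R$ is a diffeomorphism with $d\varphi_R|_p(T_pS)=T_{\tilde p}\tilde S$ and that it preserves $\hhh$ yield
\[
 d\varphi_R|_p(\hhh T_pS)=\hhh T_{\tilde p}\tilde S,
\]
so in particular $p\in S\setminus S_0$ and $\varphi_R(S_0)=\tilde S_0$. Given $w\in\hhh T_{\tilde p}\tilde S$ and $s\geq 0$ maximal with $\tilde p\cdot\delta_\tau(w)\in\tilde S$ for all $\tau\in[0,s]$, I would take $v=(\bar a,\bar b,0)\in\hhh T_pS$ with $d\varphi_R|_p(v)=w$; by \Cref{rotaruled} this preimage satisfies $w=(R(\bar a,\bar b),0)=\varphi_R(v)$, where $v$ is read as a point via the usual identification.

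The key computation then chains the two structural facts: for every $\tau\in\Ru$,
\[
 \varphi_R(p\cdot\delta_\tau(v))=\varphi_R(p)\cdot\varphi_R(\delta_\tau(v))=\tilde p\cdot\delta_\tau(\varphi_R(v))=\tilde p\cdot\delta_\tau(w).
\]
Since $\varphi_R$ is a bijection, this shows $p\cdot\delta_\tau(v)\in S$ if and only if $\tilde p\cdot\delta_\tau(w)\in\tilde S$; hence $s$ is also maximal with the property that $p\cdot\delta_\tau(v)\in S$ for $\tau\in[0,s]$. As $S$ is ruled (\Cref{ruled}), we obtain $p\cdot\delta_s(v)\in S_0$, and applying $\varphi_R$ together with $\varphi_R(S_0)=\tilde S_0$ gives $\tilde p\cdot\delta_s(w)=\varphi_R(p\cdot\delta_s(v))\in\tilde S_0$, which is precisely the ruling condition for $\tilde S$.

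I expect the only real obstacle to be the verification that $\varphi_R$ is a group homomorphism, that is, the identity $Q(Rz,Rz')=Q(z,z')$; everything else is a direct copy of the two earlier proofs. This invariance is exactly where the special block form of $R$ in \eqref{rotaspec} (equivalently, $R J_0=J_0 R$) is essential, and it is the reason the pseudohermitian structure of $\varphi_R$ cannot be dispensed with.
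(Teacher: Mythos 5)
Your proposal is correct and follows essentially the same route as the paper: the paper's proof is exactly the one-line reduction to \Cref{traslaruled} and \Cref{dilaruled} via the displayed identity $\varphi_R(p\cdot\delta_s(v,0))=\varphi_R(p)\cdot\delta_s(Rv,0)$, whose third equality silently uses the invariance $Q(Rz,Rv)=Q(z,v)$ that you isolate and justify. Your decomposition of that identity into ``$\varphi_R$ is a group automorphism'' plus ``$\varphi_R$ commutes with dilations'' is a harmless repackaging of the same computation, and the rest of your argument (maximality of $s$, $\varphi_R(S_0)=\tilde S_0$, \Cref{rotaruled} for the differential) matches the template the paper invokes.
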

\begin{proof}
    The proof of this result, with the help of Proposition \ref{rotaruled}, follows as the proof of Proposition \ref{traslaruled} and Proposition \ref{dilaruled}, noticing that $\varphi_R(S_0)=(\varphi_R(S))_0$ and that, for a given $p=(z,t)\in S\setminus S_0$, $(v,0)\in \hhh T_pS$ and $s\in\Ru$, it holds that
    \begin{equation*}
   \begin{split}
       \varphi_R(p\cdot \delta_s(v,0))&=\varphi_R(z+sv,t+Q(z,sv))\\
       &=(R(z+sv),t+sQ(z,v))\\
       &=(Rz+sRv,t+sQ(Rz,Rv))\\
       &=(Rz,t)\cdot (sRv,0))\\
       &=\varphi_R(p)\cdot\delta_s(Rv,0).
   \end{split}   
    \end{equation*}
\end{proof}
As a corollary of Proposition \ref{rotaruled}, we can conclude our initial statement.
\begin{thm}\label{pseudoruled}
    If $S$ is ruled, then $\varphi(S)$ is ruled for any pseudohermitian transformation $\varphi$.
\end{thm}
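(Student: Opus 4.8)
The plan is to reduce the statement to the three closure results already in hand, namely Propositions \ref{traslaruled}, \ref{dilaruled} and \ref{rotaruleddue}, by factoring an arbitrary pseudohermitian transformation into building blocks of the three admissible types. Concretely, I would prove the normal form
\[
\varphi=\varphi_R\circ\delta_\lambda\circ\tau_q
\]
for suitable $q\in\hn$, $\lambda>0$ and an orthogonal $R$ of the block shape appearing in \eqref{rotaspec}. This is exactly where Proposition \ref{rotaruled} enters. Differentiating the two defining conditions of a pseudohermitian transformation, $d\varphi(\hhh)\subseteq\hhh$ and $\varphi\circ\mathcal J=\mathcal J\circ\varphi$, and recalling $d\mathcal J|_\hhh=J|_\hhh$, one gets that the horizontal part of $d\varphi$, read off in the left-invariant frame $(X_1,\dots,X_n,Y_1,\dots,Y_n)$, is a field of invertible matrices each commuting with $J$, hence each of the block form $\begin{bmatrix} A & B \\ -B & A\end{bmatrix}$ that characterizes the maps $\varphi_R$.

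The key step is then to upgrade this pointwise block structure to a rigid global normal form. I would argue that, since $\varphi$ is a contact transformation, its horizontal differential in the left-invariant frame cannot vary from point to point and can distort the sub-Riemannian metric only by a single global conformal factor; thus $d\varphi|_\hhh\equiv\lambda R$ for a constant $\lambda>0$ and a constant orthogonal $R$ of the required block form. Peeling these off, I set $\psi:=\delta_{1/\lambda}\circ\varphi_{R^{-1}}\circ\varphi$; using the differential formula of Proposition \ref{rotaruled}, together with the facts that $R^{-1}=R^{T}$ is again orthogonal of block form and that $\delta_{1/\lambda}$ is again a dilation, the chain rule gives that $\psi$ has horizontal differential equal to the identity in the left-invariant frame, while still preserving $\hhh$. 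Since $\psi_*X_j=X_j$ and $\psi_*Y_j=Y_j$ for all $j$, the map $\psi$ commutes with the flows of all horizontal left-invariant fields, hence with all right translations by elements of the one-parameter subgroups $\exp(sX_j),\exp(sY_j)$; as these generate $\hn$ by the bracket-generating property, $\psi$ commutes with every right translation and is therefore the left translation $\tau_q$ with $q=\psi(0)$. Rearranging gives the displayed factorization.

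Once the factorization is available, the conclusion is immediate, and this is precisely the promised corollary of Proposition \ref{rotaruled}: applying in turn Proposition \ref{traslaruled} to $\tau_q$, Proposition \ref{dilaruled} to $\delta_\lambda$, and Proposition \ref{rotaruleddue} to $\varphi_R$, we see that $\varphi(S)=\varphi_R\bigl(\delta_\lambda(\tau_q(S))\bigr)$ is ruled. I emphasize that the individual factors $\tau_q$ and $\delta_\lambda$ need not themselves be pseudohermitian for this to work: the only property used is that each of the three maps carries ruled hypersurfaces to ruled hypersurfaces, a property already proved independently of the pseudohermitian condition. (Note that $\mathcal J$ itself is the map $\varphi_R$ with $R=\begin{bmatrix} 0 & -I \\ I & 0\end{bmatrix}$, so it is covered by Proposition \ref{rotaruleddue}.)

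I expect the main obstacle to be the global normal form in the second paragraph, i.e.\ passing from the pointwise block structure of $d\varphi|_\hhh$ to its constancy and conformality, and then identifying the residual map as a left translation; everything else is bookkeeping on top of the closure propositions. If one prefers to sidestep this differential-geometric rigidity argument, the clean alternative is to quote the classical classification of contact CR (pseudohermitian) automorphisms of $\hn$ directly in the form $\varphi=\varphi_R\circ\delta_\lambda\circ\tau_q$, after which the theorem follows verbatim from Propositions \ref{traslaruled}, \ref{dilaruled} and \ref{rotaruleddue}.
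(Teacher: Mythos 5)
Your overall strategy --- factor $\varphi$ as $\varphi_R\circ\delta_\lambda\circ\tau_q$ and then apply Propositions \ref{traslaruled}, \ref{dilaruled} and \ref{rotaruleddue} to the individual factors --- is exactly the structure of the paper's proof, which consists of combining Proposition \ref{traslaruled}, Proposition \ref{rotaruleddue} and the classification of pseudohermitian transformations quoted from \cite[Theorem 4.1]{CL}. Your ``clean alternative'' of citing that classification directly is therefore precisely what the paper does, and in that form your argument is complete.

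The primary route you propose, however, has a genuine gap exactly where you suspect it. First, differentiating $\varphi\circ\mathcal J=\mathcal J\circ\varphi$ does not yield that the horizontal differential at a general point $p$ commutes with $J$: it yields the intertwining relation $d\varphi|_{\mathcal J(p)}\circ J=J\circ d\varphi|_p$ between the differentials at $p$ and at $\mathcal J(p)$, which collapses to an actual commutation statement only on the fixed-point set of $\mathcal J$, i.e.\ the $t$-axis. So even the pointwise block structure is not immediate off that axis. Second, and more seriously, the assertion that the horizontal differential of a contact transformation ``cannot vary from point to point and can distort the sub-Riemannian metric only by a single global conformal factor'' is false for contact transformations in general --- the contact group of $\hn$ is infinite dimensional --- so the constancy $d\varphi|_\hhh\equiv\lambda R$ must be extracted from the $\mathcal J$-commutation by a genuine rigidity argument. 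That rigidity is the entire content of the classification theorem the paper cites; as written, your second paragraph asserts the conclusion rather than proving it. The remaining steps are sound: identifying the residual map $\psi$ as a left translation from $\psi_*X_j=X_j$, $\psi_*Y_j=Y_j$ and the bracket-generating property is correct, as is the final bookkeeping with the three closure propositions (including your observation that $\tau_q$ and $\delta_\lambda$ need not themselves be pseudohermitian for the closure results to apply).
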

\begin{proof}
    It follows combining Proposition \ref{traslaruled}, Proposition \ref{rotaruleddue} and \cite[Theorem 4.1]{CL}.
\end{proof}
\section{Ruled hypersurfaces with countable characteristic set}\label{ruledsection}
The aim of this section is to characterise ruled hypersurfaces of $\hn$ with countable characteristic set, when $n\geq 2$. In the first Heisenberg group $\mathbb H^1$ there are examples of ruled, non-characteristic, smooth surfaces which are not vertical planes. As an instance, let us consider the surface $S$ parametrized by the map $\varphi:\Ru^2\longrightarrow\mathbb H^1$ defined by
\begin{equation*}
    \varphi(t,\theta):=(t\cos{\theta},t\sin{\theta},\theta).
\end{equation*}
Notice that $\varphi$ is smooth and injective. Moreover, 
\begin{equation*}
    \frac{\partial\varphi}{\partial t}(t,\theta)=\cos{\theta}\frac{\partial}{\partial x}+\sin{\theta}\frac{\partial}{\partial y}=\cos{\theta} X|_{\varphi(t,\theta)}+\sin{\theta}Y|_{\varphi(t,\theta)}
\end{equation*}
and
\begin{equation*}
     \frac{\partial\varphi}{\partial \theta}(t,\theta)=-t\sin{\theta}\frac{\partial}{\partial x}+t\cos{\theta}\frac{\partial}{\partial y}+T=-t\sin{\theta} X|_{\varphi(t,\theta)}+t\cos{\theta}Y|_{\varphi(t,\theta)}+(1+t^2)T.
\end{equation*}
This implies that $S$ is a smooth, non-characteristic surface, and moreover
\begin{equation*}
    \hhh T_{\varphi(t,\theta)} S=\spann\left\{\frac{\partial\varphi}{\partial t}(t,\theta)\right\}
\end{equation*}
for any $(t,\theta)\in \Ru^2$. Finally, for given $t,\theta,s\in\Ru$, it holds that
\begin{equation*}
    (t\cos{\theta},t\sin{\theta},\theta)\cdot(s\cos{\theta},s\sin{\theta},0)=((t+s)\cos{\theta},(t+s)\sin{\theta},\theta)\in S,
\end{equation*}
and so $S$ is ruled. 
However, the situation in higher dimensional Heisenberg groups is quite different, and the ruling condition turns out to be more restrictive. Indeed, we are going to prove that the only closed, ruled hypersurfaces with countable characteristic set in $\hn$, with $n\geq 2$, are hyperplanes.
 To this aim, we already know that vertical hyperplanes are non-characteristic and ruled, and that every non-vertical hyperplane
\begin{equation*}
    P:=\left\{(\bar x,\bar y,t)\in\hn\,:\,\sum_{j=1}^na_jx_j+\sum_{j=1}^nb_jy_j+ct+d=0\right\},
\end{equation*}
where clearly $c\neq 0$, is ruled and satisfies
\begin{equation*}
    P_0=\left\{\left(\frac{b_1}{c},\ldots,\frac{b_n}{c},-\frac{a_1}{c},\ldots,-\frac{a_n}{c},-\frac{d}{c}\right)\right\}.
\end{equation*}

Before proving \Cref{rulcar} we establish some preliminary results.
\begin{proposition}\label{lemmutile}
    Let $S$ be a hypersurface of class $C^1$. Assume that $S$ is closed and without boundary. Assume that $S$ is ruled and that $S_0$ is countable.
    Then 
    \begin{equation*}
        p\cdot \hhh T_p S\subseteq S
    \end{equation*}
    for any $p\in S\setminus S_0$.
\end{proposition}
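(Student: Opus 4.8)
The plan is to reduce everything to \Cref{addomestica} by an approximation argument, exploiting the countability of $S_0$ together with the fact that, for $n\geq 2$, the unit sphere of $\hhh T_p S$ is a connected manifold of positive dimension. First I would fix $p\in S\setminus S_0$, set $V:=\hhh T_p S$ (a vector space of dimension $2n-1$), and — identifying each $v\in V$ with the point $(v,0)\in\hn$ — introduce the injective map $\Phi:V\scu\hn$, $\Phi(v):=p\cdot(v,0)$, which is just the restriction to $V$ of the left translation $\tau_p$. Since $\Phi$ is injective and $S_0$ is countable, the set $C:=\Phi^{-1}(S_0)$ is a countable subset of $V$, and $0\notin C$ because $\Phi(0)=p\notin S_0$.

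Next I would call a unit vector $v\in V$ \emph{admissible} when the ray $\{p\cdot\delta_s(v,0):s\geq 0\}$ avoids $S_0$, that is when $sv\notin C$ for every $s\geq 0$. As $0\notin C$, a unit vector $v$ fails to be admissible exactly when $v=c/|c|$ for some $c\in C$, so the non-admissible directions form the countable set $\{c/|c|:c\in C\}$. For every admissible $v$, \Cref{addomestica} yields at once
\begin{equation*}
    \{p\cdot\delta_s(v,0):s\geq 0\}\subseteq S .
\end{equation*}

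To pass from admissible rays to all of $p\cdot\hhh T_p S$, I would argue by density and closedness. Let $\Sg$ be the unit sphere of $V$, a smooth manifold of dimension $2n-2\geq 1$; since a countable subset of a manifold of dimension $\geq 1$ has empty interior, the admissible directions form a dense subset of $\Sg$. Given an arbitrary $w\in V\setminus\{0\}$, I would write $w=|w|\,u$ with $u\in\Sg$ and pick admissible $v_k\scu u$. Taking $s=|w|$ in the inclusion above gives $p\cdot(|w|v_k,0)\in S$ for every $k$, and since $|w|v_k\scu w$ we obtain $p\cdot(|w|v_k,0)\scu p\cdot(w,0)$; the closedness of $S$ then forces $p\cdot(w,0)\in S$. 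As also $p\cdot(0,0)=p\in S$, this proves $\Phi(V)=p\cdot\hhh T_p S\subseteq S$.

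The hard part is precisely the density step, and this is where the higher-dimensional assumption is essential: it is the combination of countability of $S_0$ with $\dim\Sg=2n-2\geq 1$ that makes the admissible directions dense. In $\hh^1$ the sphere $\Sg$ collapses to two points and the argument breaks down — consistently with the ruled, non-characteristic, non-planar surface exhibited at the start of this section — so the conclusion genuinely relies on $n\geq 2$.
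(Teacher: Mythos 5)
Your argument is correct and is essentially the paper's own proof with the details filled in: the paper argues by contradiction that a point $q\in p\cdot\hhh T_pS\setminus S$ would be a limit of points of $S$, and the sequence it declares "easy to construct" is exactly your sequence $p\cdot(|w|v_k,0)$ along admissible directions, obtained from \Cref{addomestica} together with the countability of $S_0$ and the closedness of $S$. The only point worth flagging is that your density step uses $\dim\Sg=2n-2\geq 1$, i.e. the standing assumption $n\geq 2$ of this section, which the statement of the proposition does not record explicitly (and note that the helicoid-type example in $\hh^1$ does satisfy the conclusion, so it does not by itself show the statement fails for $n=1$).
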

\begin{proof}
    Let $S$ and $p$ as in the statement. Assume by contradiction that there exists $q\in p\cdot \hhh T_p S\setminus S$. Combining \Cref{addomestica} with the fact that $S_0$ is countable and that $S$ is ruled, it is easy to construct a sequence $(q_h)_h\subseteq S$ converging to $q$ as $h\to\infty$. Being $S$ closed, then $q\in S$, a contradiction.
\end{proof}
\begin{proposition}\label{analogodi3punto2}
  Let $S$ be a hypersurface of class $C^1$. Assume that $S$ is closed and without boundary. Assume that $S$ is ruled and that $S_0$ is countable. Assume that $0\in S\setminus S_0$. Then
      \begin{equation*}
        S\cap \hhh_0=\hhh T_0 S.
    \end{equation*}
\end{proposition}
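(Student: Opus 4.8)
The inclusion $\hhh T_0 S\subseteq S\cap\hhh_0$ is the easy half. Indeed $\hhh T_0S\subseteq\hhh_0$ by definition, while \Cref{lemmutile} applied at $p=0$ gives $\hhh T_0 S=0\cdot\hhh T_0 S\subseteq S$. The content of the statement is therefore the reverse inclusion $S\cap\hhh_0\subseteq\hhh T_0S$, and the plan is to reduce it to a statement about horizontal segments lying on $S$.

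Fix $q\in S\cap\hhh_0$ and write $q=(z_q,0)$. The crucial elementary remark is that, since $Q$ is antisymmetric, $Q(z_q,z_q)=0$, so the Euclidean segment $\sigma(s):=\delta_s(q)=(sz_q,0)$, $s\in[0,1]$, joining $0$ to $q$ inside $\hhh_0$ is a \emph{horizontal} curve. If one manages to prove that $\sigma([0,1])\subseteq S$, then $\dot\sigma(0)=(z_q,0)$ is a horizontal vector lying in $T_0S$, whence $(z_q,0)\in\hhh_0\cap T_0S=\hhh T_0S$; this gives $q\in\hhh T_0 S$ and closes the proof. Thus everything reduces to showing that the connecting segment $\sigma$ lies on $S$. (Note that this is, in fact, \emph{equivalent} to $q\in\hhh T_0S$: if $z_q\in\hhh T_0S$ then $\sigma\subseteq 0\cdot\hhh T_0S\subseteq S$ by \Cref{lemmutile}; so the real work is precisely to establish the containment of the segment.)

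To attack $\sigma([0,1])\subseteq S$ I would argue along the segment using the ruling. First, since $0\in S\setminus S_0$, the hyperplane $\hhh_0$ is transverse to $S$ at $0$ (transversality fails exactly when $T_0S=\hhh_0$, i.e. when $0\in S_0$); hence near $0$ the set $S\cap\hhh_0$ is a $C^1$ submanifold of dimension $2n-1$ containing the $(2n-1)$-plane $\hhh T_0S$, so the two coincide in a neighbourhood of $0$. The key dynamical input is the following dichotomy: at any $p=\sigma(s_\ast)\in S\setminus S_0$ one has $(z_q,0)\in\hhh_p$ (again because $Q(z_q,z_q)=0$), so if in addition $(z_q,0)\in T_pS$ then $z_q\in\hhh T_pS$, and \Cref{lemmutile} forces the whole line $\{p\cdot\delta_s(z_q,0):s\in\rr\}=\{\sigma(s_\ast+s):s\in\rr\}$ to lie in $S$. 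In other words, a \emph{tangential} contact of $\sigma$ with $S$ at a non-characteristic point immediately propagates $\sigma$ into $S$ in both directions and, via the contact at $s=0$, yields $z_q\in\hhh T_0S$. Therefore either $q\in\hhh T_0 S$ and we are done, or every contact of $\sigma$ with $S$ at a non-characteristic point is transverse.

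The main obstacle is to upgrade this pointwise alternative to the global conclusion $\sigma([0,1])\subseteq S$. Here I would exploit that $S$, being topologically closed and without boundary, is properly embedded, so that each of its connected components separates $\rr^{2n+1}$; that $S_0$ is countable, so the (at most countably many) parameters $s$ with $\sigma(s)\in S_0$ are negligible; and that, by \Cref{addomestica} and \Cref{lemmutile}, the full $(2n-1)$-dimensional ruling through each non-characteristic point of $\sigma\cap S$ lies in $S$. Concretely, setting $s_0:=\inf\{s\in(0,1]:\sigma(s)\in S\}$, the transversality established above rules out a tangential first contact, while the codimension-one rulings emanating from the points of $\sigma\cap S$ foliate a neighbourhood of the segment inside $S$ and allow one to control the $t$-coordinate off the segment; combining this ruling foliation with the separation property and the local rigidity $S\cap\hhh_0=\hhh T_0S$ near $0$, one forces a configuration incompatible with $q\notin\hhh T_0S$. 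I expect this globalisation — turning the local ruling/transversality dichotomy into control of the \emph{entire} segment, handling the countable characteristic contributions along the way — to be the genuinely delicate step, whereas the reduction to the horizontal segment and the propagation statement are comparatively soft.
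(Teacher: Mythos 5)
Your reduction of the hard inclusion to the claim that the segment $\sigma(s)=(sz_q,0)$ lies in $S$ (equivalently $(z_q,0)\in\hhh T_0S$) is correct, and the easy inclusion via \Cref{lemmutile} matches the paper. But the part you yourself flag as ``the genuinely delicate step'' is exactly where the proof is missing, and the mechanism you sketch for it (separation properties of a properly embedded hypersurface, a ``ruling foliation'' of a neighbourhood of the segment, control of the $t$-coordinate) does not close the argument. In particular, if $q\notin\hhh T_0S$ then $\sigma(s)\notin S$ for all small $s>0$, so the first contact $s_0$ is positive and, by your own dichotomy, transverse; a transverse intersection of a line with a hypersurface carries no further information, and there is nothing along $\sigma((0,s_0))\cap S=\emptyset$ to foliate. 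Nothing in your sketch visibly uses $n\geq 2$, which is essential here (the section works under the standing assumption $n\geq 2$, and the analogous rigidity fails in $\hh^1$).

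The paper's actual argument is a dimension count that bypasses the segment entirely. Arguing by contradiction with $q=(z_q,0)\in(S\cap\hhh_0)\setminus\hhh T_0S$ (adjusted, using countability of $S_0$, so that $q$ is non-characteristic), \Cref{lemmutile} gives two affine subspaces of $\hhh_0$ contained in $S\cap\hhh_0$: the $(2n-1)$-dimensional $\hhh T_0S$ and the at least $(2n-2)$-dimensional $q\cdot\hhh T_qS\cap\hhh_0$. Inside the $2n$-dimensional $\hhh_0$ these meet in an affine subspace of dimension $\geq 2n-3\geq 1$, which (again by countability of $S_0$) contains a non-characteristic point $p=(z_p,0)$. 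Since $p\in\hhh T_0S$, the line $t\mapsto(tz_p,0)$ lies in $S$, so $(z_p,0)\in\hhh T_pS$; together with the direction $v\in\hhh T_pS$ pointing from $p$ to $q$, the ruling at $p$ contains the $2$-dimensional \emph{linear} subspace $\spann\{(z_p,0),(v,Q(z_p,v))\}\subseteq S$, which contains $q$ and hence the whole line $t\mapsto(tz_q,0)$. This yields $(z_q,0)\in\hhh T_0S$, the desired contradiction. This intermediate point $p$ lying on both rulings is the key idea absent from your proposal.
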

\begin{proof}
      First, since $0\in S\setminus S_0$ and in view of \Cref{lemmutile}, then $\hhh T_0 S\subseteq S\cap \hhh_0$. Assume by contradiction that there exists $q=(z_q,0)\in (S\cap \hhh_0)\setminus \hhh T_0 S$. If $S$ is tangent to $\hhh_0$ at $q$, then $q\in S\setminus S_0$. Otherwise, since $\hhh T_0 S$ is closed and $S_0$ is countable, it is possible to find another point in $(S\setminus S_0)\setminus \hhh T_0 S$. In the end, we can assume that $q\in S\setminus S_0$. Again, thanks to \Cref{lemmutile}, $q\cdot \hhh T_q S\subseteq S$, and so $q\cdot \hhh T_q S\cap \hhh_0\subseteq S\cap \hhh_0$.  Note that both $\hhh T_0 S$ and $q\cdot \hhh T_q S\cap \hhh_0$ are affine subspaces of $\hhh_0$. Moreover, $\dim(\hhh T_0 S)=2n-1$ and $\dim(q\cdot \hhh T_q S\cap \hhh_0)\geq 2n-2$. Therefore we conclude that 
    \begin{equation*}
            \dim (\hhh T_0 S\cap (q\cdot \hhh T_q S\cap \hhh_0))\geq\dim (\hhh T_0 S)+\dim (q\cdot \hhh T_q S\cap \hhh_0)-2n=2n-3\geq 1,
    \end{equation*}
    since $n\geq 2$. Therefore $(\hhh T_0 S)\cap (q\cdot \hhh T_q S\cap H_0)$ contains a one-dimensional affine subspace of $H_0$. In particular, being $S_0$ countable, there exists $p=(z_p,0)\in \hhh T_0 S\cap (q\cdot \hhh T_q S\cap \hhh_0)\cap (S\setminus S_0)$. Let $v\in \hhh T_p S$ be such that $p\cdot tv=q$ for some $t\in \rr$, and let $\gamma_p(t):=(tz_p,0)$. Notice that, by construction, then $\gamma_p (t)\in S$ for any $t\in\rr$. Moreover, $\Dot\gamma_p(1)=(z_p,0)\in \hhh_p$, and so $w:=(z_p,0)\in \hhh T_p S$. Again, since $p\in S\setminus S_0$ and in view of \Cref{lemmutile}, then $p\cdot \hhh T_p S\subseteq S$. Therefore, in particular, it holds that 
    \begin{equation*}
        p\cdot(\alpha v+\beta w)\in S
    \end{equation*}
    for any $\alpha,\beta\in\rr$. Hence, if we let $\gamma_q(t):=(tz_q,0)$, we conclude that $\gamma(t)\in S\cap \hhh_0$ for any $t\in\rr,$ and so $\Dot\gamma_q(0)=(z_q,0)\in T_0 S$. Since clearly $(z_q,0)\in \hhh_0$, then $q\in \hhh T_0 S$, which is a contradiction.
\end{proof}

\begin{proposition}\label{tgraf}
    Let $S$ be a hypersurface of class $C^1$. Assume that $S$ is closed and without boundary. Assume that $S$ is ruled and that $S_0$ is countable. Then either $S$ is a $t$-graph or $S$ is a vertical hyperplane.
\end{proposition}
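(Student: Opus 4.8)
The plan is to study $S$ through the vertical projection $\pi:\hn\to\rr^{2n}$, $\pi(z,t)=z$, whose fibers are the vertical lines, i.e. the orbits of the vertical translations $(z,t)\mapsto(z,t+s)$. For $p\in S$ one has $\ker\big(d\pi|_{T_pS}\big)=T_pS\cap\rr T$, so $\pi|_S$ is a local $C^1$-diffeomorphism near $p$ exactly when $T\notin T_pS$; recall moreover that at every characteristic point $T\notin T_pS$. I would therefore split according to whether the vertical direction is ever tangent to $S$, setting $A:=\{p\in S:\,T\in T_pS\}\subseteq S\setminus S_0$.

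If $A=\emptyset$, then $\pi|_S$ is a local diffeomorphism at every point and $S$ is transverse to the vertical fibration. I would first invoke \Cref{lemmutile}: through each $p\in S\setminus S_0$ the entire affine $(2n-1)$-plane $L_p:=p\cdot\hhh T_pS$ lies in $S$ and projects bijectively onto the affine hyperplane $\pi(L_p)=z_p+\hhh T_pS\subseteq\rr^{2n}$. Using that $S$ is closed and without boundary, together with these entire planes, I would argue that $\pi|_S$ is proper, hence a proper local diffeomorphism, hence a covering map onto an open and closed — thus full — subset of $\rr^{2n}$; since $\rr^{2n}$ is simply connected the covering is trivial, so (on each connected component) $\pi|_S$ is a diffeomorphism onto $\rr^{2n}$ and $S=\graf(u)$ is a $t$-graph.

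If $A\neq\emptyset$, I would fix $p\in A$ and, since left-translations preserve all the hypotheses and carry vertical hyperplanes to vertical hyperplanes (\Cref{traslaruled}), translate so that $p=0$; thus $0\in S\setminus S_0$ with $T\in T_0S$. By \Cref{analogodi3punto2} the slice $S\cap\hhh_0$ equals $V:=\hhh T_0S$, an affine $(2n-1)$-plane contained in $\{t=0\}$, and since $T\in T_0S$ we obtain $T_0S=V\oplus\rr T$. The goal is then to show that every horizontal slice is this same plane, $S\cap\{t=s\}=V\times\{s\}$ for all $s$, so that $S=V\times\rr$ is a vertical hyperplane. To propagate the flat slice vertically I would apply the left-translated form of \Cref{analogodi3punto2} at non-characteristic points of $V$ and exploit the rulings they emit, using $n\geq2$ so that $\dim V=2n-1\geq3$ and the symplectic form $Q$ degenerates on $V$ in a controlled way that produces the required vertical displacement.

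The main obstacle is precisely the case $A\neq\emptyset$: upgrading a single vertical tangency into full invariance of $S$ under vertical translations (equivalently, showing the slices remain flat and parallel at every height) is the rigid step where the higher-dimensional hypothesis and \Cref{analogodi3punto2} are indispensable — in $\hh^1$ this fails, as the ruled helicoidal example shows. A secondary difficulty lies in the case $A=\emptyset$, where ruling out vertical escape over a bounded base (properness of $\pi|_S$) is what forces the graph to be defined on all of $\rr^{2n}$ rather than on a proper subdomain, and this again relies essentially on the rulings being entire planes rather than mere segments.
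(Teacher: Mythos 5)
Your dichotomy --- whether or not $T$ is ever tangent to $S$ --- is exactly the one the paper uses, and your reduction of the first case to a covering argument and of the second to showing $S\supseteq V\times\rr$ is the right skeleton. The problem is that in the second case, which is the heart of the proposition, you stop where the proof actually begins. ``Apply the left-translated form of \Cref{analogodi3punto2} at non-characteristic points of $V$ and exploit the rulings they emit'' does not by itself produce a single point of $S$ outside the slice $\{t=0\}$. One must show: (i) that the tangency $T\in T_0S$ forces every ruling emanating from a point $p\in V\cap(S\setminus S_0)$ to project into $\pi(V)$, whence $\hhh T_pS=\hhh T_0S$ for all such $p$; (ii) that the restriction of the symplectic form $Q$ to $\pi(V)$ has kernel $\mathcal Z$ of dimension at most $2n-2$ --- this is precisely where $n\geq 2$ enters, since for $n=1$ the restriction vanishes identically and the construction fails, consistently with your helicoidal example; and (iii) that for every $(z_q,t_q)$ with $z_q\in\pi(V)\setminus\mathcal Z$ one can pick a base point $z_p\in\pi(V)$ (a multiple of some $w$ with $Q(w,z_q)\neq 0$, scaled so that $Q(z_p,z_q)=t_q$, and chosen non-characteristic using countability of $S_0$) whose ruling passes through $(z_q,t_q)$; density of $\pi(V)\setminus\mathcal Z$ in $\pi(V)$ and closedness of $S$ then yield $V\times\rr\subseteq S$. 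None of (i)--(iii) appears in your proposal, and your phrase ``the symplectic form $Q$ degenerates on $V$ in a controlled way that produces the required vertical displacement'' has the mechanism backwards: it is the \emph{non}-degeneracy of $Q|_{\pi(V)}$ off a proper subspace that generates the vertical displacement $t_q=Q(z_p,z_q)$; full degeneracy is exactly the failure mode.

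A secondary gap sits in the case $A=\emptyset$: you assert that $\pi|_S$ is proper ``using that $S$ is closed \dots together with these entire planes,'' but properness is not a formal consequence of those facts --- you must exclude a sequence $(z_h,t_h)\in S$ with $z_h$ bounded and $|t_h|\to\infty$, and the entire ruling planes through the $(z_h,t_h)$ do not obviously prevent this. (The paper is also terse at the corresponding step, phrasing it contrapositively as: if $S$ is not a $t$-graph then some non-characteristic point has $T$ tangent; but in a written proof the argument has to be supplied, not merely named.)
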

\begin{proof}
    If $S$ is a $t$-graph we are done. If $S$ is not a $t$-graph, being $S_0$ countable, there exists $p\in S\setminus S_0$ such that $T|_p\in T_p S$. Up to a left-translation, recalling \Cref{traslaruled}, we assume that $p=0$. We show that $S$ is a vertical hyperplane, dividing the proof into some steps.\\
    \textbf{Step 1.}
    Thanks to \Cref{analogodi3punto2}, we know that there exists $0\neq(\bar a,\bar b)\in \rr^{2n}$ such that
    \begin{equation*}
        \hhh T_0 S=\hhh_0\cap S=\{(\bar x,\bar y,0)\in \hn\,:\,\langle (\bar a,\bar b),(\bar x,\bar y)\rangle=0\}.
    \end{equation*}
    We assume without loss of generality that $a_1\neq 0$, and we let $f(\bar x,\bar y):=\langle (\bar a,\bar b),(\bar x,\bar y)\rangle$.
    We claim that 
    \begin{equation*}
        \pi(p\cdot \hhh T_p S)\subseteq \pi (\hhh T_0 S)
    \end{equation*}
    for any $p\in \hhh T_0 S\cap (S\setminus S_0)$, where here and in the following the map $\pi:\hn\longrightarrow\rr^{2n}$ is defined by
\begin{equation*}
    \pi(\bar x,\bar y,t):=(\bar x,\bar y).
\end{equation*}
\begin{comment}
    It is easy to see that $\pi$ is smooth, surjective and open. 
\end{comment}
Assume by contradiction that there exists $p=(z_p,0)\in \hhh T_0 S\cap (S\setminus S_0)$ and $v=(v,0)\in \hhh T_p S$ such that $z_p+v\notin\pi(\hhh T_0 S)$. This is equivalent to say that $f(z_p+v)\neq 0$.
    Let us define $q:=p\cdot v=(z_p+v, Q(z_p,v))$. Since $p\in S\setminus S_0$ and by \Cref{lemmutile}, then $q\in S$. Moreover, $Q(z_p,v)\neq 0$, since otherwise $q\in \hhh T_0 S$ and consequently $f(z_p+v)=0$.
    Moreover, since $z_p\in \hhh T_0 S$, then, letting $\gamma (t):=(tz_p,0)$, it holds that $\gamma(t)\in S$ for any $t\in\rr$, and so $(z_p,0)\in \hhh T_p S$. Hence, since $p\cdot \hhh T_p S\subseteq S$, we conclude in particular that
    \begin{equation*}
       P:=\{(z_p,0)+\alpha(z_p,0)+\beta (v, Q(z_p,v))\,:\,\alpha,\beta\in\rr\}\subseteq S.
    \end{equation*}
    Notice that $P$ is a vector subspace of $\rr^{2n+1}$. Then in particular $0\in P$ and $ (v,Q(z_p,v))\in T_0 S$.
    Therefore, as $T\in T_0 S$, then $(v,0)\in T_0 S$, and so, since $(v,0)\in \hhh_0$, we conclude that $(v,0)\in \hhh T_0 S$. Then $f(v)=0$, and so, as $p\in \hhh T_0 S$, $f(z_p+v)=f(z_p)+f(v)=0$, a contradiction.\\
    \textbf{Step 2.} Let $p=(z_p,0)\in \hhh T_0 S\cap (S\setminus S_0)$. Thanks to Step 1, we know that $\pi(p\cdot \hhh T_p S)\subseteq \pi (\hhh T_0 S)$. Therefore, if $v\in \hhh T_p S$, then $f(z_p+v)=0$. Since $f(z_p)=0,$ we conclude that $f(v)=0$, which implies that
    \begin{equation}\label{htuguali}
        \hhh T_pS=\hhh T_0 S
    \end{equation}
    for any $p\in \hhh T_0 S$. Moreover, an easy computation shows that 
    \begin{equation*}
    \hhh T_0 S=\spann\{Z_2|_0,\ldots,Z_n|_0,W_1|_0,\ldots,W_n|_0\},
\end{equation*}
where \begin{equation}\label{baseovunque}
    Z_i=a_i X_1-a_1X_i\qquad\text{and}\qquad W_j=b_j X_1-a_1Y_j
\end{equation}
for any $i=2,\ldots,n$ and $j=1,\ldots,n$.
\begin{comment}
    \begin{equation*}
    Z_j=a_j X_1-a_1X_j
\end{equation*}
for any $j=2,\ldots,n$, and
\begin{equation*}
    W_j=b_j X_1-a_1Y_j
\end{equation*}
for any $j=1,\ldots,n$.
\end{comment}
 Then \eqref{htuguali} allows to conclude that 
    \begin{equation}\label{htcome}
    \hhh T_pS=\spann\{Z_2|_p,\ldots,Z_n|_p,W_1|_p,\ldots,W_n|_p\}.
\end{equation}
\textbf{Step 3.}
Let us define 
\begin{equation*}
    \mathcal Z:=\{z\in\pi(\hhh T_0 S)\,:\,Q(z,w)=0\text{ for any }w\in\pi (\hhh T_0 S)\}.
\end{equation*}
    Notice that, being $Q$ a bilinear map, then $\mathcal Z$ is a vector subspace of $\pi(\hhh T_0 S)$. We claim that $\dim (\mathcal{Z})\leq 2n-2$. Indeed, assume by contradiction that $\dim (\mathcal{Z})\geq 2n-1$. Then, since $\mathcal Z\subseteq \pi(\hhh T_0 S)$ and $\dim (\pi(\hhh T_0 S))=2n-1$, we conclude that $\mathcal{Z}=\pi (\hhh T_0 S)$. We show that this leads to a contradiction. Assume first that $a_2=\ldots=a_n=b_2=\ldots=b_n=0$, and set $z_1=(0,-1,0\ldots,0)$ and $z_2=(\bar 0,0,1,0,\ldots, 0)$. Then $f(z_1)=f(z_2)=0$ and $Q(z_1,z_2)=1\neq 0$, which implies that $z_1,z_2\notin \mathcal Z$. If it is not the case that $a_2=\ldots=a_n=b_2=\ldots=b_n=0$, then assume without loss of generality that $a_2\neq 0$. Let $z_1=(-a_2,a_1,0,\ldots,0)$ and $z_2=(-b_1,0,\ldots, 0,a_1,0,\ldots,0)$. Then $f(z_1)=f(z_2)=0$ and $Q(z_1,z_2)=a_1a_2\neq 0$, which implies again that $z_1,z_2\notin\mathcal Z$. Therefore we conclude that $\dim(\mathcal Z)\leq 2n-2$, and so in particular
\begin{equation}\label{chiusura}
    \overline{\pi(\hhh T_0 S)\setminus \mathcal K}=\pi(\hhh T_0 S).
\end{equation}
\textbf{Step 4.}
We claim that for any $q=(z_q,t_q)=(x_1^q,\ldots,x_n^q,y_1^q,\ldots,y_n^q,t_q)$ such that $z_q\in \pi(\hhh T_0 S)\setminus\mathcal Z$ there exists $p=(z_p,0)=(x_1^p,\ldots,x_n^p,y_1^p,\ldots,y_n^p,0)\in \hhh T_0 S\cap (S\setminus S_0)$ and $v\in \hhh T_p S$ such that
\begin{equation}\label{raggiungere}
    q=p\cdot v.
\end{equation}
Indeed, let $q$ as above, and let $p\in \hhh T_0 S\cap (S\setminus S_0)$ and $v\in \hhh T_p S$ to be chosen later. In view of \eqref{htcome}, we can express $v$ as
\begin{equation*}
    v=\sum_{j=2}^n\alpha_j Z_j|_p+\sum_{j=1}^n\beta_j W_j|_p=\left(\sum_{j=2}^n\alpha_ja_j+\sum_{j=1}^n\beta_jb_j\right)X_1|_p-\sum_{j=2}^n\alpha_j a_1X_j|_p-\sum_{j=1}^n\beta_ja_1Y_j|_p.
\end{equation*}
for some $\alpha_2,\ldots,\alpha_n,\beta_1,\ldots,\beta_n\in\rr$. Therefore, we infer that
\begin{equation*}
        p\cdot v=\left(x_1^p+\sum_{j=2}^n\alpha_ja_j+\sum_{j=1}^n\beta_jb_j,x_2^p-\alpha_2a_1,\ldots,y_n^p-\beta_na_1, Q(z_p,v)\right).
\end{equation*}
Let us choose
\begin{equation*}
    \alpha_i=\frac{x_i^p-x_i^q}{a_1}\qquad\textit{and}\qquad \beta_j=\frac{y_j^p-y_j^q}{a_1}
\end{equation*}
for any $i=2,\ldots,n$ and any $j=1,\ldots,n$. This choice implies that $(p\cdot v)_i=x_i^q$ and $(p\cdot v)_j=y_{j-n}^q$
for any $i=2,\ldots,n$ and any $j=n+1,\ldots,2n$. Moreover, since $f(z_p)=f(z_q)=0$, it holds that 
\begin{equation*}
        (p\cdot v)_1= x_1^p+\sum_{j=2}^n\alpha_ja_j+\sum_{j=1}^n\beta_jb_j=\frac{1}{a_1}\left(\sum_{j=1}^n(a_jx_j^p+b_jy_j^p)-\sum_{j=2}^ma_jx_j^q+\sum_{j=1}^nb_jy_j^q\right)
        =x_1^q.
\end{equation*}
Finally, notice that
\begin{equation*}
    \begin{split}
Q(z_p,v)&=\left(\sum_{j=2}^n\alpha_ja_j+\sum_{j=1}^n\beta_jb_j\right)y_1^p-\sum_{j=2}^n\alpha_ja_1y_j^p+\sum_{j=1}^n\beta_ja_1x_j^p\\
&=\frac{1}{a_1}\Big(\sum_{j=2}^na_jx_j^py_1^p-\sum_{j=2}^na_jx_j^qy_1^p+\sum_{j=1}^nb_jy_j^py_1^p-\sum_{j=1}^nb_jy_j^qy_1^p\\
&-\sum_{j=2}^na_1x_j^py_j^p+\sum_{j=2}^na_1 x_j^qy_j^p+a_1x_1^py_1^p+\sum_{j=2}^na_1x_j^py_j^p-\sum_{J=1}^na_1x_j^py_j^q\Big)\\
&= \frac{1}{a_1}\Big(-\sum_{j=1}^na_jx_j^qy_1^p-\sum_{j=1}^nb_jy_j^qy_1^p+\sum_{j=1}^na_1 x_j^qy_j^p-\sum_{j=1}^na_1x_j^py_j^q\Big)\\
&=Q(z_p,z_q),
    \end{split}
\end{equation*}
where in the third equality we exploited the fact that $f(z_p)=0$, while the fourth equality follows from $f(z_q)=0$. Since we assumed $z_q\notin\mathcal Z$, then there exists uncountably many $w\in \pi(\hhh T_0 S)$ such that $Q(w,z_q)\neq 0.$ Therefore, since $S_0$ is countable, it is possible to choose $w\in \pi(\hhh T_0 S)$ such that, setting 
\begin{equation*}
    z_p=\frac{t_q}{Q(w,z_q)}w,
\end{equation*}
then $p\in (S\setminus S_0)$. We conclude that $p\in \hhh T_0 S\cap (S\setminus S_0)$ and $Q(z_p,z_q)=t_q$.\\
\textbf{Step 5.} We are now able to conclude. Indeed, thanks to \eqref{raggiungere} we infer that 
\begin{equation*}
    \pi(HT_0S\setminus\mathcal K)\times\Ru\subseteq S.
\end{equation*}
But then, being $S$ closed and recalling \eqref{chiusura}, we conclude that
\begin{equation*}
    \pi(HT_0S)\times \Ru=\overline{\pi(HT_0S)\setminus\mathcal K}\times \Ru=\overline{\pi(HT_0S)\times \Ru}\subseteq \overline S=S.
\end{equation*}
Therefore $S$ contains the vertical hyperplane $\pi(HT_0S)\times\Ru$. The thesis then follows in view of the topological assumptions on $S$.
\end{proof}
\begin{proof}[Proof of \Cref{rulcar}]
    Let $S$ be as in the statement. If $S$ is a vertical hyperplane, we are done. If not, in view of \Cref{tgraf}, $S$ is a $t$-graph. Being $S_0$ countable, and recalling \Cref{traslaruled}, up to a left translation we may assume that $0\in S\setminus S_0$ and that $T|_0\notin T_0 S$. Since $0\in S\setminus S_0$, we infer by \Cref{analogodi3punto2} that
\begin{equation*}
    \hhh T_0 S=\hhh_0\cap S=\{(\bar x,\bar y,0)\in\hn\,:\,\langle (\bar x,\bar y),(\bar a,\bar b)\rangle=0\}
\end{equation*}
for some $0\neq (\bar a,\bar b)\in\rr^{2n}$. Again, by \Cref{rotaruled} we may assume that $a_1\neq 0$. Being $S$ an entire $t$-graph, and since $T|_0\notin T_0 S$ and $0\in S\setminus S_0$, there exists $v=(z_v,t_v)\in T_0 S$ such that $f(z_v)\neq 0$ and $t_v\neq 0$. Let us set $c:=-\frac{f(z_v)}{t_v}$. We claim that 
\begin{equation*}
    S=\{(z,t)\in\hn\,:\,f(z)+ct=0\}=:S_c.
\end{equation*}
Indeed, let $p=(z_p,t_p)\in S\setminus S_0.$ If $t_p=0$, then $f(z_p)=0$, and so $p\in S_c$. Assume then $t_p\neq 0$. Then, being $S$ a $t$-graph, we infer that $f(z_p)\neq 0.$.
Let $v_1,\ldots,v_{2n-1}$ be a basis of $\hhh T_p S$. Since $p\in S\setminus S_0$ and thanks to \Cref{lemmutile}, then $p\cdot \hhh T_p S\subseteq S$. 
We claim that there exists $j=1,\ldots,2n-1$ such that $Q(z_p,v_j)\neq 0$. 
    Indeed, assume by contradiction that $  Q(z_p,v_1)=\ldots=Q(z_p,v_{2n-1})=0.$
In this case, recalling that $S$ is ruled, it holds that
\begin{equation}\label{parallelo}
    p\cdot \hhh T_p S=\left\{\left(z_p+\sum_{j=1}^{2n-1}\alpha_jv_j,t_p\right)\,:\,\alpha_1,\ldots,\alpha_{2n-1}\in\rr\right\}\subseteq S.
\end{equation}
We claim that 
\begin{equation}\label{duespan}
    \spann\{(v_1,0),\ldots,(v_{2n-1},0)\}=\spann\{Z_2|_0,\ldots,Z_n|_0,W_1|_0,\ldots,W_n|_0\},
\end{equation}
where $Z_2,\ldots,Z_n,W_1,\ldots,W_n$ are defined as \eqref{baseovunque}. Indeed, if it was not the case, then \eqref{parallelo} would imply the existence of $q=(z_q,t_p)\in S$ such that $z_q\in\pi(\hhh T_0 S)$. But since $t_p\neq 0$ and since $(z_q,0)\in S$, we would contradict the fact that $S$ is a $t$-graph. Notice that \eqref{duespan} implies that
\begin{equation*}
Z_2|_0,\ldots,Z_n|_0,W_1|_0,\ldots,W_n|_0\in H_p
\end{equation*}
and so, observing that 
\begin{equation*}
    Z_j|_0=a_j\frac{\partial}{\partial x_1}-a_1\frac{\partial}{\partial x_j}=a_j X_1|_p-a_1X_j|_p+(a_1y_j-a_j y_1)T
\end{equation*}
for any $j=2,\ldots,n$ and
    \begin{equation*}
    W_j|_0=b_j\frac{\partial}{\partial x_1}-a_1\frac{\partial}{\partial y_j}=b_j X_1|_p-a_1Y_j|_p+(-a_1x_j-b_j y_1)T
\end{equation*}
for any $j=1,\ldots,n$, we conclude that 
\begin{equation*}
    z_p=\frac{y_1}{a_1}(-b_1,\ldots,-b_n,a_1,\ldots,a_n),
\end{equation*}
which implies in particular that $f(z_p)=0$, a contradiction.
In this case, it holds that $p\cdot H T_p S\cap \hhh_0\cap S=p\cdot \hhh T_p S\cap \hhh T_0 S\neq 0$. Since $n\geq 2$, a dimensional argument as in the proof of \Cref{analogodi3punto2} implies that $\dim (p\cdot \hhh T_p S\cap \hhh_0\cap S)\geq 1$. Therefore, being $S_0$ countable, there exists $q=(z_q,0)\in (p\cdot \hhh T_p S)\cap \hhh T_0 S\setminus S_0$. Let then $w\in \hhh T_p S$ be such that
\begin{equation}\label{pq}
    (z_q,0)=(z_p+w,t_p+Q(z_pw)).
\end{equation}
Arguing as in the proof of \Cref{analogodi3punto2}, recalling that $q\in S\setminus S_0$ and \Cref{lemmutile}, we see that
  \begin{equation*}
       P:=\{(z_q,0)+\alpha(z_q,0)+\beta (w, Q(z_q,w))\,:\,\alpha,\beta\in\rr\}\subseteq S,
    \end{equation*}
and so we conclude as above that $(w.Q(z_q,w))\in T_0 S$. This means that there exists $\tilde w\in \pi(\hhh T_0 S)$ and $\alpha\in\rr$ such that
\begin{equation*}
    (w.Q(z_q,w))=(\tilde w+\alpha z_v,\alpha t_v).
\end{equation*} 
Therefore, recalling \eqref{pq}, we get that
\begin{equation*}
     (z_p,t_p)=(z_q,0)-(w,Q(z_p,w))  =\left(z_q-\tilde w-\alpha z_v,-\alpha t_v\right).
\end{equation*}
Therefore, since $z_q,\tilde w\in\pi(\hhh T_0 S)$ we conclude that  
\begin{equation*}
    f(z_p)+ct_p=-\alpha (f(z_v)+ct_v)=0,
\end{equation*}
which implies that $p\in S_c$. Therefore we proved that $S\setminus S_0\subseteq S_c$, and so, being $S_0$ countable and $S$ and $S_c$ closed, we conclude that $S\subseteq S_c$. The thesis then follows by the topological assumptions on $S$.
\end{proof}

\section{Horizontal second fundamental forms on $\hh^n$}\label{secfundsection}
In the current literature, different kinds of second fundamental form are available in the sub-Riemannian setting. We recall that the \emph{horizontal second fundamental form of $S$ at $p\in S\setminus S_0$} (cf. \cite{MR2401420,MR2354992,MR3385193}) is the map $h_p:\hhh T_pS\times \hhh T_pS\longrightarrow \rr$ defined by
\begin{equation*}
    h_p(X,Y)=-\langle\nabla_XY,\vh\rangle=\langle\nabla_X\vh,Y\rangle
\end{equation*}
for any $X,Y\in \hhh T_pS$, the second equality following being $\nabla$ a metric connection. 
%In particular $h$ is a $C^\infty$-bilinear map. 
We recall that its norm is defined by
\begin{equation*}
    |h_p|^2=\sum_{i,j=1}^{2n-1}h_p(e_i,e_j)^2
\end{equation*}
for any $p\in S$, being $e_1,\ldots,e_{2n-1}$ any orthonormal basis of $\hhh T_pS$. Notice that, in view of \eqref{pseudotorsion}, $h$ may not be symmetric. 
%Moreover, by the $C^\infty$-bilinearity of $h$, then clearly
%\begin{equation*}
 %   h_p\equiv 0\qquad\text{if and only if}\qquad|h_p|=0
%\end{equation*}
%for any $p\in S$. 
The \emph{horizontal mean curvature $H_p$ of $S$ at $p\in S\setminus S_0$} is defined by
\begin{equation*}
    H_p=\tau(h_p)=\sum_{j=1}^{2n-1}h_p(e_j,e_j),
\end{equation*}
where here and in the following $\tau$ denote the trace operator.
In analogy with the Riemannian case, the horizontal mean curvature coincides with the divergence of the horizontal normal (cf. \cite{MR2354992}), meaning that
\begin{equation*}
   H=\divv_\hh\vh(p)
\end{equation*}
for any $p\in S$. Accordingly, the following characterization of $|h|^2$ holds. 
\begin{proposition}\label{norofh}
    Let $p$ be a non-characteristic point of $S$. Let $\vh$ be any unitary $C^2$ extension of $\vh|_S$. Then 
    \begin{equation*}
        | h_p|^2=\sum_{h,k=1}^{2n}Z_h(\vh_k)Z_k(\vh_h)+4(n-1)(T d^\hh)^2.
    \end{equation*}
Moreover, if $\vh|_S$ is extended as in \eqref{normcondist}, then
\begin{equation*}
    | h_p|^2=\sum_{i,j=1}^{2n}\left(Z_i\vh_j(p)\right)^2-4(T d^\hh)^2.
\end{equation*}
    \end{proposition}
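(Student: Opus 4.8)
The plan is to pass to the full $2n\times 2n$ matrix attached to $\vh$ on $\hhh_p$, read off $|h_p|^2$ as a block quantity, and then carefully account for the single row and column that distinguish this ambient matrix from the intrinsic $(2n-1)$-dimensional form $h_p$.

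First I would express $h_p$ through the flat connection. Writing $\vh=\sum_{k=1}^{2n}\vh_kZ_k$ and using $\nabla_{Z_i}Z_j=0$ from \eqref{phflat}, one gets $\nabla_X\vh=\sum_k(X\vh_k)Z_k$ for every horizontal $X$, whence
\[
h_p(X,Y)=\langle\nabla_X\vh,Y\rangle=\sum_{i,k=1}^{2n}X_iY_k\,Z_i(\vh_k)
\]
for $X,Y\in\hhh_p$. Thus the bilinear extension of $h$ to $\hhh_p$ is represented, in the orthonormal basis $Z_1,\dots,Z_{2n}$, by the matrix $A=(Z_i\vh_j)_{i,j}$. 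Completing an orthonormal basis $e_1,\dots,e_{2n-1}$ of $\hhh T_pS$ by $e_{2n}:=\vh$ gives an orthonormal basis of $\hhh_p$; let $B$ be the matrix of $h$ in this basis. Since the change of basis is orthogonal, both $\sum_{i,j}(Z_i\vh_j)^2=\operatorname{tr}(AA^T)$ and $\sum_{h,k}Z_h(\vh_k)Z_k(\vh_h)=\operatorname{tr}(A^2)$ are unchanged when passed to $B$.

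The key reduction is that the last column of $B$ vanishes: for $\alpha\leq 2n-1$, metric compatibility of $\nabla$ together with $|\vh|\equiv1$ gives $B_{\alpha,2n}=\langle\nabla_{e_\alpha}\vh,\vh\rangle=\tfrac12 e_\alpha|\vh|^2=0$, and likewise $B_{2n,2n}=0$. The last row is $B_{2n,\beta}=\langle\nabla_\vh\vh,e_\beta\rangle$; since $\nabla_\vh\vh\perp\vh$ it lies in $\hhh T_pS$, and with the extension \eqref{normcondist} identity \eqref{propvh5} yields $\nabla_\vh\vh=-2\,\td\,J(\vh)$, hence $|\nabla_\vh\vh|^2=4(\td)^2$. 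Removing from $\operatorname{tr}(BB^T)=\sum_{i,j}(Z_i\vh_j)^2$ the contributions of the last row and column immediately gives the second formula $|h_p|^2=\sum_{i,j}(Z_i\vh_j)^2-4(\td)^2$; moreover, because the vanishing last column kills all cross terms, $\operatorname{tr}(B^2)=\operatorname{tr}(h_p^2)$, so $\sum_{h,k}Z_h(\vh_k)Z_k(\vh_h)=\operatorname{tr}(h_p^2)$, an identity valid for any unitary extension.

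For the first formula I would split $h_p=\sigma+\omega$ into symmetric and antisymmetric parts, so that $|h_p|^2=\operatorname{tr}(\sigma^2)-\operatorname{tr}(\omega^2)$ and $\operatorname{tr}(h_p^2)=\operatorname{tr}(\sigma^2)+\operatorname{tr}(\omega^2)$, whence $|h_p|^2=\operatorname{tr}(h_p^2)+2\|\omega\|^2$. It then remains to show $\|\omega\|^2=2(n-1)(\td)^2$. The antisymmetric part of $A$ is computed from \eqref{propvh2}, which forces $Z_i(\vh_j)=Z_j(\vh_i)$ unless $|i-j|=n$, and from \eqref{propvh3}, which gives $Z_k(\vh_{n+k})-Z_{n+k}(\vh_k)=-2\,\td$; hence $\tfrac12(A-A^T)$ equals $\td$ times the matrix of $J$, with squared Frobenius norm $2n(\td)^2$. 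Transporting to the $B$-basis (orthogonal conjugation preserves antisymmetry and the Frobenius norm) and subtracting the last row/column contribution, which equals $\tfrac12|\nabla_\vh\vh|^2=2(\td)^2$, gives $\|\omega\|^2=2n(\td)^2-2(\td)^2=2(n-1)(\td)^2$, and the first formula follows; as all three of its terms are intrinsic, it holds for every unitary extension.

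The main obstacle is the bookkeeping of the extra row and column separating the ambient matrix $A$ from the intrinsic form $h_p$: one must identify $\nabla_\vh\vh$ via \eqref{propvh5}, verify the vanishing of the last column through metric compatibility, and then correctly extract the Frobenius norm of the antisymmetric block from that of the full antisymmetric matrix $\td\,J$. The symmetric/antisymmetric identity $|h_p|^2=\operatorname{tr}(h_p^2)+2\|\omega\|^2$ is what converts the trace-type quantity $\operatorname{tr}(A^2)$ into the sum of squares $|h_p|^2$, and it is the torsion relation \eqref{propvh3} (ultimately \eqref{pseudotorsion}) that injects the curvature term $(\td)^2$.
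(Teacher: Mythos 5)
Your proof is correct and, in substance, it is the same computation as the paper's: both rest on \eqref{propvh1} (the column $h(\cdot,\vh)$ vanishes for any unitary extension), \eqref{propvh5} (the row $\langle\nabla_{\vh}\vh,\cdot\rangle$ contributes exactly $4(Td^{\hh})^2$), and \eqref{propvh2}--\eqref{propvh3} (the antisymmetric part of $(Z_i\vh_j)$ is $Td^{\hh}$ times the matrix of $J$, of squared norm $2n(Td^{\hh})^2$), and both compute with the extension \eqref{normcondist} and then observe that every term in the resulting identities is extension-independent. The differences are purely presentational: where the paper expands $|h_p|^2$ in indices through the projector $\delta_{hk}-\vh_h\vh_k$ and compares $\sum_{h,k}Z_h(\vh_k)Z_k(\vh_h)$ with $\sum_{h,k}(Z_h\vh_k)^2$ directly, you encode the same cancellations as orthogonal invariance of $\operatorname{tr}(AA^T)$ and $\operatorname{tr}(A^2)$ together with $|h_p|^2=\operatorname{tr}(h_p^2)+2\|\omega\|^2$, and your remark that $\operatorname{tr}(B^2)=\operatorname{tr}(h_p^2)$ gives a slightly cleaner route to the extension-independence of $\sum_{h,k}Z_h(\vh_k)Z_k(\vh_h)$ than the paper's argument via tangential derivatives.
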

    \begin{proof}
    First, we show that the quantity $\sum_{h,k=1}^{2n}Z_h(\vh_k)Z_k(\vh_h)$ does not depend on the chosen unitary $C^2$ extension of $\vh|_S$. Indeed, in view of \eqref{propvh1}, we have that
    \begin{equation*}
    \begin{split}
\sum_{h,k=1}^{2n}Z_h(\vh_k)Z_k(\vh_h)&=\sum_{h,k=1}^{2n}\delta_h(\vh_k)Z_k(\vh_h)+\sum_{h,k=1}^{2n}\vh_h\langle \nabla_\hh\vh_k,\vh\rangle Z_k(\vh_h)\\
&=\sum_{h,k=1}^{2n}\delta_h(\vh_k)\delta_k(\vh_h)+\sum_{h,k=1}^{2n}\delta_h(\vh_k)\vh_k\langle\nabla_\hh\vh_h,\vh\rangle\\
&=\sum_{h,k=1}^{2n}\delta_h(\vh_k)\delta_k(\vh_h)+\sum_{h,k=1}^{2n}Z_h(\vh_k)\vh_k\langle\nabla_\hh\vh_h,\vh\rangle-\left(\sum_{h,k=1}^{2n}\vh_h\langle\nabla_\hh\vh_h,\vh\rangle\right)^2\\
&=\sum_{h,k=1}^{2n}\delta_h(\vh_k)\delta_k(\vh_h).
    \end{split}
    \end{equation*}
The claim then follows recalling that the horizontal tangential derivatives do not depend on the chosen extension. Let us extend $\vh$ as in \eqref{normcondist}. Let $e_1,\ldots,e_{2n-1}$ be an orthonormal basis of $\hhh T_pS$. For any $i=1,\ldots,2n-1$, we let $a_i^1,\ldots,a_i^{2n}$ be such that
        \begin{equation*}
            e_i=\sum_{j=1}^{2n}a_i^jZ_j.
        \end{equation*}
        Then, by construction,
        \begin{equation}\label{coeffequations}
            \sum_{k=1}^{2n}a_i^ka_j^k=\delta_{ij},\qquad\sum_{k=1}^{2n}a_i^k\vh_k=0\qquad\text{and}\qquad\sum_{l=1}^{2n-1}e_k^le_k^m=\delta_{km}-\vh_k\vh_m
        \end{equation}
        for any $i,j=1,\ldots,2n-1$ and any $l,m=1,\ldots,2n$. Hence, recalling \eqref{propvh1} and \eqref{propvh5},
        \begin{equation*}
            \begin{split}
    |h_p|^2             &=\sum_{i,j=1}^{2n-1}\sum_{h,k,l,m=1}^{2n}a_i^hZ_h(\vh_k)a_j^ka_i^lZ_l(\vh_m)a_j^m\\
&=\sum_{h,k,l,m=1}^{2n}Z_h(\vh_k)Z_l(\vh_m)(\delta_{hl}-\vh_h\vh_l)(\delta_{km}-\vh_k\vh_m)\\
&=\sum_{h,k=1}^{2n}\left(Z_h(\vh_k)\right)^2-\sum_{k=1}^{2n}\left(\sum_{h=1}^{2n}Z_h(\vh_k)\vh_h\right)^2\\
&=\sum_{i,j=1}^{2n}\left(Z_i\vh_j(p)\right)^2-4(T d^\hh)^2.
            \end{split}
        \end{equation*}
        To prove the second identity, notice that

    \begin{equation*}
    \begin{split}
        \sum_{h,k=1}^{2n}Z_h(\vh_k)Z_k(\vh_h)&
%=\sum_{h,k=1}^{n}\left(X_h(\vh_k)X_k(\vh_h)+\sum_{h,k=1}^{n}X_h(\vh_{n+k})Y_k(\vh_h)+\sum_{h,k=1}^{n}Y_h(\vh_k)X_k(\vh_{n+h})+\sum_{h,k=1}^{n}Y_h(\vh_{n+k})Y_k(\vh_{n+h})\right)\\
=\sum_{h,k=1}^{2n}\left(Z_h(\vh_k)\right)^2+2Td^\hh\sum_{h,k=1}^nX_h(\vh_{n+k})-2Td^\hh\sum_{h,k=1}^nY_h(\vh_k)\\
        &=\sum_{h,k=1}^{2n}\left(Z_h(\vh_k)\right)^2+2Td^\hh\sum_{k=1}^n[X_k,Y_k]d^\hh\\
        &=\sum_{h,k=1}^{2n}\left(Z_h(\vh_k)\right)^2-4n(Td^\hh)^2
    \end{split}
    \end{equation*}
    \end{proof}

In the first Heisenberg group $\hh^1$, $\hhh TS$ is a one dimensional distribution generated by $J(\vh)$. In particular (cf. \cite{MR2435652}), $h$ completely determines the behavior of $\nabla_{J(\vh)}J(\vh)$, meaning that
\begin{equation*}
    \nabla_{J(\vh)}J(\vh)=-h(J(\vh),J(\vh))\vh.
\end{equation*}
This consideration is a first crucial step in the study of minimal surfaces, since it allows to infer that, when $H=0$, then $S$ is ruled by horizontal lines. A horizontal line is a horizontal curve $\Gamma:I\longrightarrow\hn$ such that 
\begin{equation*}
\langle\Ddot\Gamma(s),Z_j|_{\Gamma(s)}\rangle=0    
\end{equation*}
for any $s\in I$ and any $j=1,\ldots,2n$, where here and in the following $I$ is a domain of $\rr$ containing $0$.
Indeed the following simple characterization holds. 
\begin{proposition}\label{charinhn}
     Let $\Gamma:I\longrightarrow\hh^n$ be a horizontal curve. The following are equivalent. 
    \begin{itemize}
        \item [$(i)$] $\nabla_{\Dot\Gamma} \Dot\Gamma=0$ along $\Gamma$.
        \item [$(ii)$]$\Gamma$ is a horizontal line.
    \end{itemize}
\end{proposition}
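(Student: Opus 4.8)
The plan is to show directly that, for a horizontal curve $\Gamma$, the pseudohermitian acceleration $\nabla_{\Dot\Gamma}\Dot\Gamma$ coincides with the Euclidean acceleration $\Ddot\Gamma$, after which the equivalence becomes immediate. Since $\Gamma$ is horizontal, I would first write
\begin{equation*}
    \Dot\Gamma(s)=\sum_{k=1}^{2n}\gamma_k(s)Z_k|_{\Gamma(s)}
\end{equation*}
for suitable functions $\gamma_1,\ldots,\gamma_{2n}$. Differentiating along $\Gamma$ by means of the Leibniz rule for $\nabla$ and exploiting the flatness relation \eqref{phflat}, namely $\nabla_{Z_i}Z_j=0$, each term $\gamma_k\nabla_{\Dot\Gamma}Z_k=\gamma_k\sum_i\gamma_i\nabla_{Z_i}Z_k$ vanishes, leaving
\begin{equation*}
    \nabla_{\Dot\Gamma}\Dot\Gamma=\sum_{k=1}^{2n}\Dot\gamma_k\,Z_k|_{\Gamma}.
\end{equation*}
In particular $\nabla_{\Dot\Gamma}\Dot\Gamma$ is horizontal, so that $(i)$ is equivalent to $\Dot\gamma_k\equiv 0$ for every $k=1,\ldots,2n$.

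The second step is to compute the Euclidean acceleration $\Ddot\Gamma$ and to express it in the left-invariant frame $X_1,\ldots,Y_n,T$. Writing the horizontality constraint in coordinates gives $\Dot\Gamma^t=\sum_j\gamma_j y_j-\sum_j\gamma_{n+j}x_j$, and I would differentiate this, using $\Dot x_j=\gamma_j$ and $\Dot y_j=\gamma_{n+j}$, to obtain the $t$-component of $\Ddot\Gamma$. Converting the coordinate vector fields through $\partial_{x_j}=X_j-y_jT$, $\partial_{y_j}=Y_j+x_jT$ and $\partial_t=T$, the decisive point is that the resulting coefficient of $T$ cancels identically, so that $\Ddot\Gamma$ is itself horizontal and
\begin{equation*}
    \Ddot\Gamma=\sum_{k=1}^{2n}\Dot\gamma_k\,Z_k|_{\Gamma}=\nabla_{\Dot\Gamma}\Dot\Gamma.
\end{equation*}
I expect this cancellation --- which encodes the compatibility between the Euclidean second derivative and the pseudohermitian connection along horizontal curves --- to be the only genuine computation in the argument, and hence the main (though mild) obstacle.

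With the identity $\Ddot\Gamma=\nabla_{\Dot\Gamma}\Dot\Gamma$ in hand, the conclusion is immediate. Since $\Ddot\Gamma=\sum_k\Dot\gamma_k Z_k$ is horizontal and $Z_1,\ldots,Z_{2n}$ are orthonormal, one has $\langle\Ddot\Gamma,Z_j|_{\Gamma}\rangle=\Dot\gamma_j$ for each $j=1,\ldots,2n$, and in particular the pairing is well defined. Therefore $(i)$, i.e. $\nabla_{\Dot\Gamma}\Dot\Gamma=0$, holds if and only if $\Dot\gamma_j\equiv 0$ for all $j$, which is exactly the requirement $\langle\Ddot\Gamma,Z_j\rangle=0$ defining a horizontal line, that is $(ii)$. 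This closes the equivalence.
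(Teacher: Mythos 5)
Your proposal is correct and follows essentially the same route as the paper: both arguments expand $\Dot\Gamma$ in the left-invariant horizontal frame and use the flatness $\nabla_{Z_i}Z_j=0$ to reduce $(i)$ to the constancy of the frame coefficients, which is then identified with condition $(ii)$. The only difference is that you explicitly verify the cancellation of the $T$-component of $\Ddot\Gamma$ (so that $\Ddot\Gamma=\nabla_{\Dot\Gamma}\Dot\Gamma$), a step the paper's proof leaves implicit in the assertion that being a horizontal line is equivalent to $t\mapsto A_j(\Gamma(t))$ being constant.
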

\begin{proof}
    Let $A=\sum_{j=1}^{2n}A_jZ_j$ be any $C^2$ extension of $\Dot\Gamma$. $\Gamma$ is a horizontal line if and only if $t\mapsto A_j(\Gamma(t))$ is constant on $I$ for any $j=1,\ldots,2n$.
    Notice that
    \begin{equation*}
        \nabla_A A\big |_{\Gamma(s)}=\sum_{k=1}^{2n}A(A_k)|_{\Gamma(s)} Z_k|_{\Gamma(s)}=\sum_{k=1}^{2n}\Dot\Gamma(A_k)|_{\Gamma(s)} Z_k|_{\Gamma(s)}=\sum_{k=1}^{2n}\frac{d(A_k(\Gamma(t))}{dt}\Big|_{s}Z_k|_{\Gamma(s)}
    \end{equation*}
    for any $s\in I$. The thesis then follows.
\end{proof}
\begin{comment}
    We are going to show that even in the higher dimensional case it is possible do derive information on the curve from the vanishing of the second fundamental form. More precisely, the following characterization holds.
\begin{theorem}\label{main1}
    Let $S$ be a non-characteristic hypersurface of class $C^2$. Let $A\in HTS$, let $p\in S$ and let $\gamma:I\longrightarrow S$ be its integral curve starting from $p$. The following are equivalent.
    \begin{itemize}
        \item [$(i)$] $h(A,A)=0$ along $\gamma$.
        \item [$(ii)$] $\gamma$ is a horizontal line. 
    \end{itemize}
    In this case, $I=\rr$.
\end{theorem}
Theorem \Cref{main1} is particularly interesting in its application to the following situation.
\end{comment}
The higher dimensional case is typically more involved, since it is not always true that
\begin{equation*}
    \nabla_X X=-h(X,X)\vh.
\end{equation*}
 Nevertheless, there is a particular situation in which the second fundamental form provides global information. 
\begin{definition}\label{htgdef}
    Let $S$ be a hypersurface of class $C^2$. We say that $S$ is \emph{horizontally totally geodesic} when 
\begin{equation}\label{htg}
    h(X,X)=0
\end{equation}
for any $X\in C^1(S,\hhh TS)$, that is when $h$ is an \emph{alternating} bilinear form.
\end{definition}
Notice that \eqref{htg} is equivalent to require that the symmetric part of $h$ is identically vanishing. Let us denote the latter by $\tilde h$, that is
\begin{equation*}
    \tilde h(X,Y)=\frac{h(X,Y)+h(Y,X)}{2}
\end{equation*}
For any $X,Y\in C^1(S,\hhh TS)$. This kind of symmetric second fundamental form has already been considered, although through different but equivalent definitions, by several authors (cf. e.g. \cite{MR2898770,MR2354992,MR4193432}). It is clear that
\begin{equation*}
    h_p=0\implies \tilde h_p=0
\end{equation*}
for any non-characteristic point $p\in S$, while the converse implication may be false in general.
More precisely, $|h|$ and $|\tilde h|$ can be related in the following way.
\begin{proposition}\label{normoftildeh}
    Let $p$ be a non-characteristic point of $S$. Let $\vh$ be any unitary $C^2$ extension of $\vh|_S$. Then 
    \begin{equation*}
       |\tilde h_p|^2=\sum_{h,k=1}^{2n}Z_h(\vh_k)Z_k(\vh_h)+2(n-1)\left(Td^\hh(p)\right)^2.
    \end{equation*}
    Moreover,
    \begin{equation*}
        |h_p|^2=|\tilde h_p|^2+2(n-1)\left(Td^\hh(p)\right)^2.
    \end{equation*}
    Finally, if $\vh$ is extended as in \eqref{normcondist}, then
    \begin{equation*}
        |\tilde h_p|^2=\sum_{h,k=1}^{2n}\left(Z_h\vh_k(p)\right)^2-2(n+1)\left(Td^\hh(p)\right)^2
    \end{equation*}
    \end{proposition}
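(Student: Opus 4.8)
The three identities are not independent, so the plan is to prove only the first one and then deduce the other two algebraically. Observe that $|h_p|^2$ and $|\tilde h_p|^2$ are intrinsic to $S$, that $\sum_{h,k=1}^{2n}Z_h(\vh_k)Z_k(\vh_h)$ was already shown in the proof of \Cref{norofh} to be independent of the chosen unitary $C^2$ extension, and that $Td^\hh$ depends only on $S$ through \eqref{propvh4}. Hence every quantity appearing is extension-independent, and it suffices to establish the first formula for the convenient extension \eqref{normcondist}. Granting it, subtracting it from the first formula of \Cref{norofh} immediately yields the relation $|h_p|^2=|\tilde h_p|^2+2(n-1)(Td^\hh)^2$, which is the second identity; substituting the second formula of \Cref{norofh} (valid for \eqref{normcondist}) into $|\tilde h_p|^2=|h_p|^2-2(n-1)(Td^\hh)^2$ then gives the third, since $4+2(n-1)=2(n+1)$.

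The core of the argument is the orthogonal decomposition of $h_p$ into its symmetric and antisymmetric parts. Setting $h^a(X,Y):=\tfrac12\bigl(h_p(X,Y)-h_p(Y,X)\bigr)$, the orthogonality of symmetric and antisymmetric matrices in the Frobenius inner product on $\hhh T_pS$ gives $|h_p|^2=|\tilde h_p|^2+|h^a|^2$. Thus the whole statement reduces to computing $|h^a|^2$ and comparing with the first formula of \Cref{norofh}. To identify $h^a$, I would use the flatness \eqref{phflat} to write, for horizontal $X=\sum_h X^hZ_h$ and $Y=\sum_k Y^kZ_k$, the tensorial expression $h_p(X,Y)=\sum_{h,k}X^hY^k Z_h(\vh_k)$, so that $h^a(X,Y)=\tfrac12\sum_{h,k}X^hY^k\bigl(Z_h(\vh_k)-Z_k(\vh_h)\bigr)$.

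Next I would pin down the antisymmetric coefficient matrix. With the extension \eqref{normcondist}, relation \eqref{propvh2} forces $Z_h(\vh_k)-Z_k(\vh_h)=0$ whenever $|h-k|\neq n$, while \eqref{propvh3} shows that the surviving entries equal $\pm 2Td^\hh$, with exactly the sign pattern of the standard symplectic pairing $\langle J(\cdot),\cdot\rangle$. Consequently $h^a=\mp\,Td^\hh\,\langle J(\cdot),\cdot\rangle$ on $\hhh T_pS$, and it remains to compute the squared norm of the symplectic form restricted to the $(2n-1)$-dimensional space $\hhh T_pS=(\vh)^\perp\cap\hhh_p$. For an orthonormal basis $e_1,\dots,e_{2n-1}$ of $\hhh T_pS$ I would write $\sum_{i,j}\langle Je_i,e_j\rangle^2=\sum_i|P(Je_i)|^2$, where $P$ denotes orthogonal projection of $\hhh_p$ onto $(\vh)^\perp$; since $|Je_i|=1$ this equals $(2n-1)-\sum_i\langle Je_i,\vh\rangle^2=(2n-1)-|P(J\vh)|^2=(2n-1)-1=2(n-1)$, using that $J$ is an antisymmetric isometry of $\hhh_p$, so $J\vh\perp\vh$ and $|J\vh|=1$. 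Therefore $|h^a|^2=2(n-1)(Td^\hh)^2$, and combining with $|h_p|^2=\sum_{h,k}Z_h(\vh_k)Z_k(\vh_h)+4(n-1)(Td^\hh)^2$ from \Cref{norofh} gives the first identity.

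I expect the main obstacle to be the clean identification of $h^a$ with the symplectic form carried out above: this is precisely where the special structure of the extension \eqref{normcondist} enters, through \eqref{propvh2} and \eqref{propvh3}, and where the factor $(n-1)$ originates. This factor is exactly what distinguishes $\hh^n$ from $\hh^1$, in which $h^a\equiv 0$ and hence $h=\tilde h$. (A purely computational alternative, avoiding the symplectic interpretation, is to expand $|\tilde h_p|^2=\tfrac12|h_p|^2+\tfrac12\sum_{i,j}h_p(e_i,e_j)h_p(e_j,e_i)$ and use the completeness relations \eqref{coeffequations} together with \eqref{propvh1} to evaluate the cross term $\sum_{i,j}h_p(e_i,e_j)h_p(e_j,e_i)=\sum_{h,k}Z_h(\vh_k)Z_k(\vh_h)$; either route reaches the same conclusion.)
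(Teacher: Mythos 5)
Your proof is correct, but the key computation is carried out by a genuinely different route than the paper's. The paper writes $|\tilde h_p|^2=\tau(\tilde h_p\tilde h_p^T)=\tfrac12|h_p|^2+\tfrac12\tau(h_p^2)$ and evaluates the cross term $\tau(h_p^2)=\sum_{h,k}Z_h(\vh_k)Z_k(\vh_h)$ by an index computation with the coefficients $a_i^j$, using only the completeness relations \eqref{coeffequations} and \eqref{propvh1} — this is exactly the ``purely computational alternative'' you mention in your closing parenthesis, and it has the advantage of working for an arbitrary unitary extension without further comment. Your main argument instead uses the Frobenius-orthogonal splitting $|h_p|^2=|\tilde h_p|^2+|h^a|^2$ and identifies $h^a$ as $-Td^\hh\,\langle J(\cdot),\cdot\rangle$ restricted to $\hhh T_pS$ via \eqref{propvh2}--\eqref{propvh3}, then computes the squared norm of the restricted symplectic form to be $2(n-1)$; I checked the projection argument ($(2n-1)-|P(J\vh)|^2=2(n-1)$ since $J\vh\in\hhh T_pS$) and the deduction of the second and third identities from \Cref{norofh}, and all the constants come out right. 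What your route buys is a transparent geometric origin for the factor $2(n-1)$ and an immediate explanation of why $h=\tilde h$ in $\hh^1$; the price is that the identification of $h^a$ requires the special extension \eqref{normcondist}, which you correctly compensate for by observing at the outset that every quantity in the first identity is extension-independent. Both arguments are sound; yours is slightly longer but more illuminating.
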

    \begin{proof}
        Let $e_1,\ldots,e_{2n-1}$ be as in the proof of \Cref{norofh}. Notice that
\begin{equation*}
    |\tilde h_p|^2=\tau\left(\tilde h_p\cdot\tilde h_p^T\right)=\tau\left(\frac{(h_p+h_p^T)^2}{4}\right)=\frac{1}{2}|h_p|^2+\frac{1}{2}\tau\left(h_p^2\right).
\end{equation*}
Arguing as in the proof of \Cref{norofh},
        \begin{equation*}
            \begin{split}
    \tau\left(h_p^2\right)&=\sum_{i,j=1}^{2n-1}\sum_{h,k,l,m=1}^{2n}a_i^hZ_h(\vh_k)a_j^ka_j^lZ_l(\vh_m)a_i^m\\
              &=\sum_{h,k,l,m=1}^{2n}Z_h(\vh_k)Z_l(\vh_m)\sum_{i=1}^{2n-1}a_i^ha_i^m\sum_{j=1}^{2n-1}a_j^ka_j^l\\
&=\sum_{h,k,l,m=1}^{2n}Z_h(\vh_k)Z_l(\vh_m)(\delta_{hm}-\vh_h\vh_m)(\delta_{kl}-\vh_k\vh_l)\\
&=\sum_{h,k=1}^{2n}Z_h(\vh_k)Z_k(\vh_h).
            \end{split}
        \end{equation*}
        Exploiting \Cref{norofh}, the thesis follows.
    \end{proof}
    In view of \Cref{norofh} and \Cref{normoftildeh}, non-characteristic hypersurfaces of class $C^2$ with $h\equiv 0$ are trivially vertical hyperplanes, provided that $n\geq 2$. Indeed, if $S$ is such a hypersurface, $N$ is its Euclidean unit normal and $\vh$ its horizontal unit normal, then \Cref{normoftildeh} and \eqref{propvh4} imply that $\tilde h\equiv 0$, $N_{2n+1}\equiv 0$, $N=N(\bar x,\bar y)$ and $\vh=(N_1,\ldots,N_{2n})$. Hence
    \begin{equation*}
        0=|\tilde h|^2=\sum_{i,j=1}^{2n}Z_i\vh_jZ_j\vh_i=\sum_{i,j=1}^{2n+1}\frac{\partial N_j}{\partial z_i}\frac{\partial N_i}{\partial z_j},
    \end{equation*}
    where the last term coincides with the squared norm of the Euclidean second fundamental form of $S$. Hence $S$ is a hyperplane, which is vertical since $N_{2n+1}\equiv 0$.
    As already mentioned, when $n\geq 2$ it is not in general true that $\tilde h=0$ implies $h=0$.
    \begin{example}\label{esempio}
            As an instance, consider in $\hh^2$ the non-vertical hyperplane
    \begin{equation*}
        S:=\{(\bar x,\bar y,t)\in\hh^2\,:\,a_1x_1+a_2x_2+b_1y_1+b_2y_2+t+d=0\}
    \end{equation*}
    for some $a_1,a_2,b_1,b_2,d\in\rr$. An easy computation shows that 
    \begin{equation*}
        \no(p)=\frac{(a_1,a_2,b_1,b_2,1)}{\sqrt{1+a_1^2+a_2^2+b_1^2+b_2^2}}\qquad\text{and}\qquad\noh(p)=\frac{(a_1+y_1,a_2+y_2,b_1-x_1,b_2-x_2)}{\sqrt{1+a_1^2+a_2^2+b_1^2+b_2^2}}
    \end{equation*}
    for any $p\in S$. Therefore, $S$ has a unique characteristic point $p_0=(b_1,b_2,-a_1,-a_2,-d)$. Far from $p_0$, $\vh$ can be expressed by
    \begin{equation*}
        \vh(p)=\frac{(a_1+y_1,a_2+y_2,b_1-x_1,b_2-x_2)}{\sqrt{(a_1+y_1)^2+(a_2+y_2)^2+(b_1-x_1)^2+(b_2-x_2)^2}}
    \end{equation*}
    for any $p\in S\setminus S_0$.
    \end{example}
Recalling \eqref{propvh4}, a tedious but simple computations shows that
\begin{equation*}
\sum_{h,k=1}^4Z_h(\vh_k)Z_k(\vh_h)=-\frac{2}{(a_1+y_1)^2+(a_2+y_2)^2+(b_1-x_1)^2+(b_2-x_2)^2}=-2(Td^\hh)^2.
\end{equation*}
Hence, \Cref{normoftildeh} implies that $\tilde h\equiv 0$ on $S\setminus S_0$. Nevertheless, in view of the previous computation and \Cref{norofh}, we conclude that
\begin{equation*}
    |h_p|^2=\frac{2}{(a_1+y_1)^2+(a_2+y_2)^2+(b_1-x_1)^2+(b_2-x_2)^2}
\end{equation*}
for any $p\in S\setminus S_0$.

\section{Local existence of geodesics on hypersurfaces} \label{geosection}
Let $S$ be a hypersurface of class $C^2$. Let $p\in S\setminus S_0$ and $w\in \hhh T_pS$. We wish to find a curve $\Gamma\in C^2(I,S)$ solving the differential problem
\begin{equation}\label{geoS}
    \left\{
\begin{aligned}
&\Gamma\text{ is horizontal}\\
&\nabla^S_{\Dot{\Gamma}}\Dot{\Gamma}=0&&\text{on }I\\
&
\Gamma(0)=p\\
&\Dot{\Gamma}(0)=w
\end{aligned}
\right.
\end{equation}
Arguing for instance as in \cite{MR2609016}, it is not difficult to show that solutions to \eqref{geoS} are geodesics in the Carnot-Carathéodory space associated with the sub-Riemannian structure $(S,\langle\cdot,\cdot\rangle_S)$. 
First, notice that, by means of \cite[Theorem 6.5]{MR1871966} and \cite[Theorem 1.2]{MR2223801} and without loss of generality, there exists $\Om\subseteq\rr^{2n}$ and $\varphi\in C(\Om)$ such that $\nabla^\varphi\varphi\in C(\Om,\rr^{2n-1})$, $U=i(\Om)\cdot j(\rr)$ is an open neighborhood of $p$ and 
\begin{equation*}
    S\cap U=\graf_{Y_1}(\varphi,\Om)\cap U.
\end{equation*}
We need the following lemma. 
\begin{proposition}\label{fireg}
Let $\varphi\in C(\Om)$ be such that $\nabla^\varphi\varphi\in C(\Om,\rr^{2n-1})$. Assume that $\graf_{Y_1}(\varphi,\Om)$ is a non-characteristic hypersurface of class $C^2$. Then  $\varphi\in C^{2}(\Om)$.
\end{proposition}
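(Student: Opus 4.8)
The plan is to show that the smooth projection $\Pi$ restricts to a $C^2$-diffeomorphism of $S$ onto $\Om$; since $\varphi = y_1\circ\Psi$ and $\Psi = (\Pi|_S)^{-1}$, this immediately yields $\varphi\in C^2(\Om)$. Recall from the construction of intrinsic graphs that $\Pi\circ\Psi = \mathrm{id}_\Om$ and $\Psi\circ\Pi = \mathrm{id}_S$, that $\Pi$ is the smooth polynomial map $\Pi(\bar x,\bar y,t) = (\bar x,\tilde y,t+x_1y_1)$, and that $\Psi$ is continuous because $\varphi$ is. As $S$ is of class $C^2$ and $\Pi$ is smooth, the restriction $\Pi|_S\colon S\scu\Om$ is of class $C^2$; being a continuous bijection with continuous inverse $\Psi$, it is in particular a homeomorphism. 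Thus the whole statement reduces to checking that $d(\Pi|_S)_p$ is invertible at every $p\in S$, for then the inverse function theorem upgrades $\Psi$ to a map of class $C^2$.

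First I would compute the kernel of $d\Pi_p$. A direct calculation gives $\ker d\Pi_p = \spann\{Y_1|_p\}$ for every $p$, so that $d(\Pi|_S)_p = d\Pi_p|_{T_pS}$ is injective (hence, by equality of dimensions, an isomorphism) precisely when $Y_1|_p\notin T_pS$. The heart of the argument is therefore to prove that $Y_1$ is everywhere transverse to $S$. Here is where the continuity of $\nabla^\varphi\varphi$ enters: since $W^\varphi\varphi,\tilde X_2\varphi,\dots,\tilde Y_n\varphi$ are continuous, $\varphi$ is classically differentiable along each of the continuous vector fields $W^\varphi,\tilde X_j,\tilde Y_j$, by integrating the continuous directional derivative along the corresponding integral curves. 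Consequently $\Psi$ is differentiable along these $2n-1$ directions, and the resulting derivatives are exactly the vector fields $E_1,\dots,E_n,F_2,\dots,F_n$ introduced above; being derivatives of curves contained in $S$, each of them is tangent to $S$, and a short computation confirms they are horizontal. They are moreover linearly independent, so by the non-characteristic assumption (which forces $\dim\hhh T_p S = 2n-1$) they span $\hhh T_p S$ at every $p$.

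It then remains to note that $Y_1$ does not belong to this span. Projecting onto the $(x_1,\dots,x_n,y_2,\dots,y_n)$-coordinates sends $E_1,\dots,E_n,F_2,\dots,F_n$ to the standard basis of $\rr^{2n-1}$ while sending $Y_1$ to $0$; hence any linear relation expressing $Y_1$ through $E_1,\dots,F_n$ would be trivial, and $Y_1\notin\spann\{E_1,\dots,E_n,F_2,\dots,F_n\} = \hhh T_p S$. Since $Y_1$ is a horizontal field, $Y_1\in T_pS$ would force $Y_1\in\hhh_p\cap T_pS = \hhh T_p S$, which we just excluded; therefore $Y_1|_p\notin T_pS$ for every $p\in S$. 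By the second paragraph $\Pi|_S$ is then a $C^2$ local diffeomorphism, and being a homeomorphism it is a global $C^2$-diffeomorphism, whence $\varphi = y_1\circ(\Pi|_S)^{-1}\in C^2(\Om)$.

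I expect the only genuinely delicate point to be the assertion that the continuity of $\nabla^\varphi\varphi$ produces honest tangent vectors to $S$. The subtlety is that $\nabla^\varphi\varphi$ carries no information on the $\tilde T=\partial_\tau$-derivative of $\varphi$, so one cannot hope to prove the full Euclidean $C^1$-regularity of $\varphi$ directly, nor to exclude a vertical cusp of $S$ by a one-dimensional slice argument: within a single fibre $\{\bar x,\tilde y=\mathrm{const}\}$ nothing a priori prevents $\tau^{1/3}$-type behavior. The decisive observation is that one never needs to control the $\tau$-direction, because $Y_1$ is itself horizontal, so its transversality to $S$ can be read off entirely from $\hhh T_p S$, which is exactly what continuity of the intrinsic gradient controls. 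Making rigorous the differentiability of $\varphi$ along $W^\varphi$ — a vector field that itself depends on $\varphi$ — is the remaining technical step, handled by the standard fact that a continuous function with continuous distributional derivative along a continuous vector field is classically differentiable along it.
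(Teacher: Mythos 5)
Your overall architecture is the same as the paper's: both arguments reduce the statement to the transversality $Y_1|_p\notin T_pS$ at every $p\in S$ and then conclude by the inverse/implicit function theorem. (The paper applies the shear $g(\bar x,\bar y,t)=(\bar x,\bar y,t-x_1y_1)$, under which the intrinsic graph becomes the Euclidean $y_1$-graph of $\varphi$, and notes that the obstruction to the implicit function theorem is exactly $(dg)|_p(Y_1|_p)$ being tangent; your computation $\ker d\Pi_p=\spann\{Y_1|_p\}$ is the same observation in different coordinates.) You also correctly identify where the hypothesis on $\nabla^\varphi\varphi$ must enter, since the transversality is genuinely false without it.

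The gap is in how you close that step. The paper excludes $Y_1|_p\in T_pS$ by citing \cite[Theorem~1.2]{MR2223801}: for an intrinsic $Y_1$-graph with continuous intrinsic gradient the horizontal normal is given by \eqref{normygraph}, so $(\vh)_{n+1}=-W^{-1/2}\neq 0$, while $Y_1|_p\in T_pS$ together with \eqref{hornormnonchar} would force $(\vh(p))_{n+1}=0$. You instead derive the tangency of $E_1,\dots,E_n,F_2,\dots,F_n$ from ``the standard fact that a continuous function with continuous distributional derivative along a continuous vector field is classically differentiable along it.'' For the smooth, $\varphi$-independent fields $\tilde X_j,\tilde Y_j$ this is legitimate (straighten the field and apply the one-dimensional fundamental theorem of calculus), but for $W^\varphi=\partial_{\xi_1}+2\varphi\tilde T$ it is not a standard fact and is false as a general principle: the coefficients involve the merely continuous unknown $\varphi$, the integral curves need not be unique, and passing from the distributional identity (a Burgers-type equation $\partial_{\xi_1}\varphi+\partial_\tau(\varphi^2)=w$) to classical differentiability along integral curves is precisely the nontrivial content of the intrinsic-graph regularity theory that \cite{MR2223801} encapsulates. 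Nor can the $W^\varphi$-direction be dispensed with: take $\varphi(\bar\xi,\tilde\eta,\tau)=\xi_1^{1/3}$ on $\Om=\rr^{2n}$, whose graph is the smooth non-characteristic hypersurface $\{x_1=y_1^3\}$; all $\tilde X_j\varphi,\tilde Y_j\varphi$ vanish identically (so $E_2,\dots,F_n$ are tangent and continuous), yet $Y_1$ is tangent along $\{x_1=y_1=0\}$ and $\varphi\notin C^1$ --- consistently, $W^\varphi\varphi=\tfrac13\xi_1^{-2/3}$ is not continuous. So the step you flag as ``the only genuinely delicate point'' carries essentially the whole weight of the proposition, and it must be closed by invoking the intrinsic-graph theorem (as the paper does) or reproving it, not by the general principle you state.
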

\begin{proof}
    Let us consider the map $g:\hh^n\longrightarrow\hh^n$ defined by
    \begin{equation*}
        g(\bar x,\bar y,t)=(\bar x,\bar y,t-x_1y_1)
    \end{equation*}
    for any $(\bar x,\bar y,t)\in\mathbb H^n$. Notice that $g$ is smooth, bijective and and $\det(Dg)\equiv 1$. Hence $g$ is a smooth diffeomorphism. Let us set $\hat{S}:=g(S)$. Notice that $\hat S$ is of class $C^{2}$. It is easy to check that
    \begin{equation*}
        \hat{S}\cap g(U)=\{(\bar\xi,\varphi(\bar\xi,\tilde\eta,\tau),\tilde\eta,\tau)\,:\,(\tilde\xi,\bar\eta,\tau)\in \Om\}.
    \end{equation*}
    Therefore the thesis follows provided that $(\hat{N}(\hat{p}))_{n+1}\neq 0$ for any $\hat{p}\in\hat{S}\cap g(U)$, being $\hat{N}(\hat{p})$ the Euclidean normal to $\hat{S}$ at $\hat{p}$. Assume by contradiction that there exists $\hat{p}\in\hat{S}\cap g(U)$ such that $(\hat{N}(\hat{p}))_{n+1}=0$. This implies that $(\bar 0,1,\tilde 0,0)\in T_{\hat{p}} \hat{S}$. Let $p\in S$ be such that $g(p)=\hat{p}$.
    Noticing that
    \begin{equation*}
        (dg)|_p(Y_1|_p)=(\bar 0,1,\tilde 0,0)\in T_{\hat{p}} \hat{S},
    \end{equation*}
    we infer that $Y_1|_p\in T_p S$. Since $S$ is non-characteristic, \eqref{hornormnonchar} implies that $(\vh(p))_{n+1}=0$. On the other hand, we know from \cite[Theorem 1.2]{MR2223801} that $(\vh(p))_{n+1}\neq 0$, a contradiction.
    
\end{proof}
Therefore we reduce \eqref{geoS} to a differential problem for curves in $\Om$. To this aim, fix $q\in\Om$ such that $\Psi(q)=p$, and let $\gamma(s)=(\bar\xi(s),\tilde\eta(s),\tau(s)):I\longrightarrow\Om$. If we lift $\gamma$ to a curve $\Gamma:I\longrightarrow\hn$ by letting
\begin{equation*}
    \Gamma(s)=\Psi(\gamma(s))=(\bar\xi(s),\varphi(\gamma(s)),\tilde\eta(s)),\tau(s)-\xi_1(s)\varphi(\gamma(s)))
\end{equation*}
for any $s\in I$, then by construction $\Gamma(I)\subseteq S$. From now on, we fix the notation $\alpha(s):=\varphi(\gamma(s))$. To give a meaning to \eqref{geoS} we need that $\Dot{\Gamma}$ is horizontal. Notice that
\begin{equation*}
\begin{split}
    \Dot{\Gamma}&=(\Dot{\xi_1},\ldots,\Dot{\xi_n},\Dot{\alpha},\Dot{\eta_2},\ldots,\Dot{\eta_n},\Dot{\tau}-\Dot{\xi_1}\alpha-\xi_1\Dot{\alpha})\\
    &=\sum_{j=1}^n\Dot{\xi_j}X_j+\Dot{\alpha}Y_1+\sum_{2=1}^n\Dot{\eta_j}Y_j+\left(\Dot{\tau}-2\alpha\Dot{\xi_1}-\sum_{j=2}^n\eta_j\Dot{\xi_j}+\sum_{j=2}^n\xi_j\Dot{\eta_j}\right)T.
\end{split}
\end{equation*}
Therefore $\Dot{\Gamma}$ admits a $C^1$ extension to the whole $\hhh TS$ if and only if 
\begin{equation}\label{uno}
    \Dot{\tau}=2\alpha\Dot{\xi_1}+\sum_{j=2}^n\eta_j\Dot{\xi_j}-\sum_{j=2}^n\xi_j\Dot{\eta_j},
\end{equation}
that is if and only if $\gamma$ is horizontal in $(\Om,d_\varphi)$.
Let us denote such an extension by $
A=\sum_{j=1}^{2n}\psi_j Z_j$. This means that $A\in C^1(S,\hhh TS)$ and \begin{equation*}
    \psi_{j}(\Gamma(s))=\Dot{\Gamma}_j(s)
\end{equation*}
for any $s\in I$ and any $j=1,\ldots,2n$.
Thanks to the aforementioned properties of $\nabla^ S$ and recalling \eqref{phflat}, then 
\begin{equation*}
\begin{split}
  \nabla^S_{\Dot{\Gamma}}\Dot{\Gamma}\big|_{\Gamma(s)}&=\nabla_{\Dot{\Gamma}}\Dot{\Gamma}\big|_{\Gamma(s)}-\left\langle\nabla_{\Dot{\Gamma}}\Dot{\Gamma}\big|_{\Gamma(s)},\vh\big|_{\Gamma(s)}\right\rangle\vh\big|_{\Gamma(s)}\\
  &=\sum_{j=1}^{2n}\langle\Dot{\Gamma}(s),\nabla_H\psi_j(\Gamma(s))\rangle Z_j\big|_{\Gamma(s)}-\left(\sum_{k=1}^{2n}\langle\Dot{\Gamma}(s),\nabla_H\psi_j(\Gamma(s))\rangle\vh_k\big|_{\Gamma(s)}\right)\vh\big|_{\Gamma(s)}\\
  &=\sum_{j=1}^{2n}\Ddot{\Gamma}_j(s)Z_j\big|_{\Gamma(s)}-\left(\sum_{k=1}^{2n}\Ddot{\Gamma}_k(s)\vh_k\big|_{\Gamma(s)}\right)\vh\big|_{\Gamma(s)}
\end{split}
\end{equation*}
for any $s\in I$. Hence
$\nabla^S_{\Dot{\Gamma}}\Dot{\Gamma}=0$ if and only if
    \begin{equation}\label{sistGamma}
        \Ddot{\Gamma}_j-\vh_j\langle\Ddot{\Gamma},\vh\rangle=0
    \end{equation}
    for any $j=1,\ldots,2n$.
We need to traduce \eqref{sistGamma} in terms of $\gamma$. To this aim, recalling \eqref{uno}, notice that
\begin{equation*}
    \Ddot{\Gamma}=\sum_{j=1}^n\Ddot{\xi_j}X_j+\Ddot{\alpha}Y_1+\sum_{2=1}^n\Ddot{\eta_j}Y_j.
\end{equation*}

\begin{lemma}\label{aux1}
It holds that
\begin{equation*}
      \langle\Ddot{\Gamma},\vh\rangle=-W^{-\frac{1}{2}}\left(2\tilde T\varphi\Dot\alpha\Dot\xi_1+\langle D^2\varphi\Dot\gamma,\Dot\gamma\rangle\right).
\end{equation*}
\end{lemma}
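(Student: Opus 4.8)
The plan is to compute both sides directly in the global orthonormal frame $X_1,\dots,X_n,Y_1,\dots,Y_n$. Since $\Ddot\Gamma$ and $\vh$ are both horizontal and written in this orthonormal frame, the pairing $\langle\Ddot\Gamma,\vh\rangle$ is simply the sum of the products of their components. Recalling that
\begin{equation*}
    \Ddot{\Gamma}=\sum_{j=1}^n\Ddot{\xi_j}X_j+\Ddot{\alpha}Y_1+\sum_{j=2}^n\Ddot{\eta_j}Y_j
\end{equation*}
and substituting the expression \eqref{normygraph} for $\vh$, the only contribution of $\Ddot\alpha$ comes from the $Y_1$-slot, whose coefficient in $\vh$ is $-W^{-\frac12}$. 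Thus, with $\varphi$ and its derivatives evaluated along $\gamma$ (which is legitimate since $\varphi\in C^2(\Om)$ by \Cref{fireg}),
\begin{equation*}
\langle\Ddot\Gamma,\vh\rangle = W^{-\frac{1}{2}}\left((W^\varphi\varphi)\Ddot\xi_1 + \sum_{j=2}^n(\tilde X_j\varphi)\Ddot\xi_j + \sum_{j=2}^n(\tilde Y_j\varphi)\Ddot\eta_j - \Ddot\alpha\right).
\end{equation*}

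Next I would differentiate $\alpha=\varphi\circ\gamma$ twice. The chain rule gives $\Dot\alpha=\langle D\varphi,\Dot\gamma\rangle$, and substituting the horizontality constraint \eqref{uno} for $\Dot\tau$ regroups this, using $\tilde T\varphi=\varphi_\tau$, into
\begin{equation*}
\Dot\alpha = (W^\varphi\varphi)\Dot\xi_1 + \sum_{j=2}^n(\tilde X_j\varphi)\Dot\xi_j + \sum_{j=2}^n(\tilde Y_j\varphi)\Dot\eta_j,
\end{equation*}
which is of course consistent with $\Dot\Gamma\in\hhh TS$. For the second derivative I write $\Ddot\alpha=\langle D^2\varphi\,\Dot\gamma,\Dot\gamma\rangle+\langle D\varphi,\Ddot\gamma\rangle$ and eliminate $\Ddot\tau$ by differentiating \eqref{uno}; the antisymmetric terms $\sum_{j=2}^n(\Dot\eta_j\Dot\xi_j-\Dot\xi_j\Dot\eta_j)$ cancel, leaving $\Ddot\tau = 2\Dot\alpha\Dot\xi_1+2\alpha\Ddot\xi_1+\sum_{j=2}^n\eta_j\Ddot\xi_j-\sum_{j=2}^n\xi_j\Ddot\eta_j$. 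Regrouping exactly as before then gives
\begin{equation*}
\Ddot\alpha = \langle D^2\varphi\,\Dot\gamma,\Dot\gamma\rangle + 2(\tilde T\varphi)\Dot\alpha\Dot\xi_1 + (W^\varphi\varphi)\Ddot\xi_1 + \sum_{j=2}^n(\tilde X_j\varphi)\Ddot\xi_j + \sum_{j=2}^n(\tilde Y_j\varphi)\Ddot\eta_j.
\end{equation*}

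Finally I substitute this into the formula for $\langle\Ddot\Gamma,\vh\rangle$. The decisive point is that the three first-order operators $W^\varphi\varphi$, $\tilde X_j\varphi$, $\tilde Y_j\varphi$ appearing as the coefficients of $\vh$ in the first display are precisely the ones produced by the $\tau$-constraint inside $\Ddot\alpha$; consequently all the second-order terms $\Ddot\xi_1,\Ddot\xi_j,\Ddot\eta_j$ cancel identically, and one is left with exactly
\begin{equation*}
\langle\Ddot\Gamma,\vh\rangle = -W^{-\frac12}\left(2(\tilde T\varphi)\Dot\alpha\Dot\xi_1 + \langle D^2\varphi\,\Dot\gamma,\Dot\gamma\rangle\right).
\end{equation*}
The computation is entirely elementary; the only genuine care required—and the one place an error could creep in—is the bookkeeping of the horizontality constraint \eqref{uno} and its derivative, since it is this constraint that forces the coefficients in $\Ddot\alpha$ to match those of $\vh$ and thereby triggers the cancellation of the (a priori troublesome) second-order terms.
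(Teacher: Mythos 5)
Your proposal is correct and follows essentially the same route as the paper's proof: both start from the componentwise expression $W^{\frac12}\langle\Ddot\Gamma,\vh\rangle=W^\varphi\varphi\,\Ddot\xi_1+\sum_{j=2}^n\tilde X_j\varphi\,\Ddot\xi_j+\sum_{j=2}^n\tilde Y_j\varphi\,\Ddot\eta_j-\Ddot\alpha$, differentiate the horizontality constraint \eqref{uno} to obtain \eqref{tauduep}, and observe that the second-order terms cancel, leaving only $-2\tilde T\varphi\,\Dot\alpha\Dot\xi_1-\langle D^2\varphi\Dot\gamma,\Dot\gamma\rangle$. The only difference is cosmetic bookkeeping: you absorb the constraint into $\Ddot\alpha$ before cancelling, whereas the paper expands $W^\varphi\varphi$, $\tilde X_j\varphi$, $\tilde Y_j\varphi$ into partial derivatives and cancels there.
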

\begin{proof}
Notice that
\begin{equation}\label{alfacomp}
    \Dot\alpha(s)=\langle\Dot\gamma,D\varphi(\gamma(s))\rangle\qquad\text{and}\qquad\Ddot\alpha(s)=\langle\Ddot\gamma(s),D\varphi(\gamma(s))\rangle+\langle D^2\varphi(\gamma(s))\Dot\gamma(s),\Dot\gamma(s)\rangle
\end{equation}
for any $s\in I$. Moreover, taking derivatives in \eqref{uno}, we see that
\begin{equation}\label{tauduep}
    \Ddot{\tau}=2\Dot{\alpha}\Dot{\xi}_1+2\alpha\Ddot{\xi}_1+\sum_{j=2}^n\eta_j\Ddot{\xi}_j-\sum_{j=2}^n\xi_j\Ddot{\eta}_j.
\end{equation}
    Exploiting \eqref{normygraph}, \eqref{alfacomp} and \eqref{tauduep}, we see that
    \begin{equation*}
        \begin{split}
W^{\frac{1}{2}}\langle\Ddot{\Gamma},\vh\rangle&=W^\varphi\varphi\Ddot\xi_1+\sum_{j=2}^{n}\tilde X_j\varphi\Ddot\xi_j+\sum_{j=2}^n\tilde Y_j\varphi\Ddot{\eta}_j-\Ddot\alpha\\
&=\Ddot\xi_1\varphi_{\xi_1}+2\Ddot\xi_1\alpha\varphi_\tau+\sum_{j=2}^n\Ddot\xi_j\varphi_{\xi_j}+\sum_{j=2}^n\eta_j\Ddot\xi_j\varphi_{\tau}+\sum_{j=2}^n\Ddot\eta_j\varphi_{\eta_j}-\sum_{j=2}^n\xi_j\Ddot\eta_j\varphi_{\tau}\\
&\quad-\Ddot\xi_1\varphi_{\xi_1}-\sum_{j=2}^n\Ddot\xi_j\varphi_{\xi_j}-\sum_{j=2}^n\Ddot\eta_j\varphi_{\eta_j}-\Ddot\tau\varphi_{\tau}-\langle D^2\varphi\Dot\gamma,\Dot\gamma\rangle\\
&=\tilde T\varphi\left(2\alpha\Ddot\xi_1+\sum_{j=2}^n\eta_j\Ddot\xi_j-\sum_{j=2}^n\Ddot\eta_j\xi_j-\Ddot\tau\right)-\langle D^2\varphi\Dot\gamma,\Dot\gamma\rangle\\
&=-2\tilde T\varphi\Dot\alpha\Dot\xi_1-\langle D^2\varphi\Dot\gamma,\Dot\gamma\rangle.
        \end{split}
    \end{equation*}
\end{proof}
In the following, we let $M=2\tilde T\varphi\Dot\alpha\Dot\xi_1+\langle D^2\varphi\Dot\gamma,\Dot\gamma\rangle$. Notice that, by \Cref{aux1}, the term $\langle\Ddot\Gamma,\vh\rangle$ does not involve second derivatives of $\gamma$. Therefore \eqref{geoS} is equivalent to the following differential problem.
\begin{equation}\label{geoom}
    \left\{
\begin{aligned}
&\Ddot\xi_1+W^{-1}W^\varphi\varphi M=0&&\text{on }I,\qquad\xi_1(0)=\xi_1^0,\qquad\Dot\xi_1(0)=w_1\\
&\Ddot\xi_j+W^{-1}\tilde X_j\varphi M=0&&\text{on }I,\qquad\xi_j(0)=\xi_j^0,\qquad\Dot\xi_j(0)=w_j\qquad\text{$j=2,\ldots,n$}\\
&\Ddot\alpha-W^{-1}M=0&&\text{on }I,\qquad\alpha(0)=y_1,\qquad\Dot\alpha(0)=w_{n+1}\\
&\Ddot\eta_j+W^{-1}\tilde Y_j\varphi M=0&&\text{on }I,\qquad\eta_j(0)=\eta_j^0,\qquad\Dot\eta_j(0)=w_{n+j}\qquad\text{$j=2,\ldots,n$}\\
&\Dot{\tau}=2\alpha\Dot{\xi_1}+\sum_{j=2}^n\eta_j\Dot{\xi_j}-\sum_{j=2}^n\xi_j\Dot{\eta_j}&&\text{on }I,\qquad \tau(0)=t+\xi_1^0\varphi(q)
\end{aligned}
\right.
\end{equation}
A key step consist in showing that the third line of \eqref{geoom} is redundant.
\begin{lemma}\label{auxmain}
    A curve $\gamma\in C^2(I,\Om)$ solves \eqref{geoom} if and only if it solves the following differential system.
    \begin{equation}\label{geoomrid}
    \left\{
\begin{aligned}
&\Ddot\xi_1+W^{-1}W^\varphi\varphi M=0&&\text{on }I,\qquad\xi_1(0)=\xi_1^0,\qquad\Dot\xi_1(0)=w_1\\
&\Ddot\xi_j+W^{-1}\tilde X_j\varphi M=0&&\text{on }I,\qquad\xi_j(0)=\xi_j^0,\qquad\Dot\xi_j(0)=w_j\qquad\text{$j=2,\ldots,n$}\\
&\Ddot\eta_j+W^{-1}\tilde Y_j\varphi M=0&&\text{on }I,\qquad\eta_j(0)=\eta_j^0,\qquad\Dot\eta_j(0)=w_{n+j}\qquad\text{$j=2,\ldots,n$}\\
&\Dot{\tau}=2\alpha\Dot{\xi_1}+\sum_{j=2}^n\eta_j\Dot{\xi_j}-\sum_{j=2}^n\xi_j\Dot{\eta_j}&&\text{on }I,\qquad \tau(0)=t+\xi_1^0\varphi(q)
\end{aligned}
\right.
\end{equation}
\end{lemma}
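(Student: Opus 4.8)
The plan is to exploit that the two systems differ only in the third line of \eqref{geoom}, namely the equation $\Ddot\alpha-W^{-1}M=0$ together with its two initial conditions, and to show that this line is forced by the remaining ones. The implication from \eqref{geoom} to \eqref{geoomrid} is immediate, as \eqref{geoomrid} is obtained by simply deleting that line. For the converse I would start from the conceptual reason behind the redundancy: the system \eqref{sistGamma}, from which \eqref{geoom} was derived, asserts that the horizontal vector $\Ddot\Gamma$ equals $\langle\Ddot\Gamma,\vh\rangle\vh$, i.e. that the $\hhh TS$-component of $\Ddot\Gamma$ vanishes. This amounts to only $2n-1$ independent scalar conditions written redundantly as $2n$ equations, and the slot in the $Y_1$-direction is precisely the redundant one.

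To make this precise, let $\gamma$ solve \eqref{geoomrid} and lift it to $\Gamma=\Psi\circ\gamma$. The last line of \eqref{geoomrid} is the horizontality condition \eqref{uno}, so $\Ddot\Gamma=\sum_{j=1}^n\Ddot\xi_j X_j+\Ddot\alpha Y_1+\sum_{j=2}^n\Ddot\eta_j Y_j$ carries no $T$-component, and \Cref{aux1} gives $\langle\Ddot\Gamma,\vh\rangle=-W^{-\frac{1}{2}}M$. I would then introduce $V:=\Ddot\Gamma-\langle\Ddot\Gamma,\vh\rangle\vh$, the orthogonal projection of $\Ddot\Gamma$ onto $\hhh TS$, which satisfies $\langle V,\vh\rangle=0$. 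Reading off its coefficients in the basis $Z_1,\dots,Z_{2n}$ by means of \eqref{normygraph}, the first three lines of \eqref{geoomrid} say exactly that $V_k=0$ for every $k\in\{1,\dots,n\}\cup\{n+2,\dots,2n\}$, that is for every index except $k=n+1$. Hence $V=V_{n+1}Y_1$, and pairing with $\vh$ gives $0=\langle V,\vh\rangle=V_{n+1}\vh_{n+1}=-W^{-\frac{1}{2}}V_{n+1}$. Since $W\ge 1$, this forces $V_{n+1}=0$; but $V_{n+1}=\Ddot\alpha-W^{-1}M$, which is precisely the ODE in the third line of \eqref{geoom}.

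It remains to recover the two initial conditions of that line. The equality $\alpha(0)=\varphi(\gamma(0))=\varphi(q)=y_1$ holds automatically from $\alpha=\varphi\circ\gamma$ and $\Gamma(0)=p$. For the velocity I would again use that $Y_1$ is transverse to $\hhh TS$: both $\Dot\Gamma(0)$, as the velocity of a horizontal curve in $S$, and $w$ lie in $\hhh T_pS$, while their difference equals $(\Dot\alpha(0)-w_{n+1})Y_1$; since $\langle Y_1,\vh\rangle=\vh_{n+1}\neq 0$ shows $Y_1\notin\hhh T_pS$, we conclude $\Dot\alpha(0)=w_{n+1}$. The one genuinely delicate point, and the step I expect to be the crux, is this transversality observation $\vh_{n+1}\neq 0$ (equivalently $Y_1\notin\hhh TS$), which singles out the $Y_1$-direction as the redundant one and simultaneously disposes of both the ODE and the velocity condition. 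A more computational alternative, should the transversality argument need buttressing, is to differentiate \eqref{uno} to express $\Ddot\tau$, substitute the reduced equations into the chain-rule identity $\Ddot\alpha=\langle\Ddot\gamma,D\varphi\rangle+\langle D^2\varphi\Dot\gamma,\Dot\gamma\rangle$, and simplify using $|\nabla^\varphi\varphi|^2=W-1$ to reach $\Ddot\alpha=W^{-1}M$ directly.
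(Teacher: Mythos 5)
Your proof is correct, and the main argument is organized differently from the paper's. The paper proves the converse implication by brute computation: it differentiates the horizontality constraint to get \eqref{tauduep}, substitutes the reduced second-order equations into the chain-rule identity $\Ddot\alpha=\langle\Ddot\gamma,D\varphi\rangle+\langle D^2\varphi\Dot\gamma,\Dot\gamma\rangle$, and after simplification arrives at $\Ddot\alpha-W^{-1}M=\tfrac{|\nabla^\varphi\varphi|^2}{1+|\nabla^\varphi\varphi|^2}\left(\Ddot\alpha-W^{-1}M\right)$, whence the third line of \eqref{geoom} follows since $W^{-1}\neq 0$; the initial conditions $\alpha(0)=y_1$ and $\Dot\alpha(0)=w_{n+1}$ are likewise checked by direct evaluation, the latter using \eqref{normygraph} and $w\in\hhh T_pS$. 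This is exactly the ``computational alternative'' you sketch at the end. Your primary route instead isolates the structural reason for the redundancy: the vector $V=\Ddot\Gamma-\langle\Ddot\Gamma,\vh\rangle\vh$ satisfies $\langle V,\vh\rangle=0$, its components in the frame $Z_1,\ldots,Z_{2n}$ are precisely the left-hand sides of the second-order equations (via \Cref{aux1} and \eqref{normygraph}), and once all components except the $Y_1$-one are killed by \eqref{geoomrid}, the relation $0=\langle V,\vh\rangle=V_{n+1}\vh_{n+1}$ forces $V_{n+1}=\Ddot\alpha-W^{-1}M=0$ because $\vh_{n+1}\neq 0$; the same transversality disposes of $\Dot\alpha(0)=w_{n+1}$. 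This is cleaner and explains \emph{why} the paper's algebra closes up (the identity it derives is just $\langle V,\vh\rangle=0$ unwound). One remark: the step you flag as the delicate crux, namely $\vh_{n+1}\neq 0$, is in fact immediate from \eqref{normygraph}, which gives $\vh_{n+1}=-W^{-1/2}$ with $W=1+|\nabla^\varphi\varphi|^2\geq 1$; this non-vanishing is built into the intrinsic $Y_1$-graph structure (it is the content of the cited \cite[Theorem 1.2]{MR2223801}, already used in \Cref{fireg}), so no buttressing is needed and your argument is complete as stated.
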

\begin{proof}
    If $\gamma\in C^2(I,\Om)$ solves \eqref{geoom}, then clearly solves \eqref{geoomrid}. Conversely, assume that $\gamma\in C^2(I,\Om)$ solves \eqref{geoomrid}. Since $y_1=\varphi(q)$, then $\alpha(0)=\varphi(\gamma(0))=\varphi(q)=y_1$. Moreover, notice that
    \begin{equation*}
        \begin{split}
            \Dot\alpha(0)&=\langle\Dot\gamma(0),D\varphi(q)\rangle\\
            &=\Dot\xi_1(0)\varphi_{\xi_1}(q)+\sum_{j=2}^n\Dot\xi_j(0)\varphi_{\xi_j}(q)+\sum_{j=2}^n\Dot\eta_j(0)\varphi_{\eta_j}(q)+\Dot\tau(0)\varphi_\tau(q)\\
            &=w_1W^\varphi\varphi(q)+\sum_{j=2}^nw_j\tilde X_j\varphi(q)+\sum_{j=2}^nw_{n+j}\tilde Y_j\varphi(q)\\
            &=w_{n+1},
        \end{split}
    \end{equation*}
    where the last equality follows from \eqref{normygraph} and the fact that $w\in \hhh T_pS$. Observe that, recalling \eqref{tauduep} and exploiting all the second-order equations in \eqref{geoomrid},
    \begin{equation*}
        \begin{split}
            \langle\Ddot\gamma,D\varphi\rangle&=\Ddot\xi_1\varphi_{\xi_1}+\sum_{j=2}^n\Ddot\xi_j\varphi_{\xi_j}+\sum_{j=2}^n\Ddot\eta_j\varphi_{\eta_j}+\Ddot\tau\Tilde T\varphi\\
            &=\Ddot\xi_1 W^\varphi\varphi+\sum_{j=2}^n\Ddot\xi_j\tilde X_j\varphi+\sum_{j=2}^n\Ddot\eta_j\tilde Y_j\varphi+2\Tilde T\varphi\Dot\alpha\Dot\xi_1\\
            &=-W^{-1}M|\nabla^\varphi\varphi|^2+2\Tilde T\varphi\Dot\alpha\Dot\xi_1.
        \end{split}
    \end{equation*}
    Therefore, we conclude that
    \begin{equation*}
        \begin{split}
            \Ddot\alpha-W^{-1}M&=W^{-1}(W\langle\Ddot\gamma,D\varphi\rangle+W\langle D^2\varphi\Dot\gamma,\Dot\gamma\rangle-2\tilde T\varphi\Dot\alpha\Dot\xi_1-\langle D^2\varphi\Dot\gamma,\Dot\gamma\rangle)\\
            &=W^{-1}(\langle\Ddot\gamma,D\varphi\rangle+|\nabla^\varphi\varphi|^2\langle\Ddot\gamma,D\varphi\rangle+|\nabla^\varphi\varphi|^2\langle D^2\varphi\Dot\gamma,\Dot\gamma\rangle-2\tilde T\varphi\Dot\alpha\Dot\xi_1)\\
            &=W^{-1}(-W^{-1}M|\nabla^\varphi\varphi|^2+|\nabla^\varphi\varphi|^2\langle\Ddot\gamma,D\varphi\rangle+|\nabla^\varphi\varphi|^2\langle D^2\varphi\Dot\gamma,\Dot\gamma\rangle)\\
            &=\frac{|\nabla^\varphi\varphi|^2}{1+|\nabla^\varphi\varphi|^2}(\Ddot\alpha-W^{-1}M),
        \end{split}
    \end{equation*}
    which is equivalent to say that
    \begin{equation*}
        W^{-1}(\Ddot\alpha-W^{-1}M)=0.
    \end{equation*}
    Being $W^{-1}\neq 0$, the thesis follows.
\end{proof}
We can summarize the previous achievements in the following statement. 
\begin{proposition}\label{equiprop}
    The following properties hold.
    \begin{itemize}
        \item [$(i)$] If $\Gamma\in C^2(I,S)$ solves \eqref{geoS}, then $\gamma:I\longrightarrow\Om$ defined by \begin{equation*}
            \gamma(s):=\Pi(\Gamma(s))
        \end{equation*}
        for any $s\in I$ solves \eqref{geoomrid}.
        \item [$(ii)$] If $\gamma\in C^2(\Om)$ solves \eqref{geoomrid}, then $\Gamma:I\longrightarrow\Om$ defined by \begin{equation*}
            \Gamma(s):=\Psi(\gamma(s))
        \end{equation*}
        for any $s\in I$ solves \eqref{geoS}.
    \end{itemize}
\end{proposition}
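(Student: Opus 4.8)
My plan is to treat \Cref{equiprop} as the bookkeeping statement that assembles the computations preceding it together with \Cref{auxmain}. The conceptual point I would stress at the outset is that $\Psi$ and $\Pi$ are mutually inverse (as recorded in the Preliminaries, $\Pi\circ\Psi=\mathrm{id}$ and $\Psi\circ\Pi=\mathrm{id}$), so that $\gamma\mapsto\Psi\circ\gamma$ and $\Gamma\mapsto\Pi\circ\Gamma$ are inverse bijections between curves in $\Om$ and curves in $S\cap U$. Since $\Pi$ is a polynomial map and, by \Cref{fireg}, $\Psi\in C^2$, both assignments preserve $C^2$ regularity. With this in hand, in each direction I only need to check that the three ingredients of the problems, namely the horizontality constraint, the geodesic equation, and the initial data, correspond correctly under this identification.

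For $(i)$ I would start from $\Gamma\in C^2(I,S)$ solving \eqref{geoS}; after shrinking $I$ around $0$ so that $\Gamma(I)\subseteq S\cap U$, the curve $\gamma:=\Pi\circ\Gamma$ lies in $\Om$, is $C^2$, and satisfies $\Gamma=\Psi\circ\gamma$. Writing $\alpha:=\varphi\circ\gamma$, horizontality of $\Dot\Gamma$ is exactly \eqref{uno}, i.e. the last line of \eqref{geoomrid}. Next, $\nabla^S_{\Dot\Gamma}\Dot\Gamma=0$ is \eqref{sistGamma}, and substituting the expression \eqref{normygraph} for $\vh$ and the formula for $\langle\Ddot\Gamma,\vh\rangle$ from \Cref{aux1} turns \eqref{sistGamma} into the second-order equations of \eqref{geoom}. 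Thus $\gamma$ solves \eqref{geoom}, and \Cref{auxmain} upgrades this to a solution of \eqref{geoomrid}. The initial data are then matched by reading $\Gamma(0)=p=\Psi(q)$, which forces $\gamma(0)=q$, and $\Dot\Gamma(0)=w$, which, expanding $\Dot\Gamma$ in the frame $Z_1,\dots,Z_{2n}$, gives $\Dot\xi_j(0)=w_j$, $\Dot\eta_j(0)=w_{n+j}$ and $\Dot\alpha(0)=w_{n+1}$.

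For $(ii)$ I would run this backwards: given $\gamma\in C^2(I,\Om)$ solving \eqref{geoomrid}, \Cref{auxmain} first promotes it to a solution of the full system \eqref{geoom} (in particular recovering the redundant equation $\Ddot\alpha-W^{-1}M=0$). Setting $\Gamma:=\Psi\circ\gamma$, the $\tau$-equation gives horizontality of $\Dot\Gamma$, while the second-order equations together with \Cref{aux1} and \eqref{normygraph} reconstitute \eqref{sistGamma}, that is $\nabla^S_{\Dot\Gamma}\Dot\Gamma=0$; the initial conditions follow from $\Gamma(0)=\Psi(q)=p$ and the velocity identifications above.

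The only genuinely delicate point, and the reason the preparatory lemmas are needed, is the apparent overdetermination of the lift: \eqref{geoS} carries the full velocity datum $w=(w_1,\dots,w_{2n})$, whereas \eqref{geoomrid} prescribes only $\Dot\xi_j(0)$ and $\Dot\eta_j(0)$ and leaves $\Dot\alpha(0)$ free. The hard part is therefore not in \Cref{equiprop} itself but in verifying that $\Dot\alpha(0)=w_{n+1}$ is automatic, which is precisely the computation at the start of the proof of \Cref{auxmain}: since $w\in\hhh T_pS$, the expression \eqref{normygraph} for $\vh$ yields $\Dot\alpha(0)=\langle\Dot\gamma(0),D\varphi(q)\rangle=w_{n+1}$. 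Granting this, the equivalence is a direct transcription, and I expect no further obstruction.
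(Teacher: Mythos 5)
Your proposal is correct and follows essentially the same route as the paper: the proposition is an assembly of the preceding reduction of \eqref{geoS} to \eqref{geoom} (horizontality as \eqref{uno}, the geodesic equation as \eqref{sistGamma} rewritten via \eqref{normygraph} and \Cref{aux1}) combined with \Cref{auxmain}, using that $\Psi$ and $\Pi$ are mutually inverse. The paper's own proof is just a terse version of exactly this bookkeeping, including delegating the recovery of the $\alpha$-equation and of $\Dot\alpha(0)=w_{n+1}$ to \Cref{auxmain}.
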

\begin{proof}
    $(ii)$ follows thanks to \Cref{auxmain}. To prove $(i)$, notice that, if $\Gamma$ is as in the statement, then $\Gamma=\Psi(\sigma)$, where $\sigma=\Pi(\Gamma)$, and so $(i)$ easily follows.
\end{proof}
\begin{theorem}\label{exungeo}
    The initial value  problem \eqref{geoS} admits a unique local solution $\Gamma\in C^2(I,S)$.
\end{theorem}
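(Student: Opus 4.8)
The plan is to lean on the reduction already established in \Cref{equiprop}: producing a solution $\Gamma\in C^2(I,S)$ of \eqref{geoS} is equivalent, via the mutually inverse maps $\Gamma=\Psi\circ\gamma$ and $\gamma=\Pi\circ\Gamma$, to producing a solution $\gamma\in C^2(I,\Om)$ of the reduced system \eqref{geoomrid}. Since $\varphi\in C^2(\Om)$ by \Cref{fireg}, both $\Psi$ and $\Pi$ are of class $C^2$, so a unique local $\gamma$ transfers to a unique local $\Gamma$ of the required regularity. The conceptually delicate point, namely the \emph{a priori} overdetermination of the lifted problem \eqref{geoom}, has already been removed by \Cref{auxmain}, which shows that the second-order equation governing $\alpha=\varphi\circ\gamma$ is redundant. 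Hence \eqref{geoomrid} is a genuinely determined system: $2n-1$ second-order equations for $(\xi_1,\dots,\xi_n,\eta_2,\dots,\eta_n)$ together with a single first-order equation for $\tau$, and it suffices to solve it.

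First I would rewrite \eqref{geoomrid} as an autonomous first-order system for the state
\[
U=(\xi_1,\dots,\xi_n,\eta_2,\dots,\eta_n,\tau,\Dot\xi_1,\dots,\Dot\xi_n,\Dot\eta_2,\dots,\Dot\eta_n)\in\rr^{4n-1}.
\]
The essential enabling fact is \Cref{aux1}: the quantity $M=2\tilde T\varphi\,\Dot\alpha\,\Dot\xi_1+\langle D^2\varphi\,\Dot\gamma,\Dot\gamma\rangle$ carries no second derivatives of $\gamma$. Using $\Dot\alpha=\langle D\varphi,\Dot\gamma\rangle$ together with the constraint $\Dot\tau=2\alpha\Dot\xi_1+\sum_{j=2}^n\eta_j\Dot\xi_j-\sum_{j=2}^n\xi_j\Dot\eta_j$ from the last line of \eqref{geoomrid}, the full velocity $\Dot\gamma$, and therefore $M$, becomes an explicit function of $U$, quadratic in its velocity block. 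Since the coefficients $W^{-1}$, $W^\varphi\varphi$, $\tilde X_j\varphi$ and $\tilde Y_j\varphi$ depend only on the position block, the system \eqref{geoomrid} takes the autonomous first-order form $\Dot U=\mathcal F(U)$ with initial datum $U(0)=U_0$ read off from $p$, $w$ and $q$.

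The step I expect to be the real obstacle is the regularity of $\mathcal F$, on which the existence-and-uniqueness theory hinges. Because $W\ge 1$ the factor $W^{-1}$ is well defined, and $W^{-1},W^\varphi\varphi,\tilde X_j\varphi,\tilde Y_j\varphi$ are of class $C^1$ in the position variables (they involve only $\varphi$ and $D\varphi$); moreover $\mathcal F$ is polynomial, hence locally Lipschitz, in the velocity variables. The sole problematic term is $\langle D^2\varphi\,\Dot\gamma,\Dot\gamma\rangle$, whose position dependence passes through $D^2\varphi$, which under the bare hypothesis $\varphi\in C^2(\Om)$ is only continuous. Thus $\mathcal F$ is continuous and Peano's theorem already gives local existence. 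To obtain uniqueness I would estimate the difference of two solutions $U$ and $U'$ by Gronwall: every term but the critical one is dominated linearly by $|U-U'|$, while the bilinear splitting of $\langle D^2\varphi(\gamma)\,\Dot\gamma,\Dot\gamma\rangle-\langle D^2\varphi(\gamma')\,\Dot\gamma',\Dot\gamma'\rangle$ isolates a factor $D^2\varphi(\gamma)-D^2\varphi(\gamma')$ times a velocity square, the latter uniformly bounded because $\nabla^S$ is metric and hence $|\Dot\Gamma|$ is conserved along each solution. Controlling this factor by the modulus of continuity of $D^2\varphi$---Lipschitz, or more generally Osgood-type, which is the regularity genuinely needed here---closes the Gronwall inequality, and the Cauchy--Lipschitz machinery then yields a unique local $U$. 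Transferring back through $\Gamma=\Psi\circ\gamma$, with $\Psi\in C^2$, produces the unique $\Gamma\in C^2(I,S)$ solving \eqref{geoS}.
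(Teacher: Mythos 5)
Your reduction is exactly the paper's: pass to \eqref{geoomrid} via \Cref{equiprop}, double the variables to obtain a first-order system in $\rr^{4n-1}$, and invoke ODE theory; the redundancy of the $\alpha$-equation is indeed already handled by \Cref{auxmain}, and your count ($2n-1$ second-order equations plus one first-order equation, state dimension $4n-1$) matches the paper's. The only divergence is at the final step, and it is substantive. The paper asserts that, thanks to \Cref{fireg}, the right-hand side $F$ is of class $C^1$ near $(0,\tilde q)$ and concludes by the Picard--Lindel\"of theorem. You observe, correctly, that \Cref{fireg} only gives $\varphi\in C^{2}(\Om)$, so the term $\langle D^2\varphi\,\Dot\gamma,\Dot\gamma\rangle$ inside $M$ makes $F$ merely \emph{continuous} in the position block; Peano then yields existence, but your Gronwall argument for uniqueness closes only under a Lipschitz (or Osgood) modulus for $D^2\varphi$, which the hypothesis that $S$ is of class $C^2$ does not supply.

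So, as written, your proof of the uniqueness half is conditional on regularity you do not have; but this is precisely the regularity the paper tacitly assumes when it declares $F$ to be $C^1$, so you have located the one weak joint of the published argument rather than introduced a new one. Two remarks. First, existence is safe under the stated hypotheses, and existence is all that the proof of \Cref{main1} actually uses (there the identification $\Gamma(s)=p\cdot(sw,0)$ comes from $\tilde h\equiv 0$ together with \Cref{charinhn}, not from the uniqueness clause of \Cref{exungeo}). Second, to obtain uniqueness honestly one should either strengthen the regularity of $S$ (e.g.\ to $C^{2,1}_{\mathrm{loc}}$ or $C^3$, making $F$ locally Lipschitz so that Picard--Lindel\"of applies as the paper intends) or impose an Osgood condition on $D^2\varphi$ as you suggest. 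Your observation that $|\Dot\Gamma|$ is conserved because $\nabla^S$ is metric is correct and gives a useful a priori velocity bound, but it does not repair the modulus-of-continuity issue.
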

\begin{proof}
    In view of \Cref{equiprop}, it suffices to show that the initial value problem \eqref{geoomrid} admits locally a unique solution. Notice that \eqref{geoomrid} can be seen as a fist-order initial value problem by means of a standard doubling variable argument. More precisely, let us introduce the equations
\begin{equation}\label{neweq}
        \Dot\xi_1=\Xi_1,\qquad\Dot\xi_j=\Xi_j\qquad\text{and}\qquad\Dot\eta_j=H_j
    \end{equation}
    for any $j=2,\ldots,n$, let us define the curve $\tilde\Gamma:I\longrightarrow\rr^{4n-1}$ by
    \begin{equation*}
        \tilde\Gamma=(\xi_1,\ldots,\xi_n,\eta_2,\ldots,\eta_n,\tau,\Xi_1,\ldots,\Xi_n,H_2,\ldots,H_n),
    \end{equation*}
    and let $\tilde q=(x_1,\ldots,x_n,y_2,\ldots,y_n,t-x_1y_1,w_1,\ldots,w_n,w_{n+2},\ldots,w_{2n})$. Then \eqref{geoomrid} is equivalent to the first-order initial value problem
\begin{equation}\label{dptau}
    \left\{
\begin{aligned}
&\Dot{\tilde\Gamma}(s)=F(s,\Gamma(s))\quad\text{on }I\\
&\tilde\Gamma(0)=\tilde q
\end{aligned}
\right.
\end{equation}
where $F:I\times\rr^{4n-1}\longrightarrow\rr^{4n-1}$ is defined in the obvious way taking into account \eqref{geoomrid} and \eqref{neweq}. Thanks to \Cref{fireg}, $F$ is of class $C^1$ in a neighborhood of $(0,\tilde q)$. Hence the thesis follows by means of the classical Picard-Lindel\"of Theorem (cf. e.g. \cite{MR1929104}).
\end{proof}

\begin{proof}[Proof of \Cref{main1}]
    Fix $p=(\bar x,\bar y,t)\in S\setminus S_0$.
    Assume first that there exists an open neighborhood $U$ of $p$ such that $\tilde h\equiv 0$ on $U$. Fix $w\in \hhh T_p S$. As before, recalling also \Cref{fireg}, we can assume that there exists $\Om\subseteq\rr^{2n}$ and $\varphi\in C^2(\Om)$ such that 
\begin{equation*}
    S\cap V=\graf_{Y_1}(\varphi,\Om)\cap V,
\end{equation*}
where $V=i(\Om)\cdot j(\rr)$. In view of \Cref{exungeo}, there exists a small domain $I\subseteq\rr$ such that $0\in I$ and a curve $\Gamma\in C^2(I,S)$ solving \eqref{geoS} with initial data $\Gamma(0)=p$ and $\Dot\Gamma(0)=w$. Since $\tilde h\equiv 0$, and recalling \Cref{charinhn}, we conclude that $\Gamma(s):=p\cdot(sw,0)$. Hence $S$ is locally ruled at $p$. Conversely, assume that $S$ is locally ruled in a suitable neighborhood $U$ of $p$. Assume also that $U\cap S_0=\emptyset$. Fix $\bar p\in U$ and $w\in \hhh T_{\bar p}S$. Since $\Gamma(s):=\bar p\cdot(sw,0)$ lies locally in $S$, then $h_{\bar p}(w,w)=0$, and so $\tilde h_{\bar p}=0$.
\end{proof}

\begin{proof}[Proof of \Cref{mainmain}]
    The first equivalence follows from \Cref{rulcar}. If in addition $S$ is topologically closed and $n\geq 2$, arguing as in \cite[Proposition 4.1]{MR3794892} it is easy to see that the fact that $S$ is horizontally totally geodesic implies that $S_0$ is constituted by isolated points, and so it is countable. The thesis then follows by \Cref{main1}.
\end{proof}
\begin{proof}[Proof of Theorem \ref{controes}]
$S$ is clearly a smooth hypersurface. Let $p\in S\setminus S_0$. It is well-known that 
    \begin{equation*}
        N(p)=\frac{1}{\sqrt{1+|D u(z)|_{\Ru^{2n}}^2}}(D u(z),-1)=\frac{1}{\sqrt{1+x_1^2+y_1^2}}(x_1,0,\ldots,0,-y_1,0,\ldots,0,-1),
    \end{equation*}
    and so
    \begin{equation*}
        \vh(p)=\vh(z)=\frac{1}{\sqrt{2(x_1-y_1)^2+\sum_{j=2}^n(x_j^2+y_j^2)}}(x_1-y_1,-y_2,\ldots,-y_n,x_1-y_1,x_2,\ldots,x_n).
    \end{equation*}
    Since in this case $\vh$ does not depend on $t$, an easy computation shows that  
    \begin{equation}\label{diverfacile}
        \diver_{\mathbb H}\vh(p)=\diver_{\Ru^{2n}}\vh(z)=0
    \end{equation}
    for any $p\in S\setminus S_0$. Since $n\geq 2$, \eqref{diverfacile} allows us to apply \cite[Corollary F]{MR2262784} and \cite[Theorem 2.3]{MR2333095}, which, together with \cite[Example 5.29]{MR3587666}, imply that $S$ is minimal. We conclude noticing that, in view of Theorem \ref{mainmain}, $S$ is not horizontally totally geodesic.
\end{proof}

\bibliographystyle{abbrv}
\bibliography{biblio}
\end{document}